\documentclass[11pt]{amsart}
\usepackage[margin = 1in]{geometry}
\usepackage{mathptmx}
\usepackage{latexsym}
\usepackage{amsmath}
\usepackage{amssymb, mathtools}
\usepackage{amsfonts} 
\usepackage{eucal} 
\usepackage{amsbsy}
\usepackage{amsthm}
\usepackage[all]{xy}
\usepackage[pagebackref=true]{hyperref}
\usepackage{graphicx}
\usepackage{xcolor}
\usepackage{enumerate}

\newtheorem{thm}{Theorem}[section]
\newtheorem*{thm*}{Theorem}
\newtheorem{lemma}[thm]{Lemma}
\newtheorem{prop}[thm]{Proposition}

\newtheorem*{cor*}{Corollary}

\theoremstyle{definition}
\newtheorem{defn}[thm]{Definition}

\theoremstyle{remark}
\newtheorem{remark}[thm]{Remark}

\numberwithin{equation}{section}

\newcommand {\real}  {\ensuremath{\mathbb{R}}}
\newcommand {\intg}  {\ensuremath{\mathbb{Z}}}

\newcommand {\cplx}  {\ensuremath{\mathbb{C}}}
\newcommand {\rat}   {\ensuremath{\mathbb{Q}}}

\newcommand {\rk}    {\operatorname{rk}}

\newcommand {\smlhf} {\ensuremath{\mbox{$\frac{1}{2}$}}}

\newcommand {\pro}   {\ensuremath{\operatorname{pr}}}

\newcommand {\pt}    {{\ensuremath{\operatorname{pt}}}}
\newcommand {\id}    {\ensuremath{\operatorname{id}}}

\newcommand {\BSO}   {\ensuremath{\operatorname{BSO}}}

\newcommand {\MSPL}   {\ensuremath{\operatorname{MSPL}}}

\newcommand {\SO}   {{\ensuremath{\operatorname{SO}}}}

\newcommand {\Witt}   {{\ensuremath{\operatorname{Witt}}}}

\newcommand {\MWITT}   {\ensuremath{\operatorname{MWITT}}}
\newcommand {\sign}   {{\ensuremath{\operatorname{sign}}}}

\newcommand {\Th}   {\ensuremath{\operatorname{Th}}}
\newcommand {\topo}   {\ensuremath{{\operatorname{top}}}}
\newcommand {\an}   {\ensuremath{{\operatorname{an}}}}

\newcommand {\geo}    {{\ensuremath{\operatorname{geo}}}}

\newcommand {\KO}   {{\ensuremath{\operatorname{KO}}}}
\newcommand {\K}   {{\ensuremath{\operatorname{K}}}}

\newcommand {\ko}   {{\ensuremath{\operatorname{ko}}}}
\newcommand {\ku}   {{\ensuremath{\operatorname{k}}}}
\newcommand {\KK}  {{\ensuremath{\operatorname{KK}}}}

\newcommand {\ism}   {\ensuremath{\intg [\smlhf]}}

\newcommand {\Spin}   {{\ensuremath{\operatorname{Spin}}}}

\def\Di{\mathfrak{D}\kern-6.5pt/}
\def\Spi{\mathfrak{S}\kern-6.5pt/}

\usepackage{mathrsfs}
\newcommand{\bb}[1]{\mathbb{#1}}

\newcommand\lra{\longrightarrow}
\newcommand\xlra[1]{\xrightarrow{\phantom{x} #1 \phantom{x}}}

\newcommand\eps\varepsilon
\newcommand\pa\partial

\DeclareMathAlphabet{\mathpzc}{OT1}{pzc}{m}{it}

\usepackage{xcolor}
\definecolor{darkgreen}{cmyk}{1,0,1,.2}
\definecolor{m}{rgb}{1,0.1,1}

\begin{document}


\title[K-orientations and Gysin maps]
  {Smooth atlas stratified spaces, K-Homology Orientations\\ and Gysin maps. Part 2.}

\author{Pierre Albin}

\address{Department of Mathematics, University of Illinois at Urbana-Champaign, USA}

\email{palbin@illinois.edu}

\author{Markus Banagl}

\address{Institut f\"ur Mathematik, Universit\"at Heidelberg,
  Im Neuenheimer Feld 205, 69120 Heidelberg, Germany}

\email{banagl@mathi.uni-heidelberg.de}

\author{Paolo Piazza}

\address{Dipartimento di Matematica, Sapienza Universit\`a di Roma, Italy}

\email{piazza@mat.uniroma1.it}

\date{May 26, 2026}

\subjclass[2020]{55N33, 55R12, 57N80, 57R20, 55N15, 19L41, 
                 57Q20, 19K35}


\keywords{Stratified Spaces, Characteristic Classes, Orientation Classes,
Bundle Transfer, Gysin maps,
Intersection Homology, Bordism, K-Homology}


\begin{abstract}
In this Part 2 of our article we give a detailed discussion of the 
compatibility between the analytic Gysin maps we have defined  in Part 1 and the topological Gysin maps defined by the second author. 
A significant r\^ole is played by a bordism-like description of K-homology due to Jakob which is closely related to the geometric K-homology theory of Baum and Douglas. We give a self-contained proof of the equivalence of the former with the analytic K-homology theory of Kasparov. As an intermediate step towards proving our main result we use Thom's transversality theorem to describe Gysin maps compatibly with Jakob's definition of K-homology.
\end{abstract}

\maketitle

\section{Introduction}

This paper is a continuation of \cite{Part1}, which we will refer to as Part 1.
In Part 1, we studied Gysin maps (also known as wrong-way maps or transfer maps) in K-homology of Witt pseudomanifolds. To each oriented Witt pseudomanifold $M$ of dimension $n,$ we associated a class
\begin{equation*}
	\mathrm{sign}_K(M) = 2^{-\lfloor n/2 \rfloor}[D_M^{\mathrm{sign}}] \in \K^{\mathrm{an}}_n(M)[\tfrac12],
\end{equation*}
where $[D_M^{\mathrm{sign}}]$ is the class of the signature operator (of an appropriate `wedge metric'), and we showed that this assignment is an orientation of $M$ in K-homology with $2$ inverted.
(As noted in {\em loc cit}, one reason we use the signature operator of an oriented Witt space instead of the Dirac operator of a $\mathrm{Spin}^c$ stratified space is that the former is guaranteed to be Fredholm for any choice of wedge metric while the latter is not; in order for the signature operator to induce an orientation we then need to invert $2.$) Among other results, we showed that whenever $M$ and $N$ are oriented Witt spaces and 
\begin{equation*}
	f: M \lra N
\end{equation*}
is either a fiber bundle map (where we do allow for the fibers to be Witt), or a normally nonsingular inclusion then there is a Gysin map
\begin{equation*}
	f^!_{\an}: \K^{\an}_*(N)[\tfrac12] \lra \K^{\an}_*(M)[\tfrac12]
\end{equation*}
which maps the K-orientation of $N$ to that of $M$, viz.
\begin{equation*}
f^!_{\an}  \mathrm{sign}_K(N)=\mathrm{sign}_K(M).
\end{equation*}
We also showed that $\mathrm{sign}_K(M)$ is compatible with the Sullivan-Siegel orientation of $M,$ $\Delta(M) \in \KO^{\topo}_n(M)[\tfrac12].$ Specifically we showed that if $c:\KO \to \K$ denotes complexification and $\Psi^2: \KO [\tfrac12] \to \KO [\tfrac12]$ the stable second Adams operation, then under the natural isomorphism $\Phi$ between $\K^\topo_n (M)$ and $\K^\an_n (M)$ the class
\begin{equation*}
	c(\Psi^2)^{-1} \Delta (M) \in \K^\topo_n (M)[\smlhf]
\end{equation*}
corresponds to the class $\mathrm{sign}_K(M).$
 
In the present Part 2 we show that the analytic Gysin maps we defined in Part 1 are compatible with topologically defined Gysin maps. Namely we show that the map
\begin{equation*}
	\lambda = \Phi \circ c\circ (\Psi^2)^{-1}: \KO_* ^{{\rm top}}(M)[\tfrac12] \lra \K^\an_*(M)[\tfrac12]
\end{equation*}
intertwines the topological and analytic Gysin maps, 
\begin{equation*}
	\lambda\circ f^!_\topo = f^!_\an \circ \lambda
\end{equation*}
if $f$ is a normally non-singular inclusion or a fiber bundle projection. 
In contrast to Part 1 we allow $f$ to be a normally nonsingular inclusion 
of arbitrary finite CW-complexes (as opposed to Witt pseudomanifolds), 
but when $f$ is a fibration we require that the associated structure group is 
compact Lie to define the topological Gysin map. 
Via the Mostow-Palais equivariant embedding theorem, this enables
sufficient control over fiberwise embeddings into Euclidean spaces over
the base. More generally, it is often possible to work with
block bundles and  replace fiberwise with blockwise embeddings.
In that setting, a topologically Gysin map
had been defined by the second author in \cite{Ban:BTLOCSS, Ban:TSSKSS}.
That map agrees with the topological Gysin maps considered here, since
locally trivial fiber bundles are in particular block bundles, and the Thom
classes correspond under the map that forgets the projection and only
remembers the blocks.

An interesting part of our analysis is the use of a bordism description of K-homology, analogous to the geometric K-homology groups of Baum-Douglas \cite{BauDou:HIT}. Thus we make use of the theory developed by Jakob in \cite{Jak:BDH} where any multiplicative cohomology theory $h^*$ on finite CW-complexes is provided with a geometric description $h'_*$ of its dual homology theory $h_*.$ 
Classes in $h_*'(X,A)$ are represented by 4-tuples $(M,u(\nu_M), x, f)$ with $M$ a compact smooth manifold with (possibly empty) boundary, $u(\nu_M)$ an $h^*$-orientation of the stable normal bundle of $M,$ $x \in h^*(M),$ and $f:(M,\pa M) \lra (X,A)$ a continuous map. Two representatives represent the same class if they are related by diffeomorphism, bordism, or vector bundle modification. Jakob shows that $h_*'$ is naturally equivalent to $h_*.$ We apply Jakob's theory to the two cohomology theories $h^*(-) = \K^*(-)$ and $h^*(-)= \K^*(-)[\tfrac12]$ and denote the corresponding $h'_*(-)$ by 
\begin{equation*}
	\K^b_* ( - ) \quad\quad \text{and}\quad\quad \K^{b/2}_* ( - ),
\end{equation*}
respectively, where $b$ stands for {\em bordism}. 
For $\K^{b/2}_*,$ representing manifolds $M$ are required only to be oriented
over ordinary integral homology, since then the Sullivan orientation $\Delta (M)$
provides the required $\K^* [\smlhf]$-orientation away from $2$.
Using the signature operator, we define a homomorphism  
$\kappa: \K^{b/2}_* ( - ) \rightarrow \K^\an_*(-)[\tfrac12],$ analogous to the Baum-Higson-Schick homomorphism 
$\mu:\K^\geo ( - )\longrightarrow \K^\an_*  ( - )$ 
in the $\Spin^c$-case
\cite{baumhigsonschick}, 
and we show that $\kappa$ is an isomorphism.
The module structure of $h'_* ( - )$ over $\K^* ( - )$ and the $\K^* ( - )$-linearity of $\kappa$ 
play a crucial role in our proof. We use the bordism-like description of $\K_*^{\topo}[\tfrac12]$
as a tool in order to interpolate between the topological and the analytical description of K-homology.
Notice that our treatment of the isomorphism $\kappa$ constitutes an alternative to the results of Baum-Higson-Schick and has, in our opinion, an independent interest.
\\

We start out in section \ref{sect.functoriality}, establishing some naturality results in analytic K-homology concerning the interplay between Gysin maps, pull-back maps, and cup products with orientations. 
Section \ref{sect-siegel-sullivan} reviews the Siegel-Sullivan orientation and its lift by the second author to spectra. In section \ref{sec:BdismDescKHom}, we study the geometric descriptions of K-homology and K-homology with $2$ inverted. Sections \ref{sec:CompHoriented} and \ref{sec:CompKoriented} show the compatibility of Gysin maps for oriented normally nonsingular inclusions in K-homology with $2$ inverted and ordinary K-homology, respectively. Finally section \ref{sec:submersive} establishes the compatibility of Gysin maps in the submersive case.

\subsection{Acknowledgements}
The authors thank the University of Heidelberg, Sapienza Universit\`a di Roma, Stanford University, and Washington University in St. Louis for hosting research visits and are happy to acknowledge interesting conversations with Daniel Grieser, Jens Kaad, Markus Land, Eric Leichtnam, Rafe Mazzeo, Richard Melrose,  Jonathan Rosenberg, J\"org Sch\"urmann, Walter van Suijlekom, Lukas Waas, and Jon Woolf.
M. B. is funded by a research grant of the
 Deutsche Forschungsgemeinschaft (DFG, German Research Foundation)
 -- Projektnummer 495696766. This research was also partially funded by INdAM,
 Istituto Nazionale di Alta Matematica.

\subsection{Notation}
Various K-theoretic groups will play a r\^ole in the present paper and
need to be distinguished notationally.
The symbols
$\K^\topo_*$, $\K^*$, $\KO^\topo_*$ and $\KO^*$
denote topological complex K-homology,
topological complex K-theory, 
topological real K-homology and topological real K-theory.
These are represented by ring spectra $\K$ and $\KO$.
The connective versions of these spectra
will be denoted by $\ku$ and $\ko$.
The geometric complex K-homology
of Baum and Douglas \cite{BauDou:HIT} will be written as
$\K^\geo_*$. The bordism-like description of 
the homology theory associated to
$\K^* ( - )[\tfrac12]$, due to Jakob,
will be denoted by $\K^{b/2}_* ( - )$ and similarly, the bordism-type description of 
the homology theory associated to
$\K^* ( - )$ will be denoted by $\K^{b}_* ( - )$; we remind the reader that $b$ 
stands for {\em bordism}.
The Kasparov K-groups of a pair $(A, B)$ of $C^*$-algebras
are denoted by $\KK^* (A, B)$.
Analytic complex K-homology groups of a (locally compact Hausdorff) 
topological space $X$ are given by the Kasparov groups
$\K^\an_* (X) = \KK^* (C_0 (X), \cplx)$.


\tableofcontents


\section{Functoriality results}\label{sect.functoriality}
A complex vector bundle $E$ over $X$ determines an element
$[E] \in \K^0 (X)$ and also an element in $\KK_0 (\mathbb{C},C(X))$ given that 
$E$ is a finitely generated projective $C(X)$-module. 
We also have a bivariant element
\[ [[E]] \in \KK_0 (C(X), C(X)),  \] 
which is obtained by looking at $E$ as a $C(X)$-bimodule, see Blackadar
\cite[p. 259]{Bla:KOA}.
According to loc. cit., we can
also express $[[E]]$ as the external Kasparov product of $[E]\in
KK_0 (\mathbb{C},C(X))$ with the class $[\Delta_X]\in \KK_0 (C(X)\otimes C(X),C(X))$
defined by the diagonal $\Delta_X: X\to X\times X$. Here we are using the product
\[ KK_0 (\mathbb{C},C(X))\otimes \KK_0 (C(X)\otimes D, C(X))\to  KK_0 (D, C(X)), \]
with $D=C(X)$.
The abelian group $\KK_0 (C(X), C(X))$ is a unital ring under
Kasparov multiplication.
The map
\[ \K^0 (X) \longrightarrow \KK_0 (C(X), C(X))  \]
sending $[E]$ to $[[ E ]]$ is a homomorphism of rings, since
\[ [E] \cup [F] = [E\otimes F] = [[E]] \otimes [[F]].  \]

\noindent
Given more generally an element $x\in \K^1 (X)$ we also have $[[x]]\in \KK^1 (C(X),C(X))$
defined as $x\otimes [\Delta_X]$.

\noindent
Via the Kasparov product
\[ \KK_0 (C(X),C(X)) \otimes \KK_* (C(X),\cplx) \stackrel{\otimes}{\longrightarrow}  
   \KK_* (C(X),\cplx), \]
the group $\K^\an_* (X) = \KK_* (C(X), \cplx)$ becomes a module over
$\KK_0 (C(X),C(X))$ and thus over $\K^0 (X)$.
If $M$ is a $\Spin^c$-manifold and $E,E'$ complex vector bundles over $M$, 
then the identity
\begin{equation} \label{equ.dspincmeeprime} 
[[E']] \otimes [D^{\Spin^c}_{M,E}] = [D^{\Spin^c}_{M,E\otimes E'}]
  \in \K^\an_* (M)   
\end{equation}
holds for the class of the twisted $\Spin^c$-Dirac operator; see 
Blackadar \cite[Lemma 24.5.3]{Bla:KOA}
with $B=\mathbb{C}$.
Similarly, if $M$ is an oriented manifold, then
\begin{equation} \label{equ.dsignmeeprime} 
[[E']] \otimes [D^\sign_{M,E}] = [D^\sign_{M,E\otimes E'}]
  \in \K^\an_* (M);  
\end{equation}
see also Rosenberg \cite[p. 8]{rosenbergncgexampandapps}.

\begin{remark}
More generally, suppose $M$ is a smooth manifold, $E_0,$ $E_1,$ and $F$ are vector bundles over $M$ and $\nabla^F$ is a connection on $F.$
For every differential operator $D \in \mathrm{Diff}^1(M;E_0, E_1)$ there is a unique differential operator
\begin{equation*}
	D_F \in \mathrm{Diff}^1(M;E_0\otimes F, E_1\otimes F)
\end{equation*}
with the property that $D_F(s\otimes \tau)(x) = (Ds \otimes \tau)(x)$ whenever $s$ is a section of $E_0$ and $\tau$ is a section of $F$ satisfying $\nabla^F\tau=0.$
If $D$ is elliptic then both $D$ and $D_F$ define a class in $\mathrm{KK}( C_0(M), \bb C)$ and these are related by
\begin{equation*}
	[D_F] = [[F]] \otimes [D].
\end{equation*}
(The construction of $D_F$ requires a connection on $F$ but the resulting $\mathrm{KK}$-class is independent of the choice.)
\end{remark}

\begin{prop} \label{prop.kaspmultcapnaturality}
Let $f: M \to N$ be a continuous map between compact smooth manifolds.
Let $E$ be a complex vector bundle over $N$. Then the naturality relation
\[ f_* ([[f^* E]] \otimes \alpha) = [[E]] \otimes f_* \alpha 
    \in \K^\an_* (N) \]
holds for any $\alpha \in \K^\an_* (M)$.
\end{prop}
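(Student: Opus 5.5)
This is a projection formula, and the plan is to reduce it to associativity of the Kasparov product together with one identity between bivariant classes. Write $f^*: C(N)\to C(M)$ for the $*$-homomorphism $\phi\mapsto \phi\circ f$, and $[f^*]\in \KK_0(C(N),C(M))$ for its class. Then $f_*\alpha = [f^*]\otimes_{C(M)} \alpha$ for $\alpha\in \KK_*(C(M),\cplx)$. Both sides of the claimed identity are Kasparov products:
\[
f_*([[f^*E]]\otimes\alpha) = [f^*]\otimes_{C(M)}[[f^*E]]\otimes_{C(M)}\alpha,\qquad
[[E]]\otimes f_*\alpha = [[E]]\otimes_{C(N)}[f^*]\otimes_{C(M)}\alpha .
\]
By associativity of the Kasparov product, it suffices to prove the identity
\[
[f^*]\otimes_{C(M)}[[f^*E]] = [[E]]\otimes_{C(N)}[f^*] \in \KK_0(C(N),C(M)).
\]

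Next I compute both sides of this identity as explicit bimodules with zero operator. The class $[[E]]$ is represented by $(\Gamma(N,E),0)$, viewed as a $C(N)$–$C(N)$ bimodule with the left action by pointwise multiplication. This is a legitimate Kasparov module because $\Gamma(N,E)$ is finitely generated projective, so compact operators and bounded operators coincide on it. Products of Kasparov modules with zero operator, where the modules are finitely generated projective, are just interior tensor products of modules with zero operator.

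The left side is therefore represented by $C(M)\otimes_{C(M)}\Gamma(M,f^*E)\cong\Gamma(M,f^*E)$, with $C(N)$ acting through $f^*$. The right side is $\Gamma(N,E)\otimes_{C(N)}C(M)$, again with $C(N)$ acting on the left through multiplication. The standard isomorphism
\[
\Gamma(N,E)\otimes_{C(N)}C(M)\cong \Gamma(M,f^*E),\qquad s\otimes\phi\mapsto \phi\cdot(s\circ f),
\]
is an isomorphism of right Hilbert $C(M)$-modules. It also intertwines the left $C(N)$-actions, since $(\psi s)\circ f = (\psi\circ f)(s\circ f)$. Hence the two Kasparov modules are unitarily equivalent and the classes agree.

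The main point needing care is exactly this isomorphism $\Gamma(N,E)\otimes_{C(N)}C(M)\cong\Gamma(M,f^*E)$, together with checking that it preserves the inner products. I would reduce to the case where $E$ is a direct summand of a trivial bundle $N\times\cplx^k$, given by a projection $p\in M_k(C(N))$. Then both modules are $(p\circ f)\,C(M)^k$, and the isomorphism becomes tautological. If one prefers the description of $[[E]]$ as $[E]\otimes[\Delta_N]$ used above, the same argument runs through naturality of the diagonal, $(f\times f)\circ\Delta_M=\Delta_N\circ f$, and of external products. The module-level argument, however, is more direct.
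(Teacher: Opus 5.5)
Your proof is correct, but it takes a different route from the paper.

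\textbf{The paper's route.} The paper works in the Connes--Skandalis correspondence picture. It deduces from $[[E]]=[E]\otimes[\Delta_N]$, via the external product of correspondences, that $[[E]]$ is represented by $N \leftarrow (N,E) \rightarrow N$ with both maps the identity. It represents $\alpha$ by some correspondence $M \xleftarrow{f_M} (Z,E_Z) \rightarrow \pt$ and $[f^\sharp]$ by $N \xleftarrow{f} (M,1) \rightarrow M$. It then composes correspondences by fibered products. Both $[[E]]\otimes f_*\alpha$ and $f_*([[f^*E]]\otimes\alpha)$ come out as the same correspondence $N \xleftarrow{f\circ f_M} (Z, f_M^*f^*E\otimes E_Z) \rightarrow \pt$.

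\textbf{Your route.} You use associativity of the Kasparov product to reduce everything to the single bivariant identity $[f^\sharp]\otimes_{C(M)}[[f^*E]]=[[E]]\otimes_{C(N)}[f^\sharp]$ in $\KK_0(C(N),C(M))$. You verify this directly by a unitary equivalence of Hilbert modules with zero operator. The supporting facts you invoke are standard: Kasparov product with the class of a $*$-homomorphism is pullback or pushforward, and the product of zero-operator cycles on finitely generated projective modules is just the interior tensor product with zero operator.

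\textbf{What each buys.} Your argument is more elementary and more general.
\begin{itemize}
\item It never uses the theorem that every class in $\KK_*(C(M),\cplx)$ is represented by a correspondence.
\item It does not need the smooth structures on $M$ and $N$ that the correspondence picture requires. So it works for arbitrary compact metrizable spaces and continuous maps.
\item Because it establishes an identity already at the level of $\KK_0(C(N),C(M))$, it yields the projection formula for $\alpha\in\KK_*(C(M),D)$ with any coefficient algebra $D$.
\end{itemize}
The paper's choice buys uniformity. The same correspondence calculus, with Hilsum's oriented correspondences and $2$ inverted, is reused for Lemma~\ref{lem.gysinandcap-bis} and Proposition~\ref{prop-boardman}. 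There the signature classes and Gysin elements $\Sigma(j)$ are most naturally handled as correspondences rather than as explicit Kasparov modules.
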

\begin{proof}
We shall use the correspondence picture of $\KK$-theory
as in Connes-Skandalis \cite[Section III]{ConSka:TLPF}, \cite{ConSka:LITF}
Recall that if $X,Y$ are locally compact spaces with $Y$ a smooth manifold,
then elements of $\KK (X,Y)$ are represented by correspondences
\[ \xymatrix@C=15pt@R=15pt{
& (Z,E_Z) \ar[dl]_{f_X} \ar[dr]^{g_Y} & \\
X & & Y,
} \]
where $Z$ is a smooth manifold, $E_Z$ is a complex vector bundle over $Z$,
$f_X$ is continuous and proper, and $g_Y$ is continuous and $\K$-oriented. More
precisely, a correspondence as above
defines naturally the element $f_* ([[E]]\otimes_Z (g_Y)!)$, where we recall that  $(g_Y)!\in \KK(Z,Y)$
and moreover every element in $\KK (X,Y)$ arises in this way.
The Kasparov multiplication of two correspondences
\[ \xymatrix@C=15pt@R=15pt{
& (Z_1,E_1) \ar[dl]_{f_X} \ar[dr]^{g_M} & \\
X & & M
} 
\text{ and }
\xymatrix@C=15pt@R=15pt{
& (Z_2,E_2) \ar[dl]_{f_M} \ar[dr]^{g_Y} & \\
M & & Y
} \]
is given by the diagram
\[ \xymatrix{
& & (Z_1 \times_M Z_2, \pro^*_1 E_1 \otimes \pro^*_2 E_2) 
  \ar[ld]_{\pro_1} \ar[rd]^{\pro_2} & & \\
& (Z_1,E_1) \ar[ld]_{f_X} \ar[rd]^{g_M} & & (Z_2,E_2) \ar[ld]_{f_M} \ar[rd]^{g_Y} & \\
X & & M & & Y,
} \]
where one assumes without loss of generality that
$g_M$ and $f_M$ are transverse to each other.

The element $[E] \in \K^0 (N) = \KK (\cplx, C(N))$ is represented
by the correspondence
\[ \xymatrix@C=15pt@R=15pt{
& (N,E) \ar[dl] \ar@{=}[dr] & \\
\pt & & N.
} \]
Here we are using that $N$ is a smooth manifold.
Note that the identity map is canonically $\K$-oriented.
We claim that the bivariant element $[[E]] \in \KK (N,N)$ is represented
by the correspondence
\begin{equation} \label{equ.brbrecorr}
\xymatrix@C=15pt@R=15pt{
& (N,E) \ar@{=}[dl] \ar@{=}[dr] & \\
N & & N.
} \end{equation}
This can for example be formally deduced from the identity
\[ [[E]] = [E] \otimes [\Delta_N]  \] 
(Blackadar \cite[p. 259]{Bla:KOA}), where
$[\Delta_N] \in \KK (C(N)\otimes C(N), C(N))$ is the class
defined by the diagonal $\Delta_N: N \to N\times N$, and
$\otimes$ is the external Kasparov product
\[ \otimes: \KK_0 (\cplx,C(N))\otimes \KK_0 (C(N)\otimes D, C(N))\to  KK_0 (D, C(N)) \]
with $D=C(N)$.
Indeed, in general the external Kasparov product of two correspondences
\[ \xymatrix@C=15pt@R=15pt{
& (Z_1,E_1) \ar[dl]_{f_1} \ar[dr]^{g_1} & \\
V & & W \times M
} 
\text{ and }
\xymatrix@C=15pt@R=15pt{
& (Z_2,E_2) \ar[dl]_{f_2} \ar[dr]^{g_2} & \\
M \times V' & & W'
} \]
is given by the diagram
\[ \xymatrix@C=10pt{
& & (Z, E_Z) 
  \ar[ld]_{\pro_1} \ar[rd]^{\pro_2} & & \\
& (Z_1 \times V', \pi^*_1 E_1) \ar[ld]_{f_1 \times \id} \ar[rd]^{g_1 \times \id} 
  & & (W \times Z_2, \pi^*_2 E_2) \ar[ld]_{\id \times f_2} \ar[rd]^{\id \times g_2} & \\
V\times V' & & W\times M \times V' & & W\times W'
} \]
in the transverse situation, with
$Z := (Z_1 \times V') \times_{W\times M\times V'} (W \times Z_2),$
$E_Z := \pro^*_1 \pi^*_1 E_1 \otimes \pro^*_2 \pi^*_2 E_2$,
$\pi_1: Z_1 \times V' \to Z_1,$ $\pi_2: W\times Z_2 \to Z_2$.
Applying this with
$V := \pt$,
$V' := N$,
$W := \pt$,
$W' := N$,
$M := N$,
$Z_1 := N$,
$E_1 := E$,
$Z_2 := N$ and
$E_2 := 1$,
to the correspondences
\[ \xymatrix@C=15pt@R=15pt{
& (N,E) \ar[dl] \ar[dr]^{\id} & \\
\pt & & \pt \times N
} 
\text{ and }
\xymatrix@C=15pt@R=15pt{
& (N,1) \ar[dl]_{\Delta_N} \ar[dr]^{\id} & \\
N \times N & & N
} \]
yields
\[ \xymatrix@C=10pt{
& & (N, E_N) 
  \ar[ld]_{\Delta_N} \ar[rd]^{\id} & & \\
& (N \times N, \pi^*_1 E) \ar[ld]_{\pro_2} \ar[rd]^{\id_{N\times N}} 
  & & (\pt \times N, \pi^*_2 1) \ar[ld]_{\Delta_N} \ar[rd]^{\id} & \\
\pt \times N & & \pt \times N \times N & & \pt \times N
} \]
with
\[ E_N = \Delta_N^* \pi^*_1 E \otimes \id^*_2 \pi^*_2 1 = E \]
($\pi_1: N \times N \to N,$ $\pi_2: \pt \times N \to N$).
Since $\pro_2 \circ \Delta_N = \id_N$, this is nothing but
(\ref{equ.brbrecorr}), as claimed.
The given element $\alpha \in \KK (M,\pt)$ is represented by a
correspondence
\[ \xymatrix@C=15pt@R=15pt{
& (Z,E_Z) \ar[dl]_{f_M} \ar[dr] & \\
M & & \pt.
} \]
The map $f:M\to N$ induces a map $f^\sharp: C(N) \to C(M)$, which
determines an element 
$[f^\sharp] \in \KK (C(N),C(M))$.
This element is represented by the correspondence
\[ \xymatrix@C=15pt@R=15pt{
& (M,1) \ar[dl]_{f} \ar@{=}[dr] & \\
N & & M.
} \]
(Note that $f$ is indeed proper.)
By definition,
\[ f_* = [f^\sharp] \otimes -: \KK (M,\pt) \longrightarrow \KK (N,\pt).  \]
In terms of correspondences, this implies that 
$f_* (\alpha) \in \KK (N,\pt)$ is represented by
\[ \xymatrix{
& & (M \times_M Z, f^*_M 1 \otimes \id^* E_Z) 
  \ar[ld]_{f_M} \ar[rd]^{\id} & & \\
& (M,1) \ar[ld]_{f} \ar[rd]^{\id} & & (Z,E_Z) \ar[ld]_{f_M} \ar[rd] & \\
N & & M & & \pt,
} \]
that is, by
\[ \xymatrix@C=15pt@R=15pt{
& (Z,E_Z) \ar[dl]_{f\circ f_M} \ar[dr] & \\
N & & \pt.
} \]
Consequently, $[[E]] \otimes f_* (\alpha)$ is represented by
\begin{equation} \label{equ.brbreotimesfstalpha}
\xymatrix{
& & (Z, (f^*_M f^* E) \otimes E_Z) 
  \ar[ld]_{f\circ f_M} \ar[rd]^{\id} & & \\
& (N,E) \ar[ld]_{\id} \ar[rd]^{\id} & & (Z,E_Z) \ar[ld]_{f\circ f_M} \ar[rd] & \\
N & & N & & \pt.
} \end{equation}

On the other hand,
$[f^* E] \in \KK (\cplx, C(M))$ is represented by
\[ \xymatrix@C=15pt@R=15pt{
& (M,f^* E) \ar[dl] \ar@{=}[dr] & \\
\pt & & M
} \]
and thus $[[f^* E]] \in \KK (M,M)$ by
\[ \xymatrix@C=15pt@R=15pt{
& (M,f^* E) \ar@{=}[dl] \ar@{=}[dr] & \\
M & & M.
} \]
Therefore, $[[f^* E]] \otimes \alpha$ is given by
\[ \xymatrix{
& & (Z, f^*_M (f^* E) \otimes E_Z) 
  \ar[ld]_{f_M} \ar[rd]^{\id} & & \\
& (M,f^* E) \ar[ld]_{\id} \ar[rd]^{\id} & & (Z,E_Z) \ar[ld]_{f_M} \ar[rd] & \\
M & & M & & \pt.
} \]
Now, as discussed above, $f_* ([[f^* E]] \otimes \alpha)$ is obtained from 
this correspondence by composing the left hand arrow with $f$.
This gives precisely the correspondence (\ref{equ.brbreotimesfstalpha}), which represents
$[[E]] \otimes f_* (\alpha)$. 
\end{proof}

The above result will be used later, when we shall prove the
 $\K^0 ( - )$-linearity of certain maps in K-homology.
 
 \subsection{Functoriality properties of analytic Gysin maps for normally non-singular  inclusions}
Let $h$ be a ring spectrum defining a multiplicative cohomology theory $h^*$
and a homology theory $h_*$. Then homological Gysin maps $j^!: h_* (X)\to h_{*-r} (Y)$
associated to a map $j: Y \to X$
generally interact with the cap product (at least up to sign) by
\[ j^! (\xi \cap a) = (j^* \xi) \cap j^! (a),~
  \xi \in h^* (X),~ a \in h_* (X)   \]
see e.g. Boardman \cite[Chapter V, p. 35, 6.2]{boardman}. An analogous relation is
therefore expected to hold in analytic theory as well. We formulate this in the
next lemma.
\begin{lemma} \label{lem.gysinandcap}
Let $M$ be a K-oriented  smooth manifold. Let $E$ be a complex vector bundle over $M$.
Assume that $j: N\subset M$ is a K-oriented inclusion. Then \[ 
 j^!_{\an,\Spin^c} ([[E]] \otimes [D^{\Spin^c}_M]) = [[j^* E]] \otimes j^!_{\an,\Spin^c} [D^{\Spin^c}_M]\quad\text{in}
 \quad \K_*^\an (N)\,.
 \]
\end{lemma}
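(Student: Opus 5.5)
Throughout I take $N$ to be a closed submanifold of $M$, as in the analytic construction of Part 1. There, for a K-oriented inclusion $j: N \subset M$, the Gysin map $j^!_{\an,\Spin^c}$ is given by right Kasparov product with a bivariant class $j! \in \KK_*(C(N), C(M))$. Concretely,
\[ j^!_{\an,\Spin^c}(\beta) = j! \otimes_{C(M)} \beta, \qquad \beta \in \K^\an_*(M) = \KK_*(C(M),\cplx). \]
The class $j!$ is the composition of two pieces. The first is the Thom element of the K-oriented normal bundle $\nu$ of $N$, regarded as an element of $\KK(C(N), C_0(U))$ for a tubular neighbourhood $U \cong \nu$ of $N$. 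The second is the element of $\KK(C_0(U), C(M))$ induced by the open inclusion $U \subset M$. In the Connes--Skandalis correspondence picture recalled in the proof of Proposition~\ref{prop.kaspmultcapnaturality}, this class is the correspondence
\[ N \xleftarrow{\ \id\ } (N,1) \xrightarrow{\ j\ } M, \]
with $j$ K-oriented.

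By associativity of the Kasparov product, the claim reduces to an identity of bivariant classes in $\KK_*(C(N),C(M))$:
\[ j! \otimes_{C(M)} [[E]] \;=\; [[j^*E]] \otimes_{C(N)} j! . \]
Granting this, both sides of the lemma follow at once:
\[ j^!([[E]]\otimes [D_M]) = (j!\otimes [[E]])\otimes [D_M] = ([[j^*E]]\otimes j!)\otimes [D_M] = [[j^*E]]\otimes j^![D_M]. \]
Note that the specific class $[D^{\Spin^c}_M]$ plays no role. The identity therefore holds for arbitrary $\alpha\in \K^\an_*(M)$.

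To prove the bivariant identity I compute both products with correspondences, exactly as in the proof of Proposition~\ref{prop.kaspmultcapnaturality}.
\begin{itemize}
\item By that proof, $[[E]]\in\KK(M,M)$ is represented by $M \xleftarrow{=} (M,E) \xrightarrow{=} M$, and similarly $[[j^*E]]$ by $N \xleftarrow{=} (N,j^*E)\xrightarrow{=} N$.
\item The product $j!\otimes[[E]]$ is formed over the fibre product $N\times_M M = N$, where transversality is automatic because one leg is the identity. It is represented by $N\xleftarrow{\id}(N, j^*E)\xrightarrow{j} M$.
\item The product $[[j^*E]]\otimes j!$ is formed over $N\times_N N=N$ with bundle $j^*E\otimes 1$. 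It gives the same correspondence $N\xleftarrow{\id}(N,j^*E)\xrightarrow{j}M$.
\end{itemize}
Hence the two products coincide.

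The step I expect to need the most care is identifying Part 1's analytic $j^!_{\an,\Spin^c}$, built from the Thom class and the tubular neighbourhood, with Kasparov product by the Connes--Skandalis element of the K-oriented map $j$. The key point is that this element is by construction the same Thom-class-plus-open-inclusion composite, so its correspondence is the one displayed above. I would also check that pulling $E$ back along the tubular projection $U\to N$ and restricting $E$ to $U$ agree up to isomorphism. This holds because the projection is homotopic to the identity on $U$. With this compatibility, the multiplicativity of Thom classes, namely $\pi^*(j^*E)\otimes$Thom $=$ Thom $\otimes\, E|_U$, gives the claim.

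If one prefers to avoid correspondences, the same identity can be checked directly on Kasparov modules. The Thom element is a $C(N)$-$C_0(U)$ module, $[[E]]$ is the bimodule of sections of $E$, and tensoring with a bundle commutes with the module structure over $C(N)$ via the projection $\pi$ up to the isomorphism $E|_U\cong \pi^*j^*E$.
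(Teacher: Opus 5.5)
Your argument is correct, but it is organized differently from the paper's. The paper works with the specific class. It uses the twisting formula \eqref{equ.dspincmeeprime} and $j^!_{\an}[D_M]=[D_N]$ to rewrite the claim as $j^!_{\an}[D_{M,E}]=[D_{N,E|_N}]$. It then pushes $[D_{M,E}]$ through the three factors $i!$, $\varphi!$, $\Sigma(\pi)^{-1}$ of $j^!_{\an}$ one at a time: restriction to $\mathcal{U}$, diffeomorphism invariance, and homotopy invariance to replace $E_V$ by $\pi^*(E|_N)$. Correspondences appear only at the last step, to check $\Sigma(\pi)\otimes[D_{N,E|_N}]=[D_{V,\pi^*(E|_N)}]$.

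You instead package the whole composite $\Sigma(\pi)^{-1}\otimes\varphi!\otimes i!$ as the single Connes--Skandalis element $N\xleftarrow{\id}(N,1)\xrightarrow{j}M$. You then prove the bivariant relation $j!\otimes_{C(M)}[[E]]=[[j^*E]]\otimes_{C(N)}j!$ by two fibre-product computations that are trivial because one leg is always an identity. This gives a stronger statement: $\K^0(M)$-linearity $j^!_{\an}([[E]]\otimes\alpha)=[[j^*E]]\otimes j^!_{\an}\alpha$ for every $\alpha\in\K^\an_*(M)$, the analytic analogue of Boardman's formula quoted before the lemma. It needs neither $j^!_{\an}[D_M]=[D_N]$ nor the twisting formula, and it carries over verbatim to the oriented setting of Lemma \ref{lem.gysinandcap-bis} via Hilsum's correspondences.

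The price is the identification you flag: $\Sigma(\pi)^{-1}$ must be the Thom element $s!$ of the zero section $s$. This holds because $\Sigma(\pi)=\pi!$, $s!\otimes\pi!=1$ and $\pi!\otimes s!=1$, so the paper's composite is exactly Connes--Skandalis's $j!$ for the K-oriented embedding. This is no extra assumption, since the paper's own proof uses the same correspondence calculus when it represents $\Sigma(\pi)$ by $V\xleftarrow{\id}(V,1)\xrightarrow{\pi}N$. What the paper's route gives in return is the concrete operator-level identity $j^!_{\an}[D_{M,E}]=[D_{N,E|_N}]$ directly.
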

\begin{proof}
We are interested in the equality
 $j^!_{\an,\Spin^c} ([[E]] \otimes [D^{\Spin^c}_M]) = [[j^* E]] \otimes j^!_{\an,\Spin^c} [D^{\Spin^c}_M]$.
 
 \medskip
 \noindent
 We use $ j^!_\an $ instead of  $j^!_{\an,\Spin^c}$ and $D_M$ instead of $D^{\Spin^c}_M$.
 
  \medskip
 \noindent
Since $j^* E= E|_N$ and $j^!_\an [D_M] =[D_N]$ this means that we want to prove
 the following equality:
\[  j^!_\an ([[E]] \otimes [D_M]) =[D_{N,E|_N}]  \]
that is, 
\[  j^!_\an ([D_{M,E}]) =[D_{N,E|_N}]  \,.\]
Let $\mathcal{U}\hookrightarrow M$ be a tubular neighbourhood of $N$ and let $\varphi: \mathcal{U}\to V$ be a diffeomorphism with an orientable real vector bundle $V\xrightarrow{\pi} N$, the normal bundle of $N$ in $M$. Then, by definition 
\begin{equation}\label{useful}j^!_\an ( - ):= \Sigma(\pi)^{-1} \otimes \varphi ! \otimes i !\otimes -
\end{equation}
where 
\begin{itemize}
\item 
$i!\in \KK^*(C_0 ( U),C (M))$ is the bivariant class that defines $(i^\sharp)^* : \KK^*(C(M),\mathbb{C}) \to \KK^* (C_0 (U),\mathbb{C})$ with $i^\sharp: C_0 (U) \to C(M)$ being equal to extension by zero;
\item $\varphi !\in \KK^*(C_0 ( V),C_0  (U))$ is the bivariant element inducing the isomorphism\\
 $(\varphi^*)^*\equiv \varphi_*: \KK^* (C_0 (\mathcal{U} ), \cplx) \to \KK^* (C_0 (V), \cplx)$
 \item $\Sigma (\pi)^{-1}\in \KK^c (C (N), C_0(V))$ is the inverse of $\Sigma (\pi) \in \KK^c (C_0 (V), C(N))$ with $c$ the codimension of $N$ in $M$,
 that is the rank of $V$. In the $\Spin^c$ case considered here 
 we do not need to invert 2; recall that in this case $\Sigma (\pi)$ is induced
 by a family of $\Spin^c$-Dirac operators. 
 \end{itemize}
 Now, we know that
 $$ i ! \otimes [D_{M,E}] = (i^\sharp)^*  [D_{M,E}] =  [D_{\mathcal{U},E|_{\mathcal{U}}}]$$
Moreover, jazzing up our proof of the 
 diffeomorphism invariance of the signature class, we can check that 
 $$  \varphi ! \otimes [D_{\mathcal{U},E|_{\mathcal{U}}}] \equiv \varphi_* [D_{\mathcal{U},E|_{\mathcal{U}}}]=
   [D_{V,E_V}] $$
 with $E_V$ the bundle induced on $V$ by $\varphi$ and $E|_{\mathcal{U}}$. Notice that by homotopy invariance
 $[D_{V,E_V}] $ is equal to $[D_{V,\pi^* (E|_N)}]$ since the restriction to the zero-section of
 $E_V$ is $E|_N$. Thus, we are reduced to prove that
 $$  \Sigma(\pi)^{-1} \otimes ([D_{V,\pi^* (E|_N)}])= [D_{N,E|_N}]$$
 or, equivalently, that 
 $$[D_{V,\pi^* (E|_N)}]=\Sigma(\pi) \otimes ( [D_{N,E|_N}])$$
In the present $\Spin^c$ case these 3 elements are given by correspondences, as in Connes-Skandalis.
In particular, $\Sigma (\pi)$ is given by the correspondence
\[ \xymatrix@C=15pt@R=15pt{
& (V,{\bf 1}) \ar[dl]_{{\rm Id}} \ar[dr]^{\pi} & \\
V & & N,
} \]
where ${\bf 1}$ denotes the trivial bundle; on the other hand $ [D_{N,E|_N}]$
is given by
\[ \xymatrix@C=15pt@R=15pt{
& (N,E|_N) \ar[dl]_{{\rm Id}} \ar[dr]^{p} & \\
N & & {\rm point},
} \]
and it easy to see that the composition of these two correspondences, in the above order, is the correspondence
that defines $[D_{V,\pi^* (E|_N)}]$.
\end{proof}

\noindent
We shall  also and mainly need the following version of the above Lemma, where we only assume orientability.
Let us recall the notation, introduced in Part 1,
\begin{equation}\label{notation-sign}
\sign_K (M):= 2^{-\lfloor n/2 \rfloor} [D^{{\rm sign}}_{M}]\in \K_n^\an (M)[\smlhf],
\end{equation}
where $n=\dim M$.
If  $M$ and $N$ are  oriented smooth manifolds and  $j: N\subset M$ is a smooth oriented immersion,
then Hilsum has proved in \cite{Hil:FKBPVL} that
\begin{equation} \label{equ.hilsumgysinrestrofsignk}
j^!_\an \sign_K (M)= \sign_K (N)
\end{equation}
with $j^!_\an$ given by left Kasparov multiplication by the element $\Sigma(j)\in \KK^\ell (C(M), C(N))[\smlhf]$
given by the right hand side of \eqref{useful}, with $\ell$ equal to the codimension of the immersion.
(Needless to say, in Part 1 we have extended this result to normally non-singular inclusions of Witt spaces.)

\begin{lemma} \label{lem.gysinandcap-bis}
Let $M$ and $N$ be  oriented smooth manifolds and let $j: N\subset M$ be an oriented smooth
immersion. Let $E$ be a complex vector bundle over $M$. Then
\[  j^!_\an ([[E]] \otimes \sign_K (M)) = [[j^* E]] \otimes j^!_\an \sign_K (M) \quad\text{in}\quad \K_*^\an (N)[\smlhf]\,.\]
\end{lemma}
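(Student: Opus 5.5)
I will follow the proof of Lemma \ref{lem.gysinandcap} step by step, replacing the $\Spin^c$-Dirac operator by the signature operator. By \eqref{equ.dsignmeeprime} we have $[[E]]\otimes \sign_K(M) = 2^{-\lfloor m/2\rfloor}[D^\sign_{M,E}]$, where $m=\dim M$. By Hilsum's result \eqref{equ.hilsumgysinrestrofsignk} and \eqref{equ.dsignmeeprime} on $N$, we also have $[[j^*E]]\otimes j^!_\an\sign_K(M) = 2^{-\lfloor n/2\rfloor}[D^\sign_{N,E|_N}]$, where $n=\dim N$. The claim is therefore equivalent to
\[ j^!_\an\bigl(2^{-\lfloor m/2\rfloor}[D^\sign_{M,E}]\bigr) = 2^{-\lfloor n/2\rfloor}[D^\sign_{N,E|_N}] \quad\text{in } \K^\an_*(N)[\smlhf], \]
which is a twisted version of Hilsum's theorem. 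Write $j^!_\an = \Sigma(\pi)^{-1}\otimes\varphi!\otimes i!\otimes -$ as in \eqref{useful}. For an immersion, $\varphi$ is a local diffeomorphism from the normal bundle $V$ onto a neighbourhood of $N$ rather than a diffeomorphism onto an open set. In that case I will use the corresponding local-diffeomorphism (étale) pull-back class, which is what Hilsum's $\Sigma(j)$ uses.

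\textbf{Step 1 (restriction and transport).} Restriction to an open set is compatible with twisting, so $i!\otimes[D^\sign_{M,E}] = [D^\sign_{\mathcal U,E|_{\mathcal U}}]$. Next, I will transport along $\varphi$: the signature operator is natural under orientation-preserving (local) diffeomorphisms, since the pulled-back metric and connection give the pulled-back operator. This yields $[D^\sign_{V,\varphi^*E}]$. Homotopy invariance then replaces $\varphi^*E$ by $\pi^*(E|_N)$.

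\textbf{Step 2 (the key step).} It remains to show
\[ \Sigma(\pi)\otimes 2^{-\lfloor n/2\rfloor}[D^\sign_{N,E|_N}] = 2^{-\lfloor m/2\rfloor}[D^\sign_{V,\pi^*(E|_N)}]. \]
Here correspondences are not available, because $N$ is only oriented. I will instead reduce to the untwisted case. By Step 1 without twisting, Hilsum's theorem is exactly the identity above for the trivial bundle. Now apply $[[\pi^*(E|_N)]]\otimes-$ to both sides of that untwisted identity. On the right, \eqref{equ.dsignmeeprime} on $V$ gives the twisted class. On the left, I need the commutation
\[ [[\pi^*F]]\otimes(\Sigma(\pi)\otimes x) = \Sigma(\pi)\otimes([[F]]\otimes x), \qquad F=E|_N. \]
This follows from the $C(N)$-linearity of the Thom class $\Sigma(\pi)\in\KK(C_0(V),C(N))$: it is a family over $N$, so its Hilbert module is a $C(N)$-module. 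This gives $[[\pi^*F]]\otimes_{C_0(V)}\Sigma(\pi) = \Sigma(\pi)\otimes_{C(N)}[[F]]$, since both equal the family operator twisted by $F$. Concretely, $[[F]] = [F]\otimes[\Delta_N]$, and associativity of the Kasparov product together with Kasparov's $\tau_{C(N)}$-compatibility for $\mathcal R KK(N;\cdot,\cdot)$-classes yields the identity. Applying \eqref{equ.dsignmeeprime} on $N$ then finishes the proof.

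I expect Step 2, the $C(N)$-linearity of $\Sigma(\pi)$ in the case of an oriented but not $\Spin^c$ bundle with $2$ inverted, to be the main obstacle. My first approach is to use the fact that $\Sigma(\pi)$ comes from a fibrewise signature-type family. That makes it naturally an element of Kasparov's representable group $\mathcal{R}\KK(N;C_0(V),C(N))$, and products of such elements commute with $C(N)$-module twists. The immersion subtlety in Step 1 should be handled just as in Hilsum's argument.
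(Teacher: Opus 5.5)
Your proposal is correct, but its key step differs from the paper's. The paper does not give up on correspondences. It uses Hilsum's correspondences with merely oriented right-hand maps \cite[Section 4.4]{Hil:FKBPVL}, for which the Connes--Skandalis calculus holds in $\KK^*(\cdot,\cdot)[\smlhf]$ with the signature operator, and then reruns the proof of Lemma \ref{lem.gysinandcap} verbatim. In particular, the final identity $\Sigma(\pi)\otimes[D_{N,E|_N}]=[D_{V,\pi^*(E|_N)}]$ is read off by composing the oriented correspondences that represent $\Sigma(\pi)$ and the twisted signature class of $N$. Your remark that correspondences are unavailable for oriented $N$ is therefore inaccurate, though it does no harm to your argument.

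Instead, you extract the untwisted identity $\Sigma(\pi)\otimes\sign_K(N)=2^{-\lfloor m/2\rfloor}[D^\sign_V]$ from Hilsum's theorem combined with the untwisted Step 1, using that $\Sigma(\pi)$ is invertible. You then move the twist across the Thom class via $[[\pi^*F]]\otimes\Sigma(\pi)=\Sigma(\pi)\otimes[[F]]$. That identity is valid. $\Sigma(\pi)$ is represented by the fibrewise signature family, which is a $C(N)$-linear Kasparov module. Since the pulled-back connection on $\pi^*F$ is flat along the fibres, both products are represented by the same cycle: fibrewise forms with values in $F$, with the operator tensored by the identity.

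Your route needs only Hilsum's untwisted Gysin theorem plus a standard $\mathcal{R}\KK(N;\cdot,\cdot)$-linearity fact, rather than the full oriented-correspondence calculus, which the paper invokes without detail. It also isolates the real content of the lemma: the Thom element commutes with $\K^0(N)$-twisting. The paper's route is shorter once that calculus is granted. It is also uniform with the correspondence arguments used elsewhere in the paper, for instance in Proposition \ref{prop-boardman}.
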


\begin{proof}
Correspondences 
\[ \xymatrix@C=15pt@R=15pt{
& (Z,E) \ar[dl]_{f_X} \ar[dr]^{g_Y} & \\
X & & Y,
} \]
where we more generally consider $g_Y$ oriented have been considered by Hilsum in \cite[Section 4.4]{Hil:FKBPVL}.
For these more general correspondences we need to work with $\KK^* (X,Y)[\smlhf]$; if we work 
with this group and the signature operator then the same results of Connes-Skandalis hold, thanks to the
functoriality results established by Hilsum. Thus the proof of the Lemma \ref{lem.gysinandcap} can be run in this more general
setting giving the equality $j^!_\an ([[E]] \otimes \sign_K (M)) = [[j^* E]] \otimes j^!_\an \sign_K (M)]$
in  $\K_*^\an (N)[\smlhf]$.
\end{proof}

Another very useful result, inspired by a result by Boardman in the topological context, is the following one.
Let $j:M\to S$ be an {\em oriented} immersion of oriented manifolds. Let $\ell$ be the codimension of this
immersion. As already recalled, we then have an element $\Sigma(j)\in \KK^\ell (C(M), C(S))[\smlhf]$;  
{\bf right} Kasparov multiplication by this element defines a homomorphism
\begin{equation}\label{lower-shriek-oriented}
j_!: \K^* (M)[\smlhf]\rightarrow \K^{*+\ell} (S)[\smlhf]\,,\quad j_! (x):= x\otimes \Sigma (j)\,.
\end{equation}
which extends to the oriented case the well-known homomorphism in K-cohomology defined for K-oriented
immersions.

\begin{prop}\label{prop-boardman}
Let $x\in \K^0 (M)[\smlhf]$. Let $j:M\to S$ be an oriented immersion of oriented manifolds. Then
 \begin{equation} \label{boardman-analytic-pre}
 [[j_! (x)]] \otimes \sign_K (S)
  = j_* ([[x]] \otimes j^! \sign_K (S))\end{equation}
\end{prop}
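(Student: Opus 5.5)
We will run the argument entirely in the correspondence picture of Connes--Skandalis, in the oriented version with $2$ inverted developed by Hilsum \cite[Section 4.4]{Hil:FKBPVL}, exactly as in the proof of Lemma \ref{lem.gysinandcap-bis}. Both sides of \eqref{boardman-analytic-pre} are additive in $x$ and commute with multiplication by $\smlhf$. We therefore first reduce to the case $x=[E]$ for a complex vector bundle $E\to M$: the group $\K^0(M)[\smlhf]$ is generated over $\ism$ by such classes, and $-[F]$ can be handled by additivity. In the correspondences below, a right leg $g$ that is merely oriented stands for Hilsum's element $\Sigma(g)$. This element is normalized as in \eqref{notation-sign}, so the right leg $S\to\pt$ represents $\sign_K(S)$ and $M\to\pt$ represents $\sign_K(M)$.

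\emph{Left-hand side.} By definition \eqref{lower-shriek-oriented}, $j_!([E])=[E]\otimes\Sigma(j)$ is the composite of $\pt\leftarrow (M,E)\xrightarrow{=}M$ with $M\xleftarrow{=}(M,1)\xrightarrow{j}S$. This composite is $\pt\leftarrow(M,E)\xrightarrow{j}S$. Next we take the external product with $[\Delta_S]$, verbatim as in the proof of Proposition \ref{prop.kaspmultcapnaturality}. This shows that $[[j_!(x)]]$ is represented by $S\xleftarrow{j}(M,E)\xrightarrow{j}S$. Finally we compose with $S\xleftarrow{=}(S,1)\to\pt$. The transversality condition is automatic, since one of the two maps is the identity, and the fibre product is $M\times_S S=M$. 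We obtain the correspondence
\[ S\xleftarrow{\;j\;}(M,E)\longrightarrow \pt, \]
whose right leg is the composite $M\xrightarrow{j}S\to\pt$, equipped with the composite orientation.

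\emph{Right-hand side.} By Hilsum's result \eqref{equ.hilsumgysinrestrofsignk}, $j^!\sign_K(S)=\sign_K(M)$, which is represented by $M\xleftarrow{=}(M,1)\to\pt$. Composing with $[[x]]$, represented by $M\xleftarrow{=}(M,E)\xrightarrow{=}M$ (again as in Proposition \ref{prop.kaspmultcapnaturality}), gives $M\xleftarrow{=}(M,E)\to\pt$. We then push forward along $j$. As computed in the proof of Proposition \ref{prop.kaspmultcapnaturality}, this simply post-composes the left leg with $j$, and yields $S\xleftarrow{j}(M,E)\to\pt$. This agrees with the correspondence obtained for the left-hand side.

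The main obstacle is the identification of the two right legs. On the left the leg is the composite of the oriented maps $j$ and $S\to\pt$; on the right it is $M\to\pt$ with its own orientation. We must know that $\Sigma(j)\otimes\sign_K(S)=\sign_K(M)$ with the normalizing powers $2^{-\lfloor n/2\rfloor}$ matching, and that the oriented Connes--Skandalis product formula (fibre products, together with the equality of composite and intrinsic Gysin elements) is valid after inverting $2$. The first statement is precisely \eqref{equ.hilsumgysinrestrofsignk}, provided the orientation of $j$ is the one compatible with the orientations of $M$ and $S$, which is the standing convention. The second is Hilsum's functoriality theorem. So the plan is to cite Hilsum for both facts, and to check the compatibility of orientation conventions and normalizations carefully, since that is where a sign or a power of $2$ could go wrong.
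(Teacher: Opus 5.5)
Your proposal is correct and follows essentially the same route as the paper: in Hilsum's oriented version of the Connes--Skandalis correspondence calculus, both arguments compute each side and arrive at the same correspondence $S\xleftarrow{j}(M,x)\to\pt$. The \emph{obstacle} you flag is milder than you suggest. If you keep $j^!\sign_K(S)=\Sigma(j)\otimes\sign_K(S)$ as a composed correspondence, as the paper does, rather than first replacing it by $\sign_K(M)$, then both right legs are literally the composite $M\xrightarrow{j}S\to\pt$, and no separate appeal to \eqref{equ.hilsumgysinrestrofsignk} is needed.
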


\begin{proof}
Let $x\in \K^0 (M)[\smlhf]$. Recall that $[[x]]=x\otimes [\Delta_M]$ with $[\Delta_M]\in \KK_0 (C(M)\otimes C(M),C(M))$
defined by the diagonal $\Delta_M: M\to M\times M$. 
Similarly, $[[j_! (x)]]= j_! (x)\otimes  [\Delta_S]$. 
We shall
prove \eqref{boardman-analytic-pre} using correspondences defined by oriented maps, as in \cite[Section 4.4]{Hil:FKBPVL}. As the arguments are similar to the ones establishing Proposition \ref{prop.kaspmultcapnaturality} we shall not spell out all the details.

\noindent
We rewrite \eqref{boardman-analytic-pre} as
$$(j_! (x)\otimes  [\Delta_M]) \otimes \sign_K (S)=  j_* ((x\otimes [\Delta_M]) \otimes j^! \sign_K (S))\,.$$
Let $p: M\to {\rm pt}$ the map to a point. Consider $x\in \K^0 (M)$; it is given by
\[ \xymatrix@C=15pt@R=15pt{
& (M,x) \ar[dl]_{p} \ar[dr]^{{\rm id}} & \\
{\rm pt} & & M,
} \]
whereas $[[x]]$, which is  $x\otimes [\Delta_M]\in \KK^0 (C(M),C(M))$, is given by 
\[ \xymatrix@C=15pt@R=15pt{
& (M,x) \ar[dl]_{{\rm id}} \ar[dr]^{{\rm id}} & \\
M & & M.
} \]
The element $\Sigma (j)\in \KK^\ell (C(M), C(S))[\smlhf]$ is given by the correspondence
\[ \xymatrix@C=15pt@R=15pt{
& (M,1) \ar[dl]_{{\rm id}} \ar[dr]^{j} & \\
M & & S.
} \]
and thus it is not difficult to see, using the composition of correspondences, that $x\otimes  \Sigma (j)$ is given by
\[ \xymatrix@C=15pt@R=15pt{
& (M,x) \ar[dl]_{p} \ar[dr]^{j} & \\
{\rm pt} & & S
} \]
and that 
 $[[x\otimes  \Sigma (j)]]$, which is $[[j_! (x)]]$, is given by
\[ \xymatrix@C=15pt@R=15pt{
& (M,x) \ar[dl]_{j} \ar[dr]^{j} & \\
S & & S.
} \]
Since $\sign_K (S)\in \K^\an_* (S)$ is given by
\[ \xymatrix@C=15pt@R=15pt{
& (S,1) \ar[dl]_{{\rm id}} \ar[dr]^{p} & \\
S & & {\rm pt}.
} \]
we conclude, once again with (easy) arguments involving the composition of correspondences, that the left hand side of  \eqref{boardman-analytic-pre} 
is given by the correspondence
\begin{equation}\label{leftboardman}
 \xymatrix@C=15pt@R=15pt{
& (M,x) \ar[dl]_{j} \ar[dr]^{p} & \\
S & & {\rm pt}.
} 
\end{equation}
Now, similarly, $\Sigma(j)\otimes \sign_K (S)$ is given by
\[ \xymatrix@C=15pt@R=15pt{
& (M,1) \ar[dl]_{{\rm id}} \ar[dr]^{p} & \\
M & & {\rm pt}.
} \]
so that $[[x]]\otimes (\Sigma(j)\otimes \sign_K (S))$ is given by
\[ \xymatrix@C=15pt@R=15pt{
& (M,x) \ar[dl]_{{\rm id}} \ar[dr]^{p} & \\
M & & {\rm pt}.
} \]
When we apply $j_*$, which  is also given by a Kasparov product, we finally obtain that the right hand side 
of  \eqref{boardman-analytic-pre}  is given by 
\begin{equation*}
 \xymatrix@C=15pt@R=15pt{
& (M,x) \ar[dl]_{j} \ar[dr]^{p} & \\
S & & {\rm pt}
} 
\end{equation*}
and since this is the same as \eqref{leftboardman} we conclude that the proof of the Proposition is now
complete.
\end{proof}

\section{Siegel-Sullivan orientation classes.}\label{sect-siegel-sullivan}

\noindent
We recall basic material on the Siegel-Sullivan orientation
from \cite{Sul:GTP}, \cite{Sie:WSGCTKOP} and \cite{Ban:TSSKSS}.
The first of these references constructs the class for vector bundles
(in fact for disc block bundles) and for PL manifolds,
the second extends this to Witt pseudomanifolds, and the third
lifts this extension to the ring spectrum level,
which led to a number of transfer homomorphism results.
In \cite[Cor. 6.4, p. 200]{Sul:GTP}, Sullivan
associates to every oriented vector bundle $\xi$
(or even to an oriented PL block bundle) of rank $d$ over a compact polyhedron $B$
a Thom class
\[ \Delta_\SO (\xi) \in \widetilde{\KO}^d (\Th \xi)[\smlhf]. \]
This element is determined by the signature invariant
\[
\bigoplus_{j=0}^\infty \Omega^\SO_{d+4j} (D\xi, S\xi)\otimes \rat
 \longrightarrow \rat,
\]
of $\xi$ given by
\[
[f:(M^{d+4j},\partial M)\to (D\xi, S\xi)] \mapsto \sigma (f^{-1} (B)),
\]
together with analogously defined $\rat/\intg$-invariant, see farther below.
Here, $f$ has been made transverse to $B$ in the disc bundle $D\xi$.
(Although $B$ is not assumed to be a manifold, transversality
still works, see e.g. the remarks of Browder \cite[II.2, p. 33f]{browder}
based on Thom's transversality theorem.) 
Then the preimage $f^{-1} (B)$ is a compact manifold of
dimension 
\[ \dim M + \dim B - \dim (D\xi) = 
   (d+4j) + \dim B - (d+\dim B) = 4j.
\]
It is oriented, since $M$ is oriented, and the normal bundle
of $f^{-1} (B)$ is oriented, being the pullback of the oriented
bundle $\xi$.
Then $\sigma (f^{-1} (B))\in \intg$ denotes the signature of this
$4j$-dimensional oriented compact manifold.
For the $\rat/\intg$-part, one uses analogously the $\intg/_k$-signature
of $\intg/_k$-manifolds that represent bordism with 
$\intg/_k$-coefficients.
If $\xi = \gamma$ is the canonical bundle over the Grassmannian
$B = \BSO_{4n},$ then $\Delta_\SO (\gamma)$ has a description
as
\[ \frac{\Lambda^+ - \Lambda^-}{{\Lambda^+ + \Lambda^-}}, \]
where $\Lambda$ denotes the exterior algebra
of $\gamma$ regarded as a representation of $\SO_{4n}$
and $\Lambda_+, \Lambda_-$ are the $\pm 1$-eigenspaces
of the involution given by Clifford multiplication by the volume element.
If $M$ is a closed oriented $n$-dimensional manifold with stable normal bundle
$\nu_M,$ then $\Delta_\SO (\nu_M)$ corresponds under
Alexander duality to an element $\Delta_\SO (M) \in \KO_n (M)[\smlhf]$,
which is an orientation.
Let $X$ be a closed Witt space of dimension $n$.
Drawing on Sullivan's methods,
Siegel constructs in \cite{Sie:WSGCTKOP} a canonical orientation class
\[ \mu_X \in \KO_n (X) \otimes \ism.  \]
(In fact, the class lives in connective $\KO$-homology.)
We shall refer to $\mu_X$ as the 
\emph{Siegel-Sullivan orientation class} of $X$.
Let us briefly outline Siegel's construction, which rests on two fundamental
facts due to Sullivan:
First, there is an exact sequence
\[ 
0 \to \KO^i (Y,B)\otimes \ism \longrightarrow
  \KO^i (Y,B)^\wedge \oplus \KO^i (Y,B)\otimes \rat
  \longrightarrow \KO^i (Y,B)^\wedge \otimes \rat \to 0, 
\]  
where $\KO^i (Y,B)^\wedge$ denotes the profinite completion of $\KO^i (Y,B)$
with respect to groups of odd order.
Second, the natural transformation 
$\Delta_{\SO *}: \Omega^\SO_i (Y,B) \to (\KO [\smlhf])_i (Y,B)$
induces a Conner-Floyd type isomorphism
\[ \Omega^\SO_{i+4*} (Y,B) \otimes_{\Omega^\SO_* (\pt)} \ism
   \cong \KO_i (Y,B) \otimes \ism \]
of $\intg/4\intg$-periodic theories for compact PL pairs $(Y,B)$,
\cite{Sul:GTP}, \cite[p. 85]{madsenmilgram}.
Together with universal coefficient considerations, these two facts 
imply that elements of 
$\KO^i (Y,B)\otimes \ism$ are pairs $(\sigma_0, \tau_0)$
of homomorphisms
$\sigma_0: \Omega^\SO_{i+4*} (Y,B) \otimes \rat \to \rat$
and 
$\tau_0: \Omega^\SO_{i+4*} (Y,B;\rat/\ism) \to \rat/\ism$  
such that the \emph{periodicity relations}
\begin{equation} \label{equ.sigmatauperiodicityrelations}
\sigma_0 ([f][M\to \pt]) = \sigma (M)\cdot \sigma_0 [f],~
\tau_0 ([f][M\to \pt]) = \sigma (M)\cdot \tau_0 [f]
\end{equation}
with respect to multiplication by a closed manifold $M$ hold and the diagram
\[ \xymatrix{
\Omega^\SO_{i+4*} (Y,B) \otimes \rat \ar[d] \ar[r]^>>>>>>>>{\sigma_0} 
   & \rat \ar[d] \\
\Omega^\SO_{i+4*} (Y,B;\rat/\ism) \ar[r]^>>>>>{\tau_0} & \rat/\ism
} \]
commutes.
To define $\mu_X$ for a closed Witt space $X^n$,
choose a PL embedding $X\subset \real^m$, $m$ large, of codimension $4k$.
Let $(N,\partial N)$ be a regular neighborhood of $X$.
Since $\real^m$ is a manifold, $(N,\partial N)$ is a (compact) PL manifold
with boundary (\cite[p. 34, Prop. 3.10]{rourkesanderson}).
We will describe an element in
$\KO^{4k} (N,\partial N)\otimes \ism,$ which corresponds under 
Alexander-Spanier-Whitehead
duality to $\mu_X \in \KO_n (X)\otimes \ism$.
Therefore, we need to specify homomorphisms
$(\sigma_X, \tau_X)$ satisfying the above periodicity relations and
the integrality condition, i.e. commutativity of
\[ \xymatrix{
\Omega^\SO_{4(k+*)} (N,\partial N) \otimes \rat 
          \ar[d] \ar[r]^>>>>>>>>{\sigma_X} & \rat \ar[d] \\
\Omega^\SO_{4(k+*)} (N,\partial N;\rat/\ism) 
   \ar[r]^>>>>>{\tau_X} & \rat/\ism.
} \]
The homomorphism $\sigma_X$ is 
\[ \sigma_X ([(M,\partial M) \stackrel{f}{\longrightarrow} (N,\partial N)]
  \otimes r)
    := \sigma (\widetilde{f}^{-1} (X)) \otimes r,~ r\in \rat, \]
where one uses the block-transversality results of
\cite{buonrs}, \cite{mccrory} to make $f$ transverse to $X$
in the PL manifold $N$. The preimage $\widetilde{f}^{-1} (X) \subset M$
under the transverse map $\widetilde{f}$ has the same local structure
as $X$ and thus is again a Witt space with a well-defined 
signature $\sigma (\widetilde{f}^{-1} (X)) \in \intg$.
The homomorphism $\tau_X$ is obtained by specifying a sequence 
of homomorphisms
\[ \tau_{X,k}: \Omega^\SO_* (N,\partial N; \intg/_k) 
   \longrightarrow \intg/_k,~ k \text{ odd}, \]
compatible with respect to divisibility,
which are defined in much the same way as $\sigma_X$, but
using oriented $\intg/_k$-manifolds to represent elements of
$\Omega^\SO_* (N,\partial N; \intg/_k)$. By Novikov additivity
for the signature of compact Witt spaces with boundary, the
transverse inverse image of $X$ in the
$\intg/_k$-manifold has a well-defined (and bordism invariant)
signature in $\intg/_k$, which defines $\tau_{X,k}$.
The periodicity and integrality conditions are satisfied
and thus an element $\mu_X$ is obtained.
The homomorphism
$c_*: \KO_n (X)\otimes \ism \to \KO_n (\pt) \otimes \ism$
induced by the constant map $c: X\to \pt$ sends 
$\mu_X$ to the signature of $X$.
Using the orientation class $\mu_X$, Siegel obtains a natural
transformation 
\[ \mu^\Witt: \Omega^\Witt_* (-) \longrightarrow \KO [\smlhf]_* (-) \]
of homology theories by setting
\[ \mu^\Witt ([X \stackrel{f}{\longrightarrow} Y]) 
   = f_* (\mu_X). \]
This transformation then reduces to the signature homomorphism
on coefficient groups. In terms of the transformation, the orientation
class can of course be recovered as
\[ \mu_X = \mu^\Witt ([\id_X]). \]
Siegel's transformation factors through the homomorphism induced
by the connective cover $\ko [\smlhf] \to \KO [\smlhf],$ since
$\Omega^\Witt_* (-)$ is connective.
He shows that the natural transformation
\[ \mu^\Witt [\smlhf]: \Omega^\Witt_* (-) \otimes \ism
   \longrightarrow \ko [\smlhf]_* (-) \otimes \ism \]
is an equivalence of homology theories.
Furthermore, if $X=M$ is a
smooth compact manifold, then $\mu_M$ agrees with the Sullivan
orientation $\Delta_\SO (M)$,
\[ \mu_M = \Delta_\SO (M). \]
Let $\MWITT$ denote the (Quinn-type) ring spectrum associated to the
multiplicative ad-theory of Witt spaces, representing Witt bordism
(see Banagl-Laures-McClure \cite{BanLauMcC:LFCISNC}),
and let
\[ \Delta: \MWITT \longrightarrow \KO [\smlhf] \]
denote the second author's ring-spectrum level Siegel-Sullivan orientation,
\cite{Ban:TSSKSS}. It restricts to the Sullivan
orientation $\Delta: \MSPL \to \KO [\smlhf]$ under the canonical
map $\MSPL \to \MWITT$.
By \cite[Prop. 5.7]{Ban:TSSKSS}, the induced natural transformation
$\Delta_*: \Omega^\Witt_* (-) \to \KO_* (-)[\smlhf]$
of homology theories agrees with Siegel's transformation $\mu^\Witt$
as described above.
The homotopy ring of the complex spectrum $\K$ is $\pi_* (\K)=\intg [\beta^{\pm 1}],$
where $\beta$ is the complex Bott element in degree $2$, i.e.
$\beta$ is represented by the reduced canonical complex line bundle
$H-1 \in \widetilde{\K}^0 (S^2)$.
On $\pi_4$, the complexification $c:\KO \to \K$ induces multiplication by $2$,
$c_* =2: \pi_4 (\KO) = \intg \to \intg = \pi_4 (\K)$.
Thus there does not exist an element in $\pi_4 (\KO)$ that maps to
$\beta^2$. But after inverting $2$, such an element exists.
Let $a\in \pi_4 (\KO)[\smlhf]$ be the unique element with
$c_* (a) = \beta^2.$
Then on homotopy groups, $\Delta$ induces the homomorphism
\[ \Delta_*: \Omega^\Witt_{4k} = \MWITT_{4k} \longrightarrow
   \KO [\smlhf]_{4k} = Z \langle a^k \rangle \]
given by
\begin{equation} \label{equ.siegelsullorientoncoeffs} 
\Delta_* [V^{4k} \to \pt] = \sigma (V)\cdot a^k,
\end{equation}
where $\sigma (V)\in \intg$ is the signature of the intersection form
on the intersection homology groups $IH_{2k} (V;\rat)$ of $V$.
The Siegel-Sullivan orientation class of a compact $n$-dimensional Witt space
$(V,\partial V)$ is given by the image
\[ \Delta (V) := \Delta_* [\id_V] \in \KO_n (V,\partial V)[\smlhf]  \]
of the Witt bordism class of the identity on $V$.

We showed in part 1 (\cite[Thm. 7.6]{Part1}; the special case
of a manifold is Thm. 7.2 there)
that if $c:\KO \to \K$ denotes complexification and 
$\Psi^2: \KO [\tfrac12] \to \KO [\tfrac12]$ the stable second Adams operation, 
then the class
$c(\Psi^2)^{-1} \Delta (M) \in \K^\topo_n (M)[\smlhf]$
corresponds to $\mathrm{sign}_K(M).$
In light of this, we adopt the following definitions.
For an oriented vector bundle $\xi$ of rank $d$ over a space $X$, we set
\[ \Delta_\cplx (\xi) := c (\Psi^2)^{-1} \Delta_\SO (\xi) 
   \in \K^d (\Th (\xi),\infty)
   \otimes \intg [\smlhf], \]
and for an oriented compact manifold 
$(M,\partial M)$, we set
\[ \Delta_\cplx (M,\partial M) := c (\Psi^2)^{-1} \Delta_\SO (M,\partial M) 
   \in \K^\topo_n (M,\partial M)
   \otimes \intg [\smlhf]. \]
If $M$ has empty boundary, we will briefly write $\Delta_\cplx (M)$ for
$\Delta_\cplx (M,\varnothing)$.

\begin{lemma}\label{lemma:delta-complex-orientation}
The class $\Delta_\cplx (\xi)$ is a $\K\smlhf$-orientation for the
vector bundle $\xi$.
\end{lemma}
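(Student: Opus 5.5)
The plan is to verify the defining property of a Thom class (orientation) fiberwise. A class $u\in \K^d(\Th(\xi),\infty)[\smlhf]$ is an orientation precisely when, for every point $x$ of the base, its restriction to the Thom space of the fiber $\Th(\xi_x)=S^d$ is a generator of the free rank-one $\pi_*(\K)[\smlhf]$-module $\widetilde{\K}^d(S^d)[\smlhf]\cong \ism$, that is, a unit multiple of the suspended unit. Since $\Delta_\SO(\xi)$ is Sullivan's $\KO[\smlhf]$-orientation, its restriction to each fiber is a generator $g$ of $\widetilde{\KO}^d(S^d)[\smlhf]\cong \KO^0(\pt)[\smlhf]$, namely $\pm$ the suspended unit $\sigma^d 1$. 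Both $c$ and $(\Psi^2)^{-1}$ are natural stable cohomology operations, so they commute with restriction to fibers. It therefore suffices to compute $c(\Psi^2)^{-1}(\sigma^d 1)\in \widetilde{\K}^d(S^d)[\smlhf]$ and check that it is a generator, i.e.\ a unit of $\ism$ times $\sigma^d 1$.

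The key steps are as follows. First, the stable Adams operation $\Psi^2$ on $\KO[\smlhf]$ is a map of ring spectra and is stable, i.e.\ it commutes with suspension isomorphisms. In degree $0$ it acts as the identity on $\pi_0$, since $\Psi^2(1)=1$. Hence $\Psi^2(\sigma^d 1)=\sigma^d\Psi^2(1)=\sigma^d 1$, and so $(\Psi^2)^{-1}(\sigma^d 1)=\sigma^d 1$. Second, complexification $c$ is a ring map, so $c(\sigma^d 1)=\sigma^d 1$ in $\widetilde{\K}^d(S^d)[\smlhf]$. Consequently the fiber restriction of $\Delta_\cplx(\xi)$ equals $\pm\sigma^d 1$, which is a generator, and $\Delta_\cplx(\xi)$ is a $\K[\smlhf]$-orientation. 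If one prefers a more global formulation, the same conclusion follows from the fact that $c(\Psi^2)^{-1}$ is a map of ring spectra $\KO[\smlhf]\to\K[\smlhf]$. Such a map carries Thom classes to Thom classes, because the Thom isomorphism condition is detected by a multiplicative, fiberwise condition.

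The main obstacle is the bookkeeping around the stable Adams operation. Unstably, $\Psi^2$ acts on $\widetilde{\KO}^0(S^{4k})$ by $2^{2k}$, and the stable operation is obtained by renormalizing by powers of $2$, which is exactly why $2$ must be inverted. One has to make sure that the normalization of the stable $(\Psi^2)^{-1}$ used in Part 1 fixes the suspended unit $\sigma^d 1$ (or at worst multiplies it by a power of $2$, which is still a unit in $\ism$). Either way the image is a generator. If the degree $d$ is not divisible by $4$, the argument is unchanged, since we work with suspended units throughout and Bott periodicity is not needed.
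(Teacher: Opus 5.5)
Your proof is correct and is essentially the paper's argument. The paper notes that $\Psi^2$ and $c$ are morphisms of ring spectra and cites the standard fact (Rudyak, Prop.~V.1.6) that such morphisms carry orientations to orientations; your fiberwise check that $c(\Psi^2)^{-1}(\sigma^d 1)=\sigma^d 1$ verifies exactly that fact by hand. Your normalization worry is unnecessary: a stable map of ring spectra commutes with suspension and fixes the unit, so the fiber restriction is exactly $\pm\sigma^d 1$, as the paper's definition of an orientation requires.
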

\begin{proof}
Sullivan's class $\Delta_\SO (\xi)$ is a $\KO \smlhf$-orientation for $\xi$.
The morphisms $\Psi^2: \KO\smlhf \to \KO\smlhf$ 
(stable Adams operation)
and $c:\KO \to \K$ (complexification)
are both ring morphisms of ring spectra.
The statement then follows from standard results, e.g.
\cite[p. 305, Prop. V.1.6]{Rud:TSOC}.
\end{proof}

\section{Bordism-like description of K-homology and the isomorphism
with $\K^\an_* ( - )$} \label{sec:BdismDescKHom}

A significant r\^ole in our arguments will be played by a bordism-like description of K-homology due to Jakob \cite{Jak:BDH} which is closely related to the geometric K-homology theory of Baum and Douglas. In this section we will recall Jakob's definition, apply it to both ordinary K-theory and K-theory with $2$ inverted, and establish isomorphisms with both analytic K-homology and geometric K-homology.

\subsection{Bordism-like description of $\K_* ( - )$ and  $\K_* ( - )[\tfrac12]$ }

\noindent
Recall that given a multiplicative cohomology theory $h^*$, represented by a 
ring spectrum $h$, Jakob 
provides a {\em bordism-type  description} $h'_*$ 
of the associated homology theory $h_*$ where a {\em description} means more precisely that there exists an explicit  isomorphism $\varphi : h'_* (X)\to h_* (X)$ 
with interesting functoriality properties.
We shall also refer to 
$h'_*$ as the {\it geometric description} of $h_*$.

Let $\Th (\xi)$ denote the Thom spectrum of a stable vector bundle $\xi$.
Recall that an \emph{$h^*$-Thom class} (or $h$-\emph{orientation}) of $\xi$
is a class $u(\xi) \in \widetilde{h}^0 (\Th \xi)$ which restricts
to $\pm 1 \in \pi_0 (h)$ on every fiber,
where $1$ denotes the unit of the ring coefficient ring $\pi_* (h)$.
The geometric group $h'_* (X,A)$ of a pair $(X,A)$ of spaces 
is obtained by starting with quadruples of the form
$(M,u(\nu_M),x,f),$ where $M$ is a compact smooth manifold with (possibly empty) boundary, $u(\nu_M)$ is an $h$-orientation of the stable normal bundle $\nu_M$,
$x\in h^* (M)$ is a cohomology class, 
and $f: (M,\partial M)\to (X,A)$ is a continuous map.
Let us briefly write $u_M := u(\nu_M)$.
For $n$-dimensional $M$, the stable Thom class $u_M$ determines a 
homology class
$[M,\partial M]_h \in h_n (M,\partial M)$, the 
\emph{$h_*$-fundamental class} of $(M,\partial M)$,
such that there is a Poincar\'e-Lefschetz duality isomorphism
$h^i (M) \to h_{n-i} (M,\partial M),$ $x\mapsto x\cap [M,\partial M]_h$.
Indeed, assuming for simplicity that $M$ has empty boundary, 
a smooth embedding $M\subset \real^{n+N}$ with normal
bundle $\nu^N$ yields a Thom-Pontrjagin collapse map
$c_N: S^{n+N} \to \Th \nu^N$ to the Thom space of $\nu^N$.
Stably, these $c_N$ induce a morphism of spectra
$c: S^n = \Sigma^{-N} \Sigma^\infty S^{n+N} \to
 \Sigma^{-N} \Sigma^\infty \Th \nu^N = \Th \nu,$ which is sometimes
referred to as the Browder-Novikov morphism.
This morphism determines a class
$[\Th \nu]_h \in h_n (\Th \nu)$ by taking the image of the unit
$1\in h_0 (S)$ under
\[ h_0 (S) \stackrel{\simeq}{\longrightarrow} h_n (S^n)
  \stackrel{c_*}{\longrightarrow} h_n (\Th \nu),
 \]
where the first map is the suspension isomorphism.
Now $[M]_h$ is the image of $[\Th \nu]_h$
under the Thom-Dold isomorphism
$h_n (\Th \nu) \cong h_n (M)$, see Rudyak \cite[p. 319, Prop. 2.8]{Rud:TSOC}.
(Thus the $h^*$-Thom class $u(\nu)$ enters only in the definition of
the Thom-Dold isomorphism.)

Two  quadruples  are called \emph{equivalent} if they differ by a diffeomorphism
that respects the two Thom classes and the two cohomology classes.
Equivalence classes of quadruples are called \emph{cycles}.
The geometric group $h'_* (X,A)$ is the quotient of the free abelian
group generated by these cycles by the following equivalence relations:
\begin{enumerate}
\item $(M, u_M, x+y, f)$ is equivalent to $(M, u_M, x, f) + (M, u_M, y, f),$
\item $(M \sqcup N, u_{M \sqcup N}, x, f)$ is equivalent to 
   $(M, u_M, x|_M, f|_M) + (N, u_N, x|_N, f|_N),$
\item \emph{Bordism}: Two cycles $(M_1, u_{M_1}, x_1, f_1)$ and $(M_2, u_{M_2}, x_2, f_2)$ are bordant if there is a 4-tuple $(W, u_W, y, g)$ such that $W$ is a compact smooth manifold with boundary, $M_1 \sqcup M_2$ is a regularly embedded submanifold of $\pa W,$
\begin{equation*}
	g(\pa W \setminus (M_1 \sqcup M_2)) \subseteq A,
\end{equation*}
the $h^*$-orientations $u_{M_i}$ of $\nu_{M_i}$ are induced by $u_W$ via
\begin{equation*}
	u_{M_i} = (-1)^i u_W|_{M_i},
\end{equation*}
and $y|_{M_i} = x_i,$ $g|_{M_i} = f_i,$
\item \emph{Vector bundle modification}:
  we declare the cycle $(M,u(\nu_M),x,f)$ and  the cycle
  \[ ((S(E\oplus 1), u(\nu_{S(E\oplus 1)}), \sigma_! (x), f\circ \pi)) \] 
   equivalent, where 
 $E\to M$ is an $h^*$-oriented (orthogonal) vector bundle of rank $r$,
 $\pi: S(E\oplus 1)\to M$ denotes the sphere bundle, and
 $\sigma_!: h^* (M) \to h^{*+r} (S(E\oplus 1))$ is the
 cohomological Gysin homomorphism induced by the nowhere zero
 section $\sigma: M \to S(E\oplus 1)$ induced by the map $\sigma (m)=(0_m,1_m)$.
\end{enumerate}

Note that the replacement in a vector bundle modification
may change the dimension of the manifold and it should therefore
be clarified in which sense $h'_*$ is graded.
In fact the group is naturally $\intg$-graded by defining
the homogeneous elements of degree $\K\in \intg$ to be those cycles
$[(M,u(\nu_M),x,f)]$ for which $x|_{M_j}$ is homogeneous and
$\dim M_j - \deg (x|_{M_j}) = \K$ for all $j$, where the $M_j$ are the 
connected components of $M$.
Jakob shows that the natural transformation
\[ \varphi: h'_* (-,-) \longrightarrow h_* (-,-) \]
given by
\[ \varphi [M,u(\nu_M),x,f] := f_* (x \cap [M,\partial M]_h) \]
is an isomorphism on the category of pairs of spaces which have
the homotopy type of finite CW pairs 
(\cite[Theorem 2.3.3.]{Jak:BDH}). \\

For later use in proving Proposition \ref{prop.kappawelldefined}, we record a
base change formula for the cohomological Gysin homomorphism
$\sigma_!$:
\begin{lemma} \label{lem.basechangecohomgysin}
Let
\[ \xymatrix{
E \ar[d] \ar[r]^F & E' \ar[d] \\
M \ar[r]_f & M' 
} \]
be a cartesian diagram, where $f$ is a continuous map between smooth manifolds
and the vertical arrows are the projections of vector bundles.
Suppose that $E'$ is $h$-oriented and $E$ is given the induced orientation.
Let $\sigma': M' \to S' := S(E' \otimes 1)$ and
$\sigma: M \to S := S(E \otimes 1)$ be the sections described above so that the
diagram
\[ \xymatrix{
S  \ar[r]^F & S'  \\
M \ar[u]^\sigma \ar[r]_f & M' \ar[u]_{\sigma'}
} \]
commutes. Then the base change formula
\[ \sigma_! \circ f^* = F^* \circ \sigma'_!: h^* (M') \to h^{*+r} (S) \]
holds.
\end{lemma}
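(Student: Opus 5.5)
The plan is to write $\sigma_!$ explicitly as a composite of three natural maps and check that each commutes with pullback along the cartesian square. For the section $\sigma: M\to S=S(E\oplus 1)$, its normal bundle is canonically $E$: near $\sigma(M)$, the sphere bundle is the graph of the unit-sphere coordinates $(v,\sqrt{1-|v|^2})$, so a tubular neighbourhood $U\subset S$ of $\sigma(M)$ is identified with the open disc bundle $D^\circ E$. Under this identification,
\[ \sigma_!(x) \;=\; \iota^*\bigl(\,p^*x\cup u(E)\,\bigr), \]
where the ingredients are as follows:
\begin{itemize}
\item $p: D E\to M$ is the projection;
\item $u(E)\in h^r(DE,SE)$ is the Thom class;
\item $\iota^*: h^{*+r}(DE,SE)\cong h^{*+r}(S, S\setminus U)\to h^{*+r}(S)$ is the collapse, i.e.\ excision followed by restriction.
\end{itemize}
The same description holds for $\sigma'$, with $E'$, $S'$, $U'$ and $u(E')$ in place of $E$, $S$, $U$ and $u(E)$.

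First, because the square is cartesian, $F$ is a fiberwise linear isometry on each fiber, with $E=f^*E'$. Hence $F$ extends to $F:S(E\oplus1)\to S(E'\oplus 1)$, which is the map $S\to S'$ of the statement; the extension is simply $F\oplus\id$ on fibers. This map satisfies $F\circ\sigma=\sigma'\circ f$. We choose the tubular neighbourhoods canonically, as the open upper hemisphere bundles $U=\{(v,t): t>0\}$ and $U'$ analogously. Then $F^{-1}(U')=U$ and $F^{-1}(S'\setminus U')=S\setminus U$, so $F$ is a map of pairs $(S,S\setminus U)\to(S',S'\setminus U')$. Moreover, $F$ is compatible with the identifications $U\cong D^\circ E$ and $U'\cong D^\circ E'$.

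Second, naturality then holds piece by piece:
\begin{itemize}
\item Pullback commutes with cup product, and $F^*p'^*=p^*f^*$.
\item The induced orientation is by definition $u(E)=F^*u(E')$, as the pullback of the Thom class along the bundle map of disc bundles.
\item Restriction and excision isomorphisms commute with $F^*$, since $F$ is a map of the relevant pairs and triples.
\end{itemize}
Chaining these gives
\[ F^*\sigma'_!(x')=F^*\iota'^*(p'^*x'\cup u(E'))=\iota^*(p^*f^*x'\cup u(E))=\sigma_!(f^*x'). \]

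The only step requiring care is the first one. We must make sure that the Gysin map $\sigma_!$ in Jakob's vector bundle modification is indeed given by this Thom-class-and-collapse formula, with the normal orientation induced from $u(E)$ rather than some other convention. We also need the choice of tubular neighbourhood to be strictly compatible with $F$, and not merely up to isotopy. Since $\sigma_!$ is independent of the choice of tubular neighbourhood, choosing the canonical hemisphere neighbourhoods removes the issue. The remaining verifications are formal naturality statements for cup products and excision in a multiplicative cohomology theory.
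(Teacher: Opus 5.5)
Your proposal is correct and is essentially the paper's proof. The paper also writes $\sigma_!$ as the composite of three maps: the Thom isomorphism $-\cup u$ into $h^{*+r}(D^+E,SE)$, then excision to $h^{*+r}(S,D^-E)$, then restriction to $h^{*+r}(S)$, using $S=D^+E\cup_{SE}D^-E$; your $S\setminus U$ is exactly $D^-E$. It then checks the three resulting squares by naturality of the induced Thom class and by compatibility of the inclusions with $F$ and $F\oplus 1$, which is what your hemisphere-bundle choice makes strict.
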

\begin{proof}
We think of $S$ as the union $S= D^+ E \cup_{SE} D^- E$
of two copies of the disc bundle $DE$ glued along their common
boundary $SE$; similarly for $S'$.
Let $u$ and $u'$ be the Thom classes of $E$ and $E'$ in $h$-cohomology.
The two Gysin homomorphisms are given by the vertical compositions
in the following diagram:
\[ \xymatrix{
h^{*+r} (S) & h^{*+r} (S') \ar[l]_{(F\oplus1)^*} \\ 
h^{*+r} (S, D^- E) \ar[u]^{\operatorname{res}} & h^{*+r} (S') 
   \ar[u]_{\operatorname{res}} \ar[l]_{(F \oplus 1)^*} \\
 h^{*+r} (D^+ E, SE) \ar[u]^{\operatorname{exc}}_\cong & h^{*+r} (S') 
   \ar[u]_{\operatorname{exc}}^\cong \ar[l]_{F^*} \\
h^{*} (M) \ar[u]^{-\cup u}_\cong & h^{*} (M') \ar[u]_{-\cup u'}^\cong \ar[l]_{f^*}
} \]
The lower vertical arrows are thus Thom isomorphisms,
the middle vertical arrows are excision isomorphisms,
and the upper vertical arrows are restriction homomorphisms.
The bottom square commutes by naturality of the Thom class,
since the orientation of $E$ is induced by the one of $E'$.
The middle and top square commute, as the vertical maps there
are all induced by inclusions that are compatible with $F$ and $F\oplus 1$.
\end{proof}

\medskip
In this article, we are primarily interested in the two cohomology theories 
$h^* (-) = \K^* ( - )$ and
$h^* (-) = \K^* ( - ) [\tfrac12]$; we denote the  above bordism-like description $h'_* (-)$ of the 
corresponding homology theories as 
$$\K^b_* ( - ) \quad\quad \text{and}\quad\quad \K^{b/2}_* ( - )$$
respectively, where {\bf $b$ stands for {\em bordism}}.

\medskip
We first concentrate on $h^*:= \K^* ( - )[\tfrac12]$,
since this theory will be our main concern. So, let 
$\K^{b/2}_* ( - )$ denote the bordism-like description of $\K_*^\topo ( - )[\tfrac12]$ according to Jakob.
It is important to point out explicitly that {\em $\K^{b/2}_* ( - )$ is  a homology theory}.

Let $(X,A)$ be a pair of topological spaces homotopy equivalent to a finite CW pair. 
Specializing what we have just seen in general, elements in $\K^{b/2}_* (X,A)$ 
are given by equivalences classes of quadruples 
$[(M,\partial M),\Delta_\cplx (\nu_M), x, f:(M,\partial M) \to (X,A)]$ 
with $M$ a smooth compact ($H\intg$-)oriented  manifold, $x\in \K^* (M)[\tfrac12]$ and $\nu_M$ the stable normal bundle of  $M$, which is $\K\smlhf$-oriented by 
$\Delta_\cplx (\nu_M)$, coming from
the Sullivan class of $\nu_M$.  
See Section \ref{sect-siegel-sullivan} above for $\Delta_\cplx (\nu_M)$.
Notice that the notation adopted here is more precise than the one 
in \cite{Jak:BDH}, in that we put the choice of a $\K\smlhf$-orientation  for $\nu_M$, that is $\Delta_\cplx (\nu_M)$,   into the notation.

We consider the free abelian group generated by these cycles and, as already explained,  we first mod out so as to have additivity with respect to
disjoint unions of ($H\intg$-)oriented  manifolds and with respect to sums of classes in 
$\K^* ( - )[\tfrac12]$; we then consider the  equivalence relation on quadruples  
generated by  bordism and vector bundle modification. Regarding the latter we consider  
a real vector bundle $V$ which is ($H\intg$-)oriented; such a bundle is 
$\K[1/2]$-oriented by using $\Delta_\cplx (V)$. As already explained, the  sphere bundle 
$\pi: S:= S(V\oplus 1) \to M$ has  a section $\sigma: M \to S$.
Notice that the sphere bundle $S$ is oriented since $M$ and $V$ are and so possesses a
$\K\smlhf$-orientation.
We  consider $\sigma_! (x)$ with  
$\sigma_! : \K^* (M)[\tfrac12] \to \K^{* + \rk V} (S(V\oplus 1))[\tfrac12]$ 
the Gysin map in cohomology induced by $\sigma$ and declare that 
$$(M,\Delta_\cplx (\nu_M), x, f:(M,\partial M) \to (X,A)) \sim (S(V\oplus 1), 
\Delta_\cplx (\nu_{S(V\oplus 1)}),\sigma_! (x), f\circ \pi: (S(V\oplus 1),\partial) \to (X,A))$$
where we recall that  $S(V\oplus 1)\xrightarrow{\pi} M$ is the natural projection. 
We obtain in this way $\K^{b/2}_* (X)$
and, as already stated, there exists an explicit  isomorphism 
$$\varphi : \K^{b/2}_* (X,A)\to \K^\topo_* (X,A)[\tfrac12]$$
given by
\begin{equation} \label{equ.defjakobvarphikbhalf}
\varphi [M,\Delta_\cplx (\nu_M),x,f] := f_* (x \cap \Delta_\cplx(M,\partial M)). 
\end{equation}

\medskip
Next we consider the generalized cohomology theory given by $h^*:= \K^* $.
A vector bundle is $\K$-\emph{orientable} if and only if it has a 
$\Spin^c$ structure. Hence the homology theory associated 
(by means of a representing spectrum) to the multiplicative cohomology theory
$\K^*_\topo$ may be described using geometric cycles where $M$ is
equipped with a $\Spin^c$ structure.
The $\K^*$-Thom class of $\nu_M$ is then the Atiyah-Bott-Shapiro orientation
$u_{ABS} (\nu_M) \in \widetilde{\K}^0 (\Th \nu_M)$, see \cite[\S 12]{AtiBotSha:CM}.
The corresponding $\K$-homological fundamental class will be written as
$[M,\partial M]_{ABS} \in \K_n (M,\partial M)$.
Altogether, as pointed out by Jakob, $\K^b_* (X)$ has a description
based on cycles of the form $[(M,u_{ABS} (\nu_M),x,f)]$, where $M$ is a compact $\Spin^c$
manifold, $x\in \K^* (M)$, 
$u_{ABS} (\nu_M)$ is the Atiyah-Bott-Shapiro orientation and $f$ is a continuous map from 
$(M,\partial M)$ to $(X,A)$.
According to Bott periodicity,
this group has a natural $\intg/_2$-grading.
The isomorphism 
\[ \varphi: \K^{b}_* (X,A) \longrightarrow \K^\topo_* (X,A) \]
is given by
\[ \varphi [M,u_{ABS} (\nu_M),x,f] := f_* (x \cap [M,\partial M]_{ABS}). \]

\subsection{The homomorphism $\kappa$ to analytic K-homology}
Let $M$ be an oriented compact smooth manifold of dimension $n$.
Let us recall the notation
\begin{equation}\label{notation-sign}
\sign_K (M):= 2^{-\lfloor n/2 \rfloor} [D^{{\rm sign}}_{M}]\in \K_n^\an (M)[\smlhf].
\end{equation}
If $x\in \K^0 (M)[\smlhf]$ then we can twist the signature operator by $x$,
obtaining  an element $[D^{{\rm sign}}_{M,x}]\in \K_*^\an (M)[\smlhf]$; this is done as follows: 
if $x=[E]\otimes \alpha,$ with $\alpha \in \bb Z[\tfrac12],$ then 
$[D^{{\rm sign}}_{M,x}]:= [D^{{\rm sign}}_{M,E}]\otimes \alpha$. 
We define 
\begin{equation}\label{notation-sign-bis}
\sign_K (M,x):= 2^{-\lfloor n/2 \rfloor} [D^{{\rm sign}}_{M,x}]\in \K_*^\an (M)[\smlhf].
\end{equation}

\noindent
We define a group homomorphism  
$$\kappa^{b/2}: \K^{b/2}_*  (X,A) \to \K^\an_* (X,A)[\smlhf]$$ on an element
 $(M^n,\Delta_\cplx (\nu_M), x, f:M\to X)$ as follows:
\begin{enumerate}
\item  if $x\in \K^0 (M)[\smlhf],$ then 
$\kappa^{b/2} ([M^n,\Delta_\cplx (\nu_M), x, f:M\to X]):= 
f_* (\sign_K (M,x))\,;$
\item if $x\in \K^1 (M)[\smlhf],$ we apply a suspension-technique; 
we consider the trivial bundle $V:=M\times \mathbb{R}$ and then $S(V\oplus 1)$, which is nothing but $M\times S^1$, equipped with the first-factor projection $\pi_1: S(V\oplus 1)\to M$; we then take $(S(V\oplus 1), \Delta_\cplx (\nu_{S(V\oplus 1)}), \sigma_! (x), f\circ \pi_1),$
where we observe that by Bott periodicity, the element $\sigma_! (x)\in \K^0 (S(V\oplus 1))[\smlhf]$
can be represented by (a difference of) vector bundles;
we can then set 
\begin{align*}\kappa^{b/2} ([M,\Delta_\cplx (\nu_M), x, f:M\to X])&:= (f\circ \pi_1)_*  (\sign_K (M\times S^1,\sigma_! (x)))\\
&\equiv 2^{-\lfloor (n+1)/2 \rfloor}
 (f\circ \pi_1)_* [D^{{\rm sign}}_{S(V\oplus 1),\sigma_! (x)}]
 \end{align*}
\end{enumerate}

\medskip
\noindent
{\bf Notation.} Unless absolutely necessary, we shall denote the homomorphism $\kappa^{b/2}$ simply by $\kappa$.

\begin{prop} \label{prop.kappawelldefined}
The assignment $\kappa$ 
induces a well-defined homomorphism
\[ \kappa: \K^{b/2}_*  (X,A) \longrightarrow \K_*^\an (X,A)[\smlhf]. \]
\end{prop}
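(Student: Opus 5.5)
We have to check that $\kappa$ respects each relation defining $\K^{b/2}_*(X,A)$: diffeomorphism, additivity, disjoint union, bordism, and vector bundle modification. The case $x\in \K^1$ was defined through the $S^1$-suspension, which is itself a vector bundle modification. So once $\kappa$ is shown invariant under vector bundle modification on even classes, the odd case is consistent with everything else and reduces formally to the even case.

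\emph{Step 1: the formal relations.} Invariance under orientation-preserving diffeomorphisms that respect $x$ follows from diffeomorphism invariance of the class of the signature operator, as recalled in Part 1 and used in the proof of Lemma \ref{lem.gysinandcap}. Additivity in $x$ holds because $[D^\sign_{M,E\oplus F}]=[D^\sign_{M,E}]+[D^\sign_{M,F}]$; compatibility with the $\ism$-linear extension is built into the definition of $[D^\sign_{M,x}]$. Disjoint union is clear, since the signature operator of $M\sqcup N$ is the direct sum of the two operators and $f_*$ is additive. Independence of the choice of connection and metric is standard.

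\emph{Step 2: bordism.} Let $(W,u_W,y,g)$ be a bordism with $\partial W\supset M_1\sqcup M_2$, where the rest of $\partial W$ maps into $A$. I would use the boundary map in relative analytic K-homology: the class of the twisted signature operator on $W$ (a class in $\K^\an_*(W,\partial W)[\smlhf]$, i.e.\ a class for $C_0(W^\circ)$) is sent by $\partial$ to the class of $\partial W$, twisted by the restriction of $y$, up to a power of $2$ and a sign. This is the standard ``boundary of Dirac is Dirac'' result (Baum--Douglas--Taylor, Higson--Roe), adapted to the signature operator: the boundary of $D^\sign_W$ is $D^\sign_{\partial W}$ in odd dimension and twice it in even dimension. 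The normalisation $2^{-\lfloor n/2\rfloor}$ is designed exactly to absorb this factor, and I would verify this bookkeeping with care. Naturality of $\partial$ and exactness of the long exact sequence for $(X,A)$ then show that $g_*$ of the boundary class vanishes in $\K^\an_*(X,A)[\smlhf]$. Hence $(f_1)_*\sign_K(M_1,x_1)=(f_2)_*\sign_K(M_2,x_2)$, with the sign convention $u_{M_i}=(-1)^iu_W|_{M_i}$ matching the orientation reversal. For relative cycles, where $\partial M\neq\varnothing$, one uses the class in $\K^\an(M,\partial M)$ in the same way.

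\emph{Step 3: vector bundle modification, the main obstacle.} We must show
\[
(f\circ\pi)_*\,\sign_K(S,\sigma_!x)=f_*\,\sign_K(M,x)
\]
with $S=S(V\oplus1)$. The plan is as follows. First, by \eqref{lower-shriek-oriented}, $\sigma_!x=x\otimes\Sigma(\sigma)$ is the analytic lower shriek for the oriented embedding $\sigma$; this requires identifying the topological Gysin map defined with $\Delta_\cplx(V)$ with the analytic one, which I would take from the compatibility $\Delta_\cplx\leftrightarrow\sign_K$ of Part 1 applied to Thom classes. Next, Proposition \ref{prop-boardman} gives
\[
[[\sigma_!x]]\otimes\sign_K(S)=\sigma_*\bigl([[x]]\otimes\sigma^!\sign_K(S)\bigr).
\]
By Hilsum's result \eqref{equ.hilsumgysinrestrofsignk}, $\sigma^!\sign_K(S)=\sign_K(M)$. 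Applying $(f\circ\pi)_*$ and using $\pi\circ\sigma=\id_M$ yields $f_*([[x]]\otimes\sign_K(M))=f_*\sign_K(M,x)$, by \eqref{equ.dsignmeeprime}. The left side is $(f\circ\pi)_*\sign_K(S,\sigma_!x)$, again by \eqref{equ.dsignmeeprime}. For the relative case, Lemma \ref{lem.basechangecohomgysin} ensures compatibility of $\sigma_!$ with restriction to the boundary.

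The delicate point is the identification of the topological $\sigma_!$, defined via Sullivan's Thom class, with right multiplication by Hilsum's $\Sigma(\sigma)$ after $2$ is inverted, including the powers of $2$. This is where I expect the real work to lie.
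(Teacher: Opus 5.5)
Your Step 3 is the paper's Case 1. Your bordism step is essentially the paper's argument: the paper uses the untwisted Rosenberg--Weinberger formula $j_*\partial_*\sign_K(W,M)=0$ and moves the twist across with Proposition \ref{prop.kaspmultcapnaturality}. The paper also uses tacitly the identification you single out, of the Thom-class Gysin map $\sigma_!$ with $x\otimes\Sigma(\sigma)$.

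The genuine gap is your claim that the odd-degree cases ``reduce formally'' to invariance under modifications of even cycles. Take $x\in\K^1(M)[\smlhf]$ and $V$ of odd rank, and let $\tau\colon M\to M\times S^1$ be the base-point section. Then $\kappa[M,x,f]$ is computed from the even cycle $(M\times S^1,\tau_!x,f\circ\pi_1)$. By contrast, $\kappa[S,\sigma_!x,f\circ\pi]$ is computed directly from the even cycle $(S(V\oplus1),\sigma_!x,f\circ\pi)$. These are two different modifications of the same \emph{odd} cycle, and in general neither is a modification of the other. So invariance of $\kappa$ under modifications of even cycles does not relate them.

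The mixed case ($x$ even, $V$ odd) is not an instance of your Step 3 as stated either. There the right-hand side is evaluated on $S\times S^1$ with the class $\tau'_!\sigma_!x$, reached through an odd intermediate stage. Moreover $S\times S^1\to M$ has fibre $S^{\rk V}\times S^1$, so it is not a sphere bundle $S(W\oplus1)$. Bordism and the case $x$ odd, $V$ even do reduce as you say, via Lemma \ref{lem.basechangecohomgysin} and $S(V\oplus1)\times S^1=S(\pi_1^*V\oplus1)$.

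Two ingredients are missing.

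First, formulate Step 3 for an arbitrary oriented smooth section $\omega$ of a smooth fibre bundle projection $p$, with $\omega_!x$ of degree $0$. Your argument (Proposition \ref{prop-boardman}, \eqref{equ.hilsumgysinrestrofsignk}, $p\circ\omega=\id$) proves this verbatim. It then settles the mixed case with $\omega=\tau'\circ\sigma$ and $p=\pi\circ\pi_1'$.

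Second, for $x$ and $V$ both odd you need a common even refinement. The paper uses $S\times T^2$, which lies over $S$ and over $M\times S^1$:
over $S$ via $p_1$, with section $s(w)=(w,a,a)$;
over $M\times S^1$ via $\widetilde\pi(w,t_1,t_2)=(\pi(w),t_1)$, with section $\widetilde\sigma(q,t)=(\sigma(q),t,a)$.
Since $\widetilde\sigma\circ\tau=s\circ\sigma$ and $\pi\circ p_1=\pi_1\circ\widetilde\pi$, the degree-$0$ classes $\sigma_!x$ and $\tau_!x$ push to the same class $s_!\sigma_!x=\widetilde\sigma_!\tau_!x$ on $S\times T^2$. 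This class is again of degree $0$, because $s$ has codimension $2$ and $\widetilde\sigma$ has even codimension $\rk V+1$. Applying the generalized Step 3 to $(s,p_1)$ and to $(\widetilde\sigma,\widetilde\pi)$ gives $\pi_*\sign_K(S,\sigma_!x)=\pi_{1*}\sign_K(M\times S^1,\tau_!x)$. Applying $f_*$ yields the required equality.
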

\begin{proof}
The suspension technique has been introduced so as to pass from $\K^1 (-)[\smlhf]$ to $\K^0 (-)[\smlhf]$ 
in a quadruple representing an element in $\K^{b/2}_*  (X)$. Of course we could have achieved
this by considering the bundle $M\times \mathbb{R}^{2k+1}\to M$ or 
in fact any oriented
odd-rank vector bundle $W\to M$, instead
of the trivial line bundle $V\equiv M\times\mathbb{R}$, and then consider $S(W\oplus 1)\xrightarrow{p} M$,
with section $\tau: M\to S(W\oplus 1)$. We claim that 
$$ (f\circ \pi_1)_* \sign_K (S(V\oplus 1),\sigma_! (x))
 = (f\circ p)_* \sign_K (S(W\oplus 1),\tau_! (x))\,,$$
where $x\in \K^1 (M)[\smlhf]$.

Let us briefly write $S$ for $S(W \oplus 1)$.
We choose and fix a base point $a\in S^1$.
Then $\sigma (q) := (q,a)$ defines a map
$\sigma: M \to M\times S^1 = S(V \oplus 1)$, which is a section
for the first factor projection $\pi_1: M\times S^1 \to M$.
The oriented manifold $S\times T^2$, where $T^2 = S^1 \times S^1$ denotes
a $2$-torus, lies over $S$ via the projection
$p_1: S \times T^2 \to S,$ $p_1 (w,t_1,t_2) := w,$
$w\in S,$ $(t_1, t_2) \in S^1 \times S^1 = T^2$.
It also lies over $M \times S^1$ via the projection
$\widetilde{\pi}: S \times T^2 \to M \times S^1$ given by
$\widetilde{\pi} (w,t_1,t_2) := (p(w),t_1)$.
Then the diagram of projections
\begin{equation} \label{dia.projnsst2}
\xymatrix{
M\times S^1 \ar[d]_{\pi_1} & S \times T^2 \ar[l]_{\widetilde{\pi}} 
   \ar[d]^{p_1} \\
M & S \ar[l]^p
} \end{equation}
commutes, as the calculation
\[ \pi_1 \widetilde{\pi} (w, t_1, t_2) 
   = \pi_1 (p(w), t_1) = p(w) = p p_1 (w, t_1, t_2) \]
shows.
The map $s: S \to S\times T^2$ given by
$s(w) := (w,a,a)$ is a section for $p_1$, since
$p_1 s (w) = p_1 (w,a,a) = w$.
The map $\widetilde{\tau}: M \times S^1 \to S \times T^2$
given by $\widetilde{\tau} (q,t) := (\tau (q), t, a)$
is a section for $\widetilde{\pi},$ since
\[ \widetilde{\pi} \widetilde{\tau} (q,t)
 = \widetilde{\pi} (\tau (q), t, a)
 = (p(\tau (q)), t) = (q,t). \]
The diagram of sections
\begin{equation} \label{dia.sectionsst2}
\xymatrix{
M\times S^1 \ar[r]^{\widetilde{\tau}} & S \times T^2  \\
M \ar[u]^{\sigma} \ar[r]_\tau & S \ar[u]_s
} \end{equation}
commutes, as the calculation
\[ \widetilde{\tau} \sigma (q)
  = \widetilde{\tau} (q,a)
  = (\tau (q), a, a)
  = s \tau (q)
 \]
shows. All of these sections are smooth embeddings of oriented
manifolds and thus have associated Gysin restriction homomorphisms.
Now,
\[ p_* \sign_K (S, \tau_! (x))
   =  p_* ([[\tau_! (x)]] \otimes \sign_K (S)), \]
since $\tau_! (x)$ lies in degree $0$ and is thus represented by a 
virtual vector bundle so that we may use (\ref{equ.dsignmeeprime}).
Using the (smooth) Gysin restriction formula (\ref{equ.hilsumgysinrestrofsignk}),
$s^! \sign_K (S \times T^2) = \sign_K (S)$, as $s$ is a smooth oriented
embedding. Consequently,
\[ p_* ([[\tau_! (x)]] \otimes \sign_K (S))
  = p_* p_{1*} s_* ([[\tau_! (x)]] \otimes s^! \sign_K (S \times T^2)). \]
Using (\ref{boardman-analytic-pre}) in Proposition \ref{prop-boardman},
\[ p_* p_{1*} s_* ([[\tau_! (x)]] \otimes s^! \sign_K (S \times T^2))
  = p_* p_{1*} ([[s_! \tau_! (x)]] \otimes \sign_K (S \times T^2)). \]
By the commutativity of diagrams
(\ref{dia.projnsst2}) and (\ref{dia.sectionsst2}),
\begin{align*}
p_* p_{1*} ([[s_! \tau_! (x)]] \otimes \sign_K (S \times T^2)) 
 &= \pi_{1*} \widetilde{\pi}_* ([[\widetilde{\tau}_! \sigma_! (x)]] 
     \otimes \sign_K (S \times T^2)) \\
 &= \pi_{1*} \widetilde{\pi}_* \widetilde{\tau}_*
     ([[\sigma_! (x)]] 
     \otimes \widetilde{\tau}^! \sign_K (S \times T^2)) \\
&= \pi_{1*} ([[\sigma_! (x)]] 
     \otimes \sign_K (M \times S^1)) \\
 &= \pi_{1*} \sign_K (M \times S^1, \sigma_! (x)).            
\end{align*}
Here, we have used that $\sigma_! (x)$ also lies in degree $0$
so that (\ref{equ.dsignmeeprime}) and (\ref{boardman-analytic-pre})
are applicable.
The claim follows by applying $f_*$ to both sides of the equation.

Next we verify that $\kappa$ is compatible with the relation of spherical bundle modification.
Let $[M, \Delta_\cplx (\nu_M), x, f: M\to X]$ be an element of $\K^{b/2}_*  (X)$,
 and let $V\to M$ be an ($H\intg$-) oriented vector bundle, 
 canonically $\K \smlhf$-oriented by $\Delta_\cplx (V)$.
In $\K^{b/2}_*  (X)$, the relation
\[ [M, \Delta_\cplx (\nu_M), x, f] = [S, \Delta_\cplx (\nu_S), \sigma_! (x), f \circ \pi ] \]
holds by definition, and so we must show that the images in $\K^\an_* (X)$
under $\kappa$ of these two representative cycles are equal. There are 4 cases to be considered,
depending on parities of the degree of $x$ and of the rank of $V$.\\
{\bf Case 1:} $x\in \K^0 (M)[\smlhf]$ and $V$ is even rank.\\
In this case, both $x$ and $\sigma_! (x)$ are represented by vector bundles and so
we apply (1) in the above definition of $\kappa$; so, we must show that
\begin{equation}\label{proof-b-mod}
  f_* \sign_K (M,x) 
  =  f_* \pi_* \sign_K (S, \sigma_! (x)). 
 \end{equation}
We have proved the following analytic counterpart of a result of Boardman \cite[p. 35, 6.2.(d)]{boardman},
see Proposition \ref{prop-boardman}:
 \begin{equation}\label{boardman-analytic}[[\sigma_! (x)]] \otimes \sign_K (S)
  = \sigma_* ([[x]] \otimes \sigma^! \sign_K (S)).\end{equation}
  Thus
  \begin{align*}
 \pi_* \sign_K (S, \sigma_! (x))
 &= \pi_* ([[ \sigma_! (x)]] \otimes \sign_K (S)) \\
 &= \pi_*  \sigma_* ([[ x ]] \otimes  \sigma^! \sign_K (S)) \\
 &= [[ x ]] \otimes \sign_K (M) \\
 &=  \sign_K (M,x).
\end{align*}
where we have used  that $\pi \circ \sigma = \id$ and that $ \sigma^! \sign_K (S)=\sign_K (M)$
Thus \eqref{proof-b-mod} holds.\\
{\bf Case 2:} $x\in \K^0 (M)[\smlhf]$ and $V$ is odd rank.\\
In this case, the bundle modification $[S, \Delta_\cplx (\nu_S), \sigma_! (x), f \circ \pi ]$
will have $\sigma_! (x)\in \K^1 (S)[\smlhf]$ and so the definition of $\kappa$ on the right hand side involves suspension, see (2).
Let $\sigma: M \to S=S(V \oplus 1)$ denote the section for $\pi: S(V\oplus 1) \to M$,
and 
let $\tau: S \to S \times S^1$ denote the section for $\pi_1: S \times S^1 \to S$.
In the desired equality
\[ \kappa [M,\Delta,x,f] = \kappa [S,\Delta, \sigma_! (x), f\circ \pi], \]
the left hand side is given by (1) of the definition of $\kappa$,
whereas the right hand side is given by (2).
Hence we must show that
\[ f_* \sign_{K}(M,x) = f_* \pi_* \pi_{1*} \sign_{K}(S\times S^1, \tau_! \sigma_! (x)).  \]
Let $p := \pi \circ \pi_1: S \times S^1 \to M$ denote the composition of the
projections and
let $\omega := \tau \circ \sigma: M \to S \times S^1$ denote the composition
of the sections.
Then
\begin{align*}
f_* \pi_* \pi_{1*}\sign_{K}(S\times S^1, \tau_! \sigma_! (x))
&= f_* p_* ([[\omega_! (x)]] \otimes \sign_{K}(S\times S^1)) \\
&= f_* p_* \omega_* ([[x]] \otimes \omega^! \sign_{K}(S\times S^1)) \\
&= f_* p_* \omega_* ([[x]] \otimes \sign_K(M])) \\
&= f_* ([[x]] \otimes \sign_K (M])) \\
&= f_* \sign_{K}(M,x). 
\end{align*}

\noindent
{\bf Case 3:} $x\in \K^1 (M)[\smlhf]$ and $V$ is even rank.\\
In this case,  $x\in \K^1 (M)[\smlhf]$ and $\sigma_! (x) \in \K^1 (S)[\smlhf]$ and so suspension is needed 
in order to define $\kappa$ on both the left and right hand side of the equation. Since we suspend 
on both sides, the proof reduces easily to Case 1 with $M' = M \times S^1$
and $x' = \tau_! (x) \in \K^0 (M')[\smlhf],$ where $\tau: M \to M\times S^1$ is the appropriate section
for $\pi_1: M\times S^1 \to M$.\\
{\bf Case 4:} $x\in \K^1 (M)[\smlhf]$ and $V$ is odd rank.\\
In this case $\sigma_! (x)$ is in $\K^0 (S)[\smlhf]$ and so while we need to use suspension 
for the left hand side, we do not need to use it for the right hand side. 
Thus we have, using (2) of the definition of $\kappa$,
\[ \kappa [M, \Delta_\cplx (\nu_M), x, f]
   = f_* \pi_{1*} \sign_K (M \times S^1, \tau_! (x)), \]
where $\tau: M \to M \times S^1$ is the section $\tau (q) = (q,a)$
of the factor projection $\pi_1: M\times S^1 \to M$ given by embedding
at a base point $a \in S^1$.
Using branch (1) of the definition of $\kappa$,
\[ \kappa [S, \Delta_\cplx (\nu_S), \sigma_! (x), f \circ \pi ]
   = f_* \pi_* \sign_K (S, \sigma_! (x)), \]
where $\sigma: M \to S$ is the section
of the given projection $\pi: S \to M$ given by using
a nonzero point on the trivial line in $V \oplus 1$.
Therefore, we must prove that
\[ 
   f_* \pi_{1*} \sign_K (M \times S^1, \tau_! (x))
   = f_* \pi_* \sign_K (S, \sigma_! (x)).
\]
This can be done precisely as in the beginning of the proof, 
using the manifold $S \times T^2$
and diagrams of the type 
(\ref{dia.projnsst2}) and (\ref{dia.sectionsst2}).
One should merely note that also in the present context,
both $\sigma_! (x)$ and $\tau_! (x)$ have degree $0$,
so that  (\ref{equ.dsignmeeprime}) and (\ref{boardman-analytic-pre})
are indeed applicable. This finishes case 4, and thus
the proof that $\kappa$ remains invariant under spherical bundle
modification.

Let us now verify that $\kappa$ is invariant under bordism.
Thus, suppose a cycle $(M, \Delta_\cplx (\nu_M), x, f)$ is nullbordant.
For simplicity, let us discuss the case $\partial M = \varnothing$.
Thus there is a nullbordism
 $(W, \Delta_\cplx (\nu_W), y, g)$ such that $M =\partial W$, the $\K\smlhf^*$-orientation of 
 $\nu_M$ is induced by the one on $\nu_W,$
 and $y|_M = x,$ $g|_M = f$. We need to verify that 
 $\kappa (M, \Delta_\cplx (\nu_M), x, f)=0$.
 Let assume first that $x\in \K^0 (M) [\smlhf]$, so that we are in branch (1)
 of the definition.
Let $j: M= \partial W \to W$ denote the inclusion of the boundary,
so that $f = g \circ j$.
Then
\begin{align*}
f_* \sign_K (M,x)
&= f_* ([[x]] \otimes \sign_K (M)) \\
&= g_* j_*  ([[j^* (y)]] \otimes \sign_K (M)) \\
&= g_* ([[y]] \otimes j_* \sign_K (M)) \\
&= g_* ([[y]] \otimes j_* \partial_* \sign_K (W,M)) \\
&= g_* ([[y]] \otimes 0) \\
&= 0.
\end{align*} 
Here, we have used (\ref{equ.dsignmeeprime})  
(which is available since $x$ has degree $0$),
the functoriality Proposition \ref{prop.kaspmultcapnaturality}, 
the connecting homomorphism
$\partial_*: \K_{n+1} (W,M) \to \K_n (M)$ satisfying
$j_* \circ \partial_* =0$, and
 \cite[Thm. 2, p. 49]{RosWei:SO}, see also \cite[Prop. 4.4]{Part1}.
Now suppose that $x\in \K^1 (M) [\smlhf]$, so that branch (2)
of the definition is active. 
We have to show that $f_* \pi_{1*} \sign_K (M \times S^1, \sigma_! (x))$
vanishes.
By the previous argument, it suffices to construct a nullbordism
for $(M\times S^1, \Delta_\cplx, \sigma_! (x), f\circ \pi_1)$.
Such a nullbordism is given by
$(W \times S^1, \Delta_\cplx, \sigma'_! (y), g\circ \pi_1)$, 
where $\sigma': W \to W \times S^1$ is the section given by
embedding at the base point of $S^1$,
since by Lemma \ref{lem.basechangecohomgysin}
\[ (j\times \id)^* (\sigma'_! (y)) =
   \sigma_! (j^* y) = \sigma_! (x). \]
\end{proof}

We end this subsection with a discussion of the analogous map 
$\kappa^b: \K^b_*  (X,A) \to \K^\an_* (X,A)$, with cycles on the left hand side given in terms of 
$\Spin^c$ manifolds.\\
We define a group homomorphism  
$$\kappa := \kappa^b: \K^b_*  (X,A) \to \K^\an_* (X,A)$$ on an element
 $(M^n, u_{ABS} (\nu_M), x, f:M\to X),$ where $M$ is a $\Spin^c$-manifold,
 as follows: 
\begin{enumerate}
\item  if $x\in \K^0 (M),$ then 
$\kappa^b ([M^n,u_{ABS} (\nu_M), x, f:M\to X]):= 
    f_* [D^{\Spin^c}_{M,x}]\,;$ 
\item if $x\in \K^1 (M),$ we again apply a suspension-technique; 
we consider the trivial $\Spin^c$ bundle $V:=M\times \mathbb{R}$ 
and then $S(V\oplus 1)$, which is nothing but the $\Spin^c$ manifold $M\times S^1$, equipped with the first-factor projection $\pi_1: S(V\oplus 1)\to M$; we then take 
 $(S(V\oplus 1), u_{ABS} (\nu_{S(V\oplus 1)}), \sigma_! (x), f\circ \pi_1),$
where we observe that by Bott periodicity, the element $\sigma_! (x)\in \K^0 (S(V\oplus 1))$
can be represented by (a difference of) vector bundles;
we can then set 
$$\kappa^b ([M, u_{ABS} (\nu_M), x, f:M\to X]):= 
 (f\circ \pi_1)_* [D^{\Spin^c}_{S(V\oplus 1),\sigma_! (x)}].$$
\end{enumerate}
\begin{prop}\label{prop:kappawell}
The assignment $\kappa^b$ 
induces a well-defined homomorphism
\[ \kappa^b: \K^b_*  (X,A) \longrightarrow \K_*^\an (X,A). \]
\end{prop}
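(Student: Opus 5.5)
The plan is to run the argument of Proposition \ref{prop.kappawelldefined} again, with $\sign_K$ replaced by $[D^{\Spin^c}]$ and $\K\smlhf$ by $\K$. Throughout we work integrally, since in the $\Spin^c$ case no powers of $2$ have to be inverted.

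\emph{The $\Spin^c$ toolkit.} For a closed $\Spin^c$-manifold $M$ and $x\in \K^0(M)$ we have
\[ [D^{\Spin^c}_{M,x}] = [[x]]\otimes [D^{\Spin^c}_M] \]
by (\ref{equ.dspincmeeprime}). For a $\K$-oriented embedding $j$, the Connes--Skandalis correspondence calculus gives $j^!_{\an,\Spin^c}[D^{\Spin^c}_M]=[D^{\Spin^c}_N]$; this was used in the proof of Lemma \ref{lem.gysinandcap}.

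Next we need the $\Spin^c$ analogue of Proposition \ref{prop-boardman}:
\[ [[j_!(x)]]\otimes[D^{\Spin^c}_S] = j_*\bigl([[x]]\otimes j^![D^{\Spin^c}_S]\bigr). \]
Here $j_!$ is right multiplication by the integral class $\Sigma(j)\in\KK^\ell(C(M),C(S))$ of a $\K$-oriented embedding. The proof is the same correspondence computation as in Proposition \ref{prop-boardman}. It is in fact easier, because Connes--Skandalis directly covers $\K$-oriented maps and we do not need Hilsum's oriented extension. Both sides are represented by the correspondence $S\xleftarrow{j}(M,x)\to\pt$.

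\emph{Independence of the suspension.} Suppose we replace $V=M\times\real$ by an arbitrary odd-rank $\Spin^c$ bundle $W$. Use the manifold $S(W\oplus1)\times T^2$ together with the diagrams (\ref{dia.projnsst2}) and (\ref{dia.sectionsst2}). All the sections $\sigma,\tau,s,\widetilde\tau$ are $\K$-oriented embeddings, carrying the product and sphere-bundle $\Spin^c$ structures. The identical chain of equalities then goes through: restriction of $[D^{\Spin^c}]$, followed by the Boardman formula, followed by commutativity of the diagrams.

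\emph{Vector bundle modification.} We treat the four parity cases exactly as in Proposition \ref{prop.kappawelldefined}. The core identity is
\[ \pi_*[D^{\Spin^c}_{S,\sigma_!x}] = \pi_*\sigma_*\bigl([[x]]\otimes\sigma^![D^{\Spin^c}_S]\bigr) = [D^{\Spin^c}_{M,x}], \]
using $\pi\circ\sigma=\id$. Before this step we must check one compatibility. The $\Spin^c$ structure on $S(V\oplus1)$ used in the $\K^b$ relation is the one induced from $M$ and $V$. The ABS Thom class $u_{ABS}(V)$ defines the cohomological $\sigma_!$, and the normal bundle of $\sigma(M)$ in $S$ is $V$. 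We need this $\K$-orientation on the normal bundle to coincide with the one giving $\sigma^![D^{\Spin^c}_S]=[D^{\Spin^c}_M]$. I expect this orientation bookkeeping, including signs and the $\pm$ in the Thom class convention, to be the main obstacle. I would settle it by noting that the analytic Gysin class of a $\Spin^c$ embedding is the Connes--Skandalis element $j!$. Under the correspondence with topological K-theory, this element is multiplication by the ABS Thom class (Connes--Skandalis, Lemma III.5). So both sides use the same orientation of $V$.

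\emph{Bordism invariance.} Let $W$ be a $\Spin^c$ nullbordism with $\partial W=M$. Then
\[ f_*[D^{\Spin^c}_{M,x}] = g_*\bigl([[y]]\otimes j_*\partial_*[D^{\Spin^c}_{W,M}]\bigr) = 0, \]
by Proposition \ref{prop.kaspmultcapnaturality} together with the boundary relation $\partial_*[D^{\Spin^c}_{W}]=[D^{\Spin^c}_M]$ in analytic K-homology (Baum--Douglas--Taylor; Higson--Roe). The odd case follows by crossing the nullbordism with $S^1$ and applying Lemma \ref{lem.basechangecohomgysin}.

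Additivity under sums of classes and under disjoint unions is immediate, so $\kappa^b$ descends to a homomorphism. The relative case with $\partial M\neq\varnothing$ is handled in the same way, using relative classes in $\K^\an_*(X,A)$.
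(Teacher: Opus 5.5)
Your proposal is correct and follows essentially the paper's route. The paper proves this proposition only by declaring it completely analogous to Proposition \ref{prop.kappawelldefined}, and you carry out exactly that transcription: the twisting formula (\ref{equ.dspincmeeprime}), the $\Spin^c$ Boardman formula via Connes--Skandalis correspondences, the $S(W\oplus 1)\times T^2$ trick, the four parity cases, and bordism invariance via $\partial_*$ of the relative Dirac class of $W$ being $[D^{\Spin^c}_M]$ in place of Rosenberg--Weinberger. You also make explicit that the ABS Thom class defining the cohomological $\sigma_!$ agrees with the Connes--Skandalis element $\Sigma(\sigma)$, a point the paper leaves implicit.
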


\noindent
The proof of the proposition is completely analogous to the one given for $\kappa^{b/2}: \K^{b/2}_*  (X,A) \longrightarrow \K_*^\an (X,A)[\tfrac12]$.

\subsection{$\K^0 ( - )$-linearity of the homomorphism $\kappa$}
Recall that we have a cap product 
$$\cap': \K^i (X)\otimes \K^{b/2}_j   (X)\to \K^{b/2}_{j-i}   (X)$$ defined by
$$y\cap' [M, \Delta_{\mathbb{C}} (\nu_M),x,f]:= [M, \Delta_{\mathbb{C}} (\nu_M),f^* y\otimes x,f]\,.$$
See \cite[p. 75, 3.2]{Jak:BDH}.

\begin{lemma}\label{lem.kappa-is-linear}
The homomorphism $\kappa:= \kappa^{b/2}:  \K^{b/2}_0  (X)\to \K^\an_0   (X)[\smlhf]$ is $\K^0 ( - )[\smlhf]$-linear, that is the diagram
\[ \xymatrix{
	\K^0 (X)[\smlhf] \otimes \K^{b/2}_0(X) \ar[d]_{\cap'} \ar[r]^{\id \otimes \kappa} & 
	\K^0 (X)[\smlhf] \otimes \K^\an_0 (X) [\smlhf] \ar[d]^{\otimes} \\
	\K^{b/2}_0(X) \ar[r]_\kappa & 
	\K^\an_0 (X)[\smlhf]
} \]
commutes.
\end{lemma}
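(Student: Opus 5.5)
We need to show $\kappa(y\cap' \xi) = [[y]]\otimes\kappa(\xi)$ for $y\in \K^0(X)[\smlhf]$ and $\xi\in\K^{b/2}_0(X)$. Both sides are additive in $y$ and in $\xi$. The module structure on the right is Kasparov product with $[[y]]\in \KK_0(C(X),C(X))[\smlhf]$, and $y\mapsto [[y]]$ is additive. It therefore suffices to check the identity on generators. These are $y=[E]\otimes\alpha$ with $E\to X$ a vector bundle and $\alpha\in\intg[\smlhf]$, and $\xi=[M,\Delta_\cplx(\nu_M),x,f]$ a single cycle. The scalar $\alpha$ factors out of both sides, so we may take $y=[E]$.

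\emph{Case $x\in\K^0(M)[\smlhf]$.} By definition, $y\cap'\xi=[M,\Delta_\cplx(\nu_M),f^*E\otimes x,f]$ with $f^*E\otimes x\in \K^0(M)[\smlhf]$. Branch (1) of the definition of $\kappa$ and (\ref{equ.dsignmeeprime}) then give
\[ \kappa(y\cap'\xi)=f_*\sign_K(M,f^*E\otimes x)=f_*\bigl([[f^*E]]\otimes[[x]]\otimes\sign_K(M)\bigr). \]
Here we use that $[E]\mapsto[[E]]$ is a ring homomorphism, so $[[f^*E\otimes x]]=[[f^*E]]\otimes[[x]]$; this extends to $\K^0(M)[\smlhf]$ by linearity. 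Proposition \ref{prop.kaspmultcapnaturality}, applied with $\alpha=[[x]]\otimes\sign_K(M)=\sign_K(M,x)$, turns this into $[[E]]\otimes f_*\sign_K(M,x)=[[E]]\otimes\kappa(\xi)$. Proposition \ref{prop.kaspmultcapnaturality} is stated for smooth $N$, while $X$ is only a finite CW complex, and $\K^\an_*$ is not a smooth-manifold notion. I would handle this by proving the naturality directly from the definition: $[[E]]=[E]\otimes[\Delta_X]$, and $f_*$ is Kasparov product with $[f^\sharp]$. The identity then reduces to the associativity-type relation $[f^\sharp]\otimes[[f^*E]] = [[E]]\otimes[f^\sharp]$ in $\KK_0(C(X),C(M))$. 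This relation holds at the level of modules, since $C(M)\otimes_{C(M)}\Gamma(f^*E)\cong \Gamma(E)\otimes_{C(X)}C(M)$ as $C(X)$–$C(M)$ bimodules. Alternatively, embed $X$ in a smooth manifold (a regular neighbourhood) as a retract and push forward.

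\emph{Case $x\in\K^1(M)[\smlhf]$.} Branch (2) uses $M\times S^1$, the section $\sigma$ and the projection $\pi_1$. We have $\kappa(y\cap'\xi)=(f\pi_1)_*\sign_K(M\times S^1,\sigma_!(f^*E\otimes x))$. The key step is the projection formula $\sigma_!(f^*E\otimes x)=\pi_1^*f^*E\otimes\sigma_!(x)$. This is the standard module property of the cohomological Gysin map: $\sigma_!(\sigma^*z\cdot x)=z\cdot\sigma_!(x)$, applied with $z=\pi_1^*f^*E$ and using $\sigma^*\pi_1^*=\id$. It can be read off from the Thom-isomorphism description of $\sigma_!$ in the proof of Lemma \ref{lem.basechangecohomgysin}, since cupping with the Thom class commutes with multiplication by pulled-back classes. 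With this in hand, the degree-zero argument applies verbatim with $M\times S^1$, $f\circ\pi_1$ and $\sigma_!(x)$ in place of $M$, $f$ and $x$, and gives $[[E]]\otimes(f\pi_1)_*\sign_K(M\times S^1,\sigma_!(x))=[[E]]\otimes\kappa(\xi)$.

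Relative pairs $(X,A)$ and cycles with boundary would be handled the same way, though the lemma is stated only for $X$. I expect the main obstacle to be justifying the naturality of Proposition \ref{prop.kaspmultcapnaturality} when $X$ is not a manifold, together with keeping the sign conventions consistent in the Gysin projection formula; the remaining steps are bookkeeping.
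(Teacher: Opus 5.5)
Your proposal is correct and follows the paper's route. Like the paper, you unwind $\cap'$, use the twisting identity (\ref{equ.dsignmeeprime}) to write $\sign_K(M,f^*E\otimes x)=[[f^*E]]\otimes\sign_K(M,x)$, and finish with the naturality of Proposition \ref{prop.kaspmultcapnaturality}; your projection-formula treatment of the odd branch $x\in\K^1(M)[\smlhf]$ and your bimodule proof of $[f^\sharp]\otimes[[f^*E]]=[[E]]\otimes[f^\sharp]$ for non-smooth $X$ supply details that the paper's proof, which only writes out the branch $x\in\K^0(M)[\smlhf]$ and cites the proposition without comment, leaves implicit.
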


\begin{proof}
The naturality property of Proposition \ref{prop.kaspmultcapnaturality} still holds when we invert 2; then we have
\begin{align*}
\kappa ([E] \cap' [M, \Delta_\cplx (\nu_M), x, f])
&= \kappa [M, \Delta_\cplx (\nu_M), [f^* E] \otimes x, f] \\
&= f_* \sign_{K}(M, (f^* E)\otimes x) \\
&= f_* ( [[f^*E]] \otimes \sign_{K}(M,x)) \\
&= [[E]] \otimes f_* \sign_K (M,x) \\
&= [[E]] \otimes \kappa [M, \Delta_\cplx (\nu_M),x,f].
\end{align*}
where Proposition \ref{prop.kaspmultcapnaturality} has been used in the fourth equality.
 \end{proof}
 
 \noindent
 We also have the analogue for $\kappa^b$, proved with the same argument,
 
\begin{lemma}\label{lem.kappa-is-linear}
The homomorphism $\kappa^{b}: \K^{b}_0  (X)\to \K^\an_0   (X)$ is $\K^0 ( - )$-linear, that is the diagram
\[ \xymatrix{
	\K^0 (X) \otimes  \K^{b}_0   (X) \ar[d]_{\cap'} \ar[r]^{\id \otimes \kappa^b} & 
	\K^0 (X) \otimes \K^\an_0 (X) \ar[d]^{\otimes} \\ 
	\K^{b}_0   (X) \ar[r]_{\kappa^b} & 
	\K^\an_0 (X)
} \]
commutes.
\end{lemma}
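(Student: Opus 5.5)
The plan mirrors the proof of the preceding lemma for $\kappa^{b/2}$, with the signature class replaced by the class of the $\Spin^c$-Dirac operator. No factor of $2$ is inverted anywhere.

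By additivity of both horizontal maps and bilinearity of $\cap'$ and of the Kasparov product, it suffices to check commutativity on generators. These are $[E]\in\K^0(X)$ with $E$ a complex vector bundle, together with a cycle $[M,u_{ABS}(\nu_M),x,f]$ of degree $0$. Differences of bundles then follow by linearity.

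Consider first the case $x\in\K^0(M)$, so that branch (1) of the definition of $\kappa^b$ applies on both sides. We have $[E]\cap'[M,u_{ABS}(\nu_M),x,f]=[M,u_{ABS}(\nu_M),[f^*E]\otimes x,f]$, whose image under $\kappa^b$ is $f_*[D^{\Spin^c}_{M,f^*E\otimes x}]$. Identity (\ref{equ.dspincmeeprime}), extended by linearity to virtual bundles $x$, rewrites this as $f_*([[f^*E]]\otimes[D^{\Spin^c}_{M,x}])$. Proposition \ref{prop.kaspmultcapnaturality}, applied with $\alpha=[D^{\Spin^c}_{M,x}]$, turns this into $[[E]]\otimes f_*[D^{\Spin^c}_{M,x}]=[[E]]\otimes\kappa^b[M,u_{ABS}(\nu_M),x,f]$, as required.

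Now suppose $x\in\K^1(M)$, which can occur in total degree $0$ when $\dim M$ is odd. Branch (2) then applies on both sides, and we pass to $M\times S^1$ with the section $\sigma$. The one point needing care is the identity $\sigma_!(f^*E\otimes x)=\pi_1^*f^*E\otimes\sigma_!(x)$. This is the projection formula for the cohomological Gysin map of the section $\sigma$, and it holds because $\sigma^*\pi_1^*=\id$. It can be seen from the Thom-isomorphism description of $\sigma_!$ used in Lemma \ref{lem.basechangecohomgysin}, since cup product with a pulled-back class commutes with each of the three vertical maps there. Granting this identity, the degree-$0$ computation above, carried out for the map $f\circ\pi_1:M\times S^1\to X$ and the class $\sigma_!(x)\in\K^0(M\times S^1)$, gives the claim. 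Here one uses $(f\circ\pi_1)^*E=\pi_1^*f^*E$.

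The projection-formula step is the only one not literally contained in the argument for $\kappa^{b/2}$, so it is the main point I would check carefully. Everything else is identical to that argument, with (\ref{equ.dspincmeeprime}) in place of (\ref{equ.dsignmeeprime}) and Proposition \ref{prop.kaspmultcapnaturality} used without inverting $2$.
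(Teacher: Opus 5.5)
Your proof is correct and is essentially the paper's argument: the paper proves this lemma by repeating the chain of equalities used for $\kappa^{b/2}$, which is \eqref{equ.dspincmeeprime} followed by Proposition~\ref{prop.kaspmultcapnaturality}, exactly as in your branch-(1) computation. The paper writes out only the case $x\in\K^0(M)$, so your treatment of the $\K^1(M)$ branch through the projection formula $\sigma_!(f^*E\otimes x)=\pi_1^*f^*E\otimes\sigma_!(x)$ usefully completes a case the paper leaves implicit.
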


\subsection{$\kappa$ is an isomorphism}
\begin{thm}
The homomorphism $\kappa^{b/2}: \K^{b/2}_*  ( - )\to \K^\an_*   ( - )[\smlhf]$
induces a natural isomorphism of homology theories.
\end{thm}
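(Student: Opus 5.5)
Since both sides are homology theories on finite CW pairs, I will show that $\kappa$ is a natural transformation of homology theories and that it is an isomorphism on the point. The comparison theorem for homology theories on finite CW pairs then gives the result by induction on cells, via Mayer--Vietoris or the five lemma.

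The first step is naturality. Covariance in maps of pairs holds by construction, because $\kappa[M,\Delta_\cplx(\nu_M),x,f]$ is a pushforward $f_*(\cdot)$ and $(g\circ f)_*=g_*f_*$ holds in $\KK$. The real content is compatibility with the connecting homomorphisms. In Jakob's theory the boundary map $\partial: \K^{b/2}_*(X,A)\to \K^{b/2}_{*-1}(A)$ sends a cycle $(M,\Delta_\cplx(\nu_M),x,f)$ to $(\partial M,\Delta_\cplx(\nu_{\partial M}),x|_{\partial M},f|_{\partial M})$. I must show $\partial_*^{\an}\, f_*\sign_K(M,x)=(f|_{\partial M})_*\sign_K(\partial M,x|_{\partial M})$. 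The key input is the boundary formula $\partial_*\sign_K(M,\partial M)=\sign_K(\partial M)$ in analytic K-homology with $2$ inverted (Rosenberg--Weinberger \cite{RosWei:SO}, \cite[Prop. 4.4]{Part1}, already used in the bordism-invariance part of Proposition \ref{prop.kappawelldefined}). The factor $2^{-\lfloor n/2\rfloor}$ is exactly designed to make this hold. To this formula I will add the twisted version, obtained from the $\K^0$-linearity: cap with $[[x]]$ commutes with $\partial_*$, by a relative version of Proposition \ref{prop.kaspmultcapnaturality}. Then naturality of $\partial_*$ with respect to $f$ gives the claim. For odd-degree $x$ the suspension by $S^1$ is applied, and $\partial(M\times S^1)=\partial M\times S^1$ is compatible with the base-change Lemma \ref{lem.basechangecohomgysin}. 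The right framework is relative Kasparov groups $\KK(C_0(M\setminus\partial M),\cplx)$ with the six-term sequence, so I expect this step to be the main technical obstacle: it requires checking that the relative class of the twisted signature operator on a manifold with boundary (in the relevant $b$- or wedge-type calculus) has boundary equal to the twisted class on $\partial M$.

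Next I compute on the point. By Jakob's isomorphism $\varphi$, $\K^{b/2}_*(\pt)\cong \K_*(\pt)[\smlhf]$, which is $\ism$ in even degrees and $0$ in odd degrees; the same holds for $\K^\an_*(\pt)[\smlhf]$. The odd case is trivial. In degree $0$ the generator is $[\pt,\Delta_\cplx,1,\id]$, and $\kappa$ sends it to $\sign_K(\pt)=[D^{\sign}_{\pt}]$, the class of the rank-one vector space, i.e. the generator $1\in\KK_0(\cplx,\cplx)$. By Bott periodicity $\kappa$ commutes with the $\beta$-action. In degree $2$ I can use the cycle $[S^2,\Delta_\cplx,1,\pt]$ in its bundle-modified form, or better, reduce to degree $0$ via the vector bundle modification relation, which equates $[\pt,\Delta_\cplx,1,\id]$ with $[S^{2k},\Delta_\cplx,\sigma_!(1),\pt]$. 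Equivalently, compatibility of $\kappa$ with degree shifts follows from the $\K^0$-linearity (Lemma \ref{lem.kappa-is-linear}) together with periodicity. Either way, $\kappa$ is a $\ism$-linear map $\ism\to\ism$ sending $1\mapsto1$, hence an isomorphism.

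Finally, Mayer--Vietoris plus induction over the cells of a finite CW pair (with naturality and the five lemma) extends the isomorphism to all finite CW pairs. Relative groups are handled by the long exact sequences, using the compatibility with $\partial$ established in the first step.
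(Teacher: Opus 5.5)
Your argument is essentially the paper's. Naturality in maps is immediate. Compatibility with connecting homomorphisms comes from the Rosenberg--Weinberger boundary formula: the paper simply cites \cite[Thm.~2 and Remark~4]{RosWei:SO}, and your twisted and suspended elaboration fills in what it leaves implicit. You then reduce to the point, where the generator $[\pt,\Delta_\cplx,1,\id]$ goes to $\sign_K(\pt)=[D^{\sign}_\pt]$, of index $1$; it is a generator because $\varphi$ sends it to $\Delta_\cplx(\pt)=1$. The paper phrases this last step as $\kappa\circ\lambda'=\lambda$, using Jakob's Poincar\'e duality and $R$-linearity with $R=\K^0(\pt)[\smlhf]\cong\ism$. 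Since additive maps of $\ism$-modules are automatically $\ism$-linear, your generator-to-generator check has the same content with less machinery.

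The one part that is wrong is your treatment of degree $2$ (or $2k$) at the point.
\begin{itemize}
\item The cycle $[S^2,\Delta_\cplx,1,\pt]$ is zero in $\K^{b/2}_2(\pt)$. Its $\varphi$-image is $c(\Psi^2)^{-1}$ of the push-forward of $\Delta_{\SO}(S^2)$, which lies in $\KO_2(\pt)[\smlhf]=0$. Consistently, $\kappa$ of it is half the index of the signature operator of $S^2$, which is $0$.
\item Vector bundle modification preserves the degree $\dim M-\deg x$. So $[S^{2k},\Delta_\cplx,\sigma_!(1),\pt]$ is again a degree-$0$ class and transports nothing to degree $2k$.
\item The $\K^0$-linearity lemma does not shift degrees either.
\end{itemize}
The correct fix is what the paper does implicitly. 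The definition of $\kappa$ already uses Bott periodicity: it only distinguishes $x$ of even or odd degree. So everything is effectively $\intg/2$-graded, and only degrees $0$ and $1$ at the point need checking. If you want to keep Jakob's $\intg$-grading, use instead the generator $[\pt,\Delta_\cplx,\beta^k,\id]$ with $\beta^k\in\K^{-2k}(\pt)[\smlhf]$. Its $\varphi$-image is the generator $\beta^k$, and $\kappa$ sends it to $\sign_K(\pt)$ under $\K^\an_{2k}(\pt)\cong\K^\an_0(\pt)$.
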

\begin{proof}
We shall use the shorter notation $\kappa$ instead of $\kappa^{b/2}$.
That $\kappa$ is a natural transformation of homotopy functors is clear: indeed, if $g:X\to Y$ is a continuous map
between two compact topological spaces, then in $\K^\an_* (Y)$ we have 
\begin{align*}
&g_* (\kappa [M, \Delta_\cplx (\nu_M), x, f])=g_* (f_* \sign_K (M,x))= (g\circ f)_* \sign_K (M,x)\\
& = \kappa [M, \Delta_\cplx (\nu_M), x, g\circ f])= \kappa (g_* [M, \Delta_\cplx (\nu_M), x, f])
\end{align*}
Similarly for a map of pairs. Moreover,   $\kappa$ is a natural transformation of homology theories (i.e. $\kappa$ commutes with the
boundary map); this   follows immediately
from Theorem 2 in \cite{RosWei:SO}; see also Remark 4 there.  Thus in order to prove that $\kappa: \K^b_* (-,-)
\to \K^\an_* (-,-)[1/2]$
is an isomorphism it suffices to consider the topological space equal to a point. We are therefore reduced to show that
$\kappa: \K^b_* ({\rm pt})\to \K^\an_* ({\rm pt})$ is an isomorphism. If $*$ is odd then both groups are trivial and the 
assertion is true. Thus, let  $*$ be  even. Then both groups are equal to $\mathbb{Z}[1/2]$; this is well known 
for $\K^\an_0 ({\rm pt})[1/2]$ and also for $\K^\topo_0 ({\rm pt})[1/2]$; thus by Jakob's isomorphism it is also true
for $\K^b_0 ({\rm pt})$.

Let $R$ be the ring $\K^0 (\pt)[\smlhf]$.
Both $\K^b_0 (\pt)$ and $\K^\an_0 (\pt)[\smlhf]$ are free $R$-modules of rank $1$, as
we shall explain next.
Indeed, by Jakob's Poincar\'e duality theorem \cite[Thm. 4.1]{Jak:cont},
the map
$$\lambda ' : \K^0 (\pt)[\smlhf] \xrightarrow{-\cap' [\pt, \Delta, 1, \id]}  \K^b_0 (\pt)$$
is an isomorphism. Since $\K^0 (\pt)[\smlhf]$ is a free $R$-module generated by the
trivial rank $1$ vector bundle over a point, it follows that 
$\K^b_0 (\pt)$ is a free $R$-module generated by the image of this rank $1$-bundle,
which is $[\pt, \Delta, 1, \id]$.
In order to establish the parallel statement for the analytic $R$-module, we pause
for a moment to discuss the class of the signature operator on a point.
This class, in $\K^\an_0 (\pt)$, is explicitly given by the triple $[D^{{\rm sign}}_\pt] = [H, \alpha, F],$
where $H= H_+ \oplus H_- = \cplx \oplus 0,$
$\alpha$ is scalar multiplication by $\cplx = C(\pt),$
and 
\[ F= \begin{pmatrix}   
0 & F_- \\ F_+ & 0
\end{pmatrix}. \]
Here, $F_+$ is the zero map $\cplx \to 0$
and $F_-$ is the zero map $0 \to \cplx$.
Recall that a generic element of $\K^\an_0 (\pt)$ is given by $[H,\alpha,F]$ where $H=H_+\oplus H_-$,
$\alpha$ is scalar multiplication and $F$ is a $\mathbb{Z}_2$-graded odd operator that is necessarily 
Fredholm. The map $\K^\an_0 (\pt) \ni [H,\alpha,F]\to {\rm ind}(F_+)\in \mathbb{Z}$ is an isomorphism.  
Notice then  that the image of $[D^{{\rm sign}}_\pt]$ under this isomorphism  is equal to $1$.
Tensoring with $\mathbb{Z}[\smlhf]$ one obtains  an isomorphism ${\rm ind}: \K^\an_0 (\pt) [\smlhf]\to R$.
Note that we have a natural homomorphism $\lambda: R\to \K^\an_0 (\pt) [\smlhf]$, 
$\lambda [y]:= [[y]]\otimes [D^{{\rm sign}}_\pt]$. We have also a standard  isomorphism
${\rm rk}: R\to \mathbb{Z}[\smlhf]$ induced by the rank of a bundle.
Then we observe that the following diagram is commutative:
\[ \xymatrix{
R \ar[rr]^\lambda \ar[rd]_\rk^\simeq & & \K^\an_0 (\pt) \ar[ld]^{{\rm ind}}_\simeq \\
& \intg [\smlhf] &
} \]
It follows that $\lambda$ is an isomorphism of $R$-modules.
The diagram
\[ \xymatrix{
& R \ar[ld]_{\lambda'}^\simeq \ar[rd]^\lambda_\simeq & \\
\K^b_0 (\pt) \ar[rr]_\kappa & & \K^\an_0 (\pt)[\smlhf]
} \]
commutes, since
\[ \kappa \lambda' [y]
  = \kappa ([y] \cap' [\pt, \Delta, 1, \id])
   = [[y]] \otimes \kappa [\pt, \Delta, 1, \id]
   = [[y]] \otimes [D^{{\rm sign}}_\pt]
   = \lambda [y]. \]
where we used the $R$-linearity of $\kappa$ provided by Lemma \ref{lem.kappa-is-linear}
in the second equality and the fact that, in this case $[D^{{\rm sign}}_\pt]=\sign_K (\pt)$, given that a point has dimension 0. Consequently $\kappa: \K^b_0 (\pt) \to \K^\an_0 (\pt)[\smlhf]$ is an isomorphism
and the proof of the Theorem is complete.
\end{proof}

\noindent
A similar argument establishes the following result:

\begin{thm}
The homomorphism $\kappa^{b}: \K^{b}_*  ( - )\to \K^\an_*   ( - )$
 induces a natural isomorphism of homology theories.
\end{thm}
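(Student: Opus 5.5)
We mimic the proof of the preceding theorem for $\kappa^{b/2}$, replacing the signature operator by the $\Spin^c$-Dirac operator. No power of $2$ needs to be inverted.

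\emph{Step 1: $\kappa^b$ is a natural transformation of homology theories.} Naturality under continuous maps $g:X\to Y$ and maps of pairs is formal: $g_*\kappa^b[M,u_{ABS}(\nu_M),x,f]=(g\circ f)_*[D^{\Spin^c}_{M,x}]=\kappa^b(g_*[M,u_{ABS}(\nu_M),x,f])$. Compatibility with the boundary maps of the long exact sequences of pairs comes from the boundary compatibility of Dirac classes, $\partial_*[D^{\Spin^c}_{W,y}]=[D^{\Spin^c}_{\partial W,y|_{\partial W}}]$ (cf.\ \cite{RosWei:SO} and the Baum--Douglas setting). Here the orientation on the boundary is the one induced from $u_{ABS}(\nu_W)$, consistent with the sign convention in the bordism relation. $\K^b_*$ is a homology theory by Jakob, and $\K^\an_*$ is a homology theory on finite CW pairs. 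By the comparison theorem (Mayer--Vietoris/five lemma and induction on cells), it therefore suffices to show that $\kappa^b:\K^b_*(\pt)\to\K^\an_*(\pt)$ is an isomorphism.

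\emph{Step 2: the odd case.} In odd degree both groups vanish: $\K^\an_1(\pt)=0$ and, via Jakob's isomorphism $\varphi$, $\K^b_1(\pt)\cong\K^\topo_1(\pt)=0$.

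\emph{Step 3: the even case.} Put $R=\K^0(\pt)\cong\intg$. Jakob's Poincar\'e duality \cite[Thm. 4.1]{Jak:cont} shows that $\lambda':R\to\K^b_0(\pt)$, $y\mapsto y\cap'[\pt,u_{ABS},1,\id]$, is an isomorphism, so $\K^b_0(\pt)$ is free of rank one over $R$ on $[\pt,u_{ABS},1,\id]$. Define $\lambda:R\to\K^\an_0(\pt)$ by $\lambda(y)=[[y]]\otimes[D^{\Spin^c}_\pt]$. On a point, $[D^{\Spin^c}_\pt]=[\cplx\oplus 0,\alpha,0]$, which has index $1$. Hence $\mathrm{ind}\circ\lambda=\rk$, and $\lambda$ is an isomorphism. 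By the $\K^0$-linearity of $\kappa^b$ (the second Lemma \ref{lem.kappa-is-linear}), $\kappa^b\lambda'(y)=[[y]]\otimes\kappa^b[\pt,u_{ABS},1,\id]=[[y]]\otimes[D^{\Spin^c}_\pt]=\lambda(y)$. Thus $\kappa^b=\lambda\circ(\lambda')^{-1}$ is an isomorphism in degree $0$.

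The even degrees $2k$ are handled by periodicity. In Jakob's grading, degree $2k$ is reached from degree $0$ by vector bundle modification along a trivial even-rank bundle, or equivalently by cycles $[\pt,u,\beta^{-k},\id]$. Both theories are $2$-periodic, and $\kappa^b$ intertwines the periodicity isomorphisms, since vector bundle modification is compatible with $\kappa^b$ by Proposition \ref{prop:kappawell}. So the degree-$0$ case suffices.

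\emph{Main obstacle.} The step I expect to be hardest is the boundary compatibility in Step 1. One has to check that the Dirac class of $\partial W$ with the induced $\Spin^c$ structure is the image of the relative Dirac class of $W$ under $\partial_*$, with signs matching Jakob's convention $u_{M_i}=(-1)^iu_W|_{M_i}$. The cleanest route is to cite the known compatibility of the Baum--Higson--Schick map with boundaries, or the Rosenberg--Weinberger result in its $\Spin^c$ version, rather than re-derive it.
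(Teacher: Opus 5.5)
Your proposal follows the paper's proof essentially step for step (naturality plus boundary compatibility via Rosenberg--Weinberger, reduction to a point, Jakob's Poincar\'e duality map $\lambda'$, the fact that $[D^{\Spin^c}_\pt]$ has index $1$, and the $\K^0$-linearity of $\kappa^b$), and it is correct. The one inaccuracy is in your periodicity aside: vector bundle modification preserves Jakob's degree $\dim M-\deg x$, so it does not move you between degrees. This is harmless, because, as in the paper, $\K^b_*$ is $\intg/2$-graded via Bott periodicity and $\kappa^b$ is defined through that identification, so only degrees $0$ and $1$ need to be checked.
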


\subsection{Compatible $\K^* ( -)$-module structures on
$\K^\topo_* ( - )$, $\K^{b/2}_* ( - )$ and $\K^{{\rm b}}_* ( - )$.}

\medskip
\noindent
Recall  the cap product $\cap'$ on  $\K^{b/2}$-homology, given by
\[ \K^* (X) \otimes \K^{b/2}_i (X) \longrightarrow \K^{b/2}_{i-*} (X), \]
\[ y \cap' [M,\Delta_\cplx (\nu_M),x,f] := [M, \Delta_\cplx (\nu_M),f^* (y) \otimes x, f], \]

\noindent
Recall also that for any compact topological space $X$, Jakob has defined an isomorphism 
\begin{equation}\label{jacob}
\varphi: \K^{b/2}_{\ell} (X) \rightarrow \K^\topo_{\ell} (X)[\smlhf]
\end{equation} 
that satisfies the following compatibility relation
\begin{equation}\label{jakob-compatibility} 
 \xymatrix{
	\K^* (X) \otimes \K^{b/2}_j (X) \ar[d]_-{\id \otimes \varphi} \ar[r]^-{\cap'} & 
	\K_{j-*}^{b/2}  (X) \ar[d]^-{\varphi} \\
	\K^* (X) \otimes \K^\topo_{j} (X)[\smlhf] \ar[r]^-{\cap} & 
	\K^\topo_{j-*} (X)[\smlhf]
} 
\end{equation}
See again \cite{Jak:BDH}, page 75, Section 3.2.

If $N$ is a closed oriented manifold
of dimension $n$, then we 
have fundamental classes
\begin{equation}\label{equ.fundamental-oriented}
  \Delta_\cplx (N) \in \K^\topo_n (N)[\smlhf] \quad\text{and}\quad 
  [N, \Delta_\cplx (\nu_N), 1, \id_N] \in \K^{b/2}_n (N)
\end{equation}
and  Poincar\'e duality 
isomorphisms
\begin{equation}\label{equ.poincare-oriented}
	\K^*(N) \xrightarrow{-\cap \Delta_\cplx (N)} 
	\K^\topo_{n-*} (N) 
	\quad\text{and}\quad 
	\K^*(N) \xrightarrow{-\cap' \, [N, \Delta_\cplx (\nu_N),1,\id_N]} 
	\K^b_{n-*} (N)\,.
\end{equation}
The next Lemma shows that Jakob's isomorphism is compatible with the two Poincar\'e 
duality isomorphisms. 
\begin{lemma} \label{lem.orientedpddeltafundjakobcommwphi}
Let $N$ be a closed oriented manifold of dimension $n$.
Then the diagram
\[ \xymatrix{
& \K^* (N)[\smlhf] 
\ar[ld]_{-\cap \Delta_\cplx (N)}^\cong 
   \ar[rd]^{~ -\cap' \,
   [N, \Delta_\cplx (\nu_N), 1,\id_N]}_\cong & \\
\K^\topo_{n-*} (N)[\smlhf] 
& & \K^{b/2}_{n-*} (N)
 \ar[ll]_{\varphi}^\cong 
} \]
commutes. A similar result holds for compact manifolds with boundary.
\end{lemma}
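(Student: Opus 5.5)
The plan is to chase an arbitrary $y\in\K^*(N)[\smlhf]$ around the triangle and reduce everything to the definition (\ref{equ.defjakobvarphikbhalf}) of Jakob's map $\varphi$ together with the compatibility square (\ref{jakob-compatibility}).

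Going right and then down, the class $y$ is sent to $y\cap'[N,\Delta_\cplx(\nu_N),1,\id_N]$. By the definition of $\cap'$ with $f=\id_N$, this is the cycle $[N,\Delta_\cplx(\nu_N),y\otimes 1,\id_N]=[N,\Delta_\cplx(\nu_N),y,\id_N]$. Applying (\ref{equ.defjakobvarphikbhalf}) then gives
\[ \varphi[N,\Delta_\cplx(\nu_N),y,\id_N]=(\id_N)_*\bigl(y\cap \Delta_\cplx(N)\bigr)=y\cap\Delta_\cplx(N), \]
which is exactly the image of $y$ along the left-hand edge. In other words, the whole statement reduces to the single identity
\[ \varphi[N,\Delta_\cplx(\nu_N),1,\id_N]=\Delta_\cplx(N), \]
combined with the module compatibility (\ref{jakob-compatibility}), since
\[ \varphi(y\cap'[N,\ldots,1,\id])=y\cap\varphi[N,\ldots,1,\id]. \]
Both routes give the same result, and I would present the argument through (\ref{jakob-compatibility}) because that is the cleaner formulation.

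The only real content, and the step I expect to be the main obstacle, is checking that the $h_*$-fundamental class $[N]_h$ appearing in Jakob's $\varphi$ is the same as $\Delta_\cplx(N)$ when $h=\K[\smlhf]$ and the Thom class is $u(\nu_N)=\Delta_\cplx(\nu_N)$. Recall that $[N]_h$ is built as the image of the Browder--Novikov class $[\Th\nu]_h$ under the Thom--Dold isomorphism determined by $\Delta_\cplx(\nu_N)$. On the other hand, $\Delta_\cplx(N)=c(\Psi^2)^{-1}\Delta_\SO(N)$, and $\Delta_\SO(N)$ is by definition the Alexander (S-)dual of $\Delta_\SO(\nu_N)$.

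To compare the two, I would first note that S-duality between $N_+$ and $\Th\nu$ is implemented by the same collapse map $c$. Consequently, the S-dual of a Thom class $u$ equals the image of $[\Th\nu]$ under the Thom--Dold isomorphism defined by $u$; this is the standard statement in Rudyak, Prop.~V.2.8. I would then use that $c$ and $\Psi^2$ are maps of ring spectra. Because of this, pushing along $c(\Psi^2)^{-1}$ commutes with S-duality and sends the $\KO[\smlhf]$ Thom--Dold isomorphism for $\Delta_\SO(\nu_N)$ to the $\K[\smlhf]$ Thom--Dold isomorphism for $\Delta_\cplx(\nu_N)$, in the same way as in Lemma \ref{lemma:delta-complex-orientation}. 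Together these give $[N]_{\K[1/2]}=\Delta_\cplx(N)$.

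Finally, both horizontal maps in the triangle are isomorphisms (by Poincaré duality and by Jakob's theorem), so the diagram of isomorphisms commutes once this identity is established. For the case with boundary, I would run the identical argument with relative S-duality for $(N,\partial N)$ and the relative fundamental class $\Delta_\cplx(N,\partial N)$.
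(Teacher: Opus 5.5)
Your argument is correct and is essentially the paper's proof: the paper applies the compatibility square \eqref{jakob-compatibility} to get $\varphi(x\cap'[N,\Delta_\cplx(\nu_N),1,\id_N])=x\cap\varphi[N,\Delta_\cplx(\nu_N),1,\id_N]$. It then reads off $\varphi[N,\Delta_\cplx(\nu_N),1,\id_N]=(\id_N)_*(1\cap\Delta_\cplx(N))=\Delta_\cplx(N)$ directly from \eqref{equ.defjakobvarphikbhalf}, so your extra S-duality check that Jakob's $[N]_h$ for $h=\K[\smlhf]$ agrees with $c(\Psi^2)^{-1}\Delta_\SO(N)$ is not needed there, though it is a sound justification (via multiplicativity of $c$ and $\Psi^2$) of the paper's specialization of Jakob's map.
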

\begin{proof}
Let $x$ be any element in $\K^* (N)[\smlhf]$. Then by the compatibility relation
(\ref{jakob-compatibility}),
\[ \varphi (x \cap' [N, \Delta_\cplx (\nu_N) ,1,\id_N])
   = x \cap \varphi  [N, \Delta_\cplx (\nu_N) ,1,\id_N] \]
By (\ref{equ.defjakobvarphikbhalf}),
$\varphi  [N, \Delta_\cplx (\nu_N) ,1,\id_N]
  = (\id_N)_* (1 \cap \Delta_\cplx (N)) = \Delta_\cplx (N)$ and the proof is complete.
\end{proof}

\noindent
There is an analogous isomorphism
\begin{equation}\label{jacob-bis}
\varphi: \K^{b}_{\ell} (X) \rightarrow \K^\topo_{\ell} (X)
\end{equation} 
that satisfies 
\begin{equation}\label{jakob-compatibility-bis} 
 \xymatrix{
	\K^* (X) \otimes \K^{b}_j (X) \ar[d]_{\;\;\;\id \otimes \varphi} \ar[r]^-{\cap'} & 
	\K_{j-*}^{b}  (X) \ar[d]^{\varphi} \\
	\K^* (X) \otimes \K^\topo_{j} (X) \ar[r]^-{\cap} & 
	\K^\topo_{j-*} (X)
} 
\end{equation}

\medskip
\noindent
Similarly, if $N$ is a closed $\Spin^c$-manifold 
of dimension $n$, then we 
have fundamental classes
\begin{equation}\label{fundamental-spin_c}
[N]_K\in \K^\topo_n (N) \quad\text{and}\quad [N,1,\id_N]\in \K^b_n (N)
\end{equation}
and  Poincar\'e duality 
isomorphisms
\begin{equation}\label{poincare-k-spin_c}
	\K^*(N)\xrightarrow{-\cap [N]_K} 
	\K^\topo_{n-*} (N)\quad\text{and}\quad 
	\K^*(N) \xrightarrow{-\cap' \, [N,1,\id_N]} \K^b_{n-*} (N)\,.
\end{equation}
Then we have the result analogous to Lemma \ref{lem.orientedpddeltafundjakobcommwphi}, whose proof we omit.
\begin{lemma} \label{lem.pddeltafundjakobcommwphihphi}
Let $N$ be a closed $\Spin^c$-manifold of dimension $n$.
Then the diagram
\[ \xymatrix{
& \K^* (N)
\ar[ld]_{-\cap [N]_K}^\cong 
   \ar[rd]^{-\cap' \,
   [N,1,\id_N]}_\cong & \\
\K^\topo_{n-*} (N)
& & \K^b_{n-*} (N)
 \ar[ll]_{\varphi} 
} \]
commutes. A similar result holds for compact $\Spin^c$-manifolds with boundary.
\end{lemma}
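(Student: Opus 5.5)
The plan is to copy the proof of Lemma \ref{lem.orientedpddeltafundjakobcommwphi} essentially verbatim. The only change is that the Sullivan fundamental class $\Delta_\cplx(N)$ is replaced by the Atiyah--Bott--Shapiro fundamental class $[N]_K=[N]_{ABS}$, and the theory $\K^{b/2}_*$ by $\K^b_*$.

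Fix $x\in \K^*(N)$. The cap product $\cap'$ and the isomorphism $\varphi$ of (\ref{jacob-bis}) satisfy the compatibility square (\ref{jakob-compatibility-bis}). Applying it with $X=N$ to the element $[N,1,\id_N]\in\K^b_n(N)$ (short for $[N,u_{ABS}(\nu_N),1,\id_N]$), we get
\[ \varphi\bigl(x\cap'[N,1,\id_N]\bigr)=x\cap \varphi[N,1,\id_N]. \]

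It then remains to compute $\varphi[N,1,\id_N]$. By the defining formula for $\varphi$ on $\K^b_*$,
\[ \varphi[N,u_{ABS}(\nu_N),x,f]=f_*\bigl(x\cap[N,\partial N]_{ABS}\bigr), \]
so
\[ \varphi[N,1,\id_N]=(\id_N)_*\bigl(1\cap[N]_{ABS}\bigr)=[N]_{ABS}. \]
This uses that $1$ is the unit of $\K^*(N)$, so capping with it is the identity. The class $[N]_{ABS}$ is by definition the $\K$-homology fundamental class $[N]_K$ of (\ref{fundamental-spin_c}), namely the image of the Browder--Novikov class under the Thom--Dold isomorphism built from $u_{ABS}(\nu_N)$. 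Combining the two displays gives $\varphi(x\cap'[N,1,\id_N])=x\cap[N]_K$, which is exactly the commutativity of the triangle.

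The one point needing care is matching conventions. The fundamental class $[N]_K$ used for the left Poincar\'e duality map in (\ref{poincare-k-spin_c}) must be the same class produced by Jakob's construction from the ABS Thom class of the stable normal bundle, rather than one differing by a sign or a unit. I would check this by noting that both are defined through the same ABS orientation, so they agree by construction.

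The bijectivity of the two diagonal maps is not needed for commutativity: it is stated in (\ref{poincare-k-spin_c}), and combined with commutativity it shows that $\varphi$ is an isomorphism as well. For compact $\Spin^c$ manifolds with boundary the same argument applies verbatim, using the relative class $[N,\partial N]_{ABS}\in\K_n(N,\partial N)$, the relative cap product $\K^*(N)\otimes\K_n(N,\partial N)\to\K_{n-*}(N,\partial N)$, and the relative version of (\ref{jakob-compatibility-bis}).
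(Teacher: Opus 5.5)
Your proposal is correct and is exactly the argument the paper intends. The paper omits this proof as the analogue of Lemma \ref{lem.orientedpddeltafundjakobcommwphi}: apply the compatibility square \eqref{jakob-compatibility-bis} to the fundamental cycle, then evaluate $\varphi[N,1,\id_N]=(\id_N)_*(1\cap[N]_{ABS})=[N]_K$, which is what you do.
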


\subsection{Comparison with the Baum-Douglas K-homology group}
We can combine the isomorphism $\kappa$ with the isomorphism between the geometric K-homology groups of Baum-Douglas and the analytic K-homology groups established in \cite{baumhigsonschick},
\begin{equation*}
	\mu: \K^{\geo}_*(X) \lra \K^{\an}_*(X)
\end{equation*}
to obtain an isomorphism
\begin{equation*}
	\kappa^{-1} \circ \mu: \K^{\geo}_*(X) \lra \K^{b}_*(X)
\end{equation*}
between the geometric K-homology groups and the K-homology groups described by Jakob in \cite{Jak:BDH}\footnote{This clarifies the equality of these two homology theories, already pointed out in {\em loc cit}.}.
This isomorphism has a simple description in terms of representative cycles. Recall, e.g.,  that a class in $\K^{\geo}_0(X)$ can be represented by a triple $(M,E,f)$ 
wherein $M$ is a $\Spin^c$ manifold, $E$ is a complex vector bundle, and 
$f:M \lra X$ is a continuous map. If $[E]$ denotes the class of $E$ in 
$\K^0(X)$ then $(M,u_{ABS}(\nu_M), [E], f)$ represents an element of 
$\K^{b}_0(X)$ and $\kappa^{-1}\circ \mu$ is induced by the map
\begin{equation*}
	(M,E,f) \mapsto (M,u_{ABS}(\nu_M), [E], f).
\end{equation*}
Indeed, we have that
\begin{equation*}
	\mu( M,E,f) = f_*[D_{E}] = \kappa(M,u_{ABS}(\nu_M), [E], f)
\end{equation*}
where $[D_E]$ denotes the $\K^{\an}_*$-class of the $\Spin^c$ operator on $M,$ twisted by (some choice of connection on) $E.$

\section{Compatibility of Gysin maps for oriented inclusions and the signature operator}\label{sec:CompHoriented}
 
Using the isomorphism $\kappa$ from the previous section we may compare the $\KO^\topo_*[\smlhf]$ and $\K^\an_*[\smlhf]$ groups of a CW-complex $B$ by the map
\begin{equation}\label{eq:def-of-lambda}
	\lambda: \KO^\topo_n (B)[\smlhf] \lra \K^\an_n (B)[\smlhf] 
\end{equation}
defined as the composition
\begin{equation*}
	\xymatrix{
	\KO^\topo_n (B)[\smlhf]  \ar[r]^-{(\Psi^2)^{-1}} &
	\KO^\topo_n (B)[\smlhf] \ar[r]^-{c} &
	\K^\topo_n (B)[\smlhf]  \ar[r]^-{\varphi^{-1}} &
	\K^{b/2}_n (B) \ar[r]^-{\kappa} &
	\K^\an_n (B)[\smlhf] 
	}
\end{equation*}
where $\Psi^2$ is the second Adams operation, $c$ is complexification, and $\varphi$ is the map from \eqref{jacob-bis}.

Our aim in this section is to establish that $\lambda$ intertwines the Gysin maps induced by normally nonsingular inclusions, so we start by recalling the definition of these maps.

\begin{defn} \cite[p.46]{GorMac:SMT} \label{def:nnsi}
An inclusion  $j:B \hookrightarrow Y$
of locally compact Hausdorff spaces is said to be 
{\bf normally nonsingular}\footnote{In Part 1, when discussing normally nonsingular inclusions of stratified spaces, we had the extra requirement that $\phi$ was a stratified diffeomorphism and that the normal bundle was oriented.} 
if there is a vector bundle $\nu$ over $B,$ with total space $X,$ 
a neighborhood $U$ of $B$ in $Y,$ and a homeomorphism $\phi: X \to U$ such that 
the diagram
\begin{equation*}
	\xymatrix{
	X \ar[r]^-{\phi} & U  \\
	B \ar@{^(->}[u] \ar[r]^j_\simeq & j(B), \ar@{^(->}[u]
} \end{equation*}
commutes, where the left hand vertical map is the zero section inclusion.
The bundle $\nu$ is referred to as the normal bundle of the inclusion and $U$ is referred to as a tubular neighborhood of $B$ in $Y.$
\end{defn}

The desired result is thus:

\begin{thm}\label{thm.compatibility-immersions}
Let $B,Y$ be compact topological spaces having the homotopy type of a finite CW-complex.
Let $g:B\hookrightarrow Y$ be a normally nonsingular inclusion 
with oriented normal bundle $\nu$ of rank $r$. Then the diagram 
\begin{equation}\label{9}
 \xymatrix{
\KO^\topo_n (Y)[\smlhf]  \ar[d]_-{g^!_\topo} \ar[r]^-\lambda
 & \K^\an_n (Y)[\smlhf] \ar[d]^-{g^!_\an} \\
\KO^\topo _{n-r} (B)[\smlhf]  \ar[r]^-\lambda
 & \K^\an_{n-r} (B)[\smlhf] 
} \end{equation}
commutes.
\end{thm}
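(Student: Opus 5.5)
The plan is to factor $\lambda$ through the bordism model $\K^{b/2}_*$ and to describe both Gysin maps on Jakob cycles via transversality. First I would split the square \eqref{9} into two squares. The first square compares $g^!_\topo$ on $\KO^\topo_*[\smlhf]$ with the Gysin map on $\K^\topo_*[\smlhf]$. The latter is defined by capping with the Thom class $\Delta_\cplx(\nu)$ and then applying the Thom isomorphism. Since $(\Psi^2)^{-1}$ and $c$ are ring-spectrum maps and $\Delta_\cplx(\nu)=c(\Psi^2)^{-1}\Delta_\SO(\nu)$ by definition, this first square commutes by naturality of cap products and Thom classes under ring morphisms (as in Lemma \ref{lemma:delta-complex-orientation}). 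So it suffices to prove that $\kappa\circ\varphi^{-1}$ intertwines the $\Delta_\cplx$-Gysin map on $\K^\topo_*[\smlhf]$ with $g^!_\an$.

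Second, I would define a geometric Gysin map $g^!_b:\K^{b/2}_n(Y)\to\K^{b/2}_{n-r}(B)$. Given a cycle $[M,\Delta_\cplx(\nu_M),x,f]$, I would use the cellular structure and Thom transversality (as Sullivan and Browder do for non-manifold $B$) to homotope $f$ to be transverse to $B$ inside the tubular neighbourhood $U\cong X$. Then $N:=f^{-1}(B)$ is a smooth submanifold of $M$ with normal bundle $f^*\nu$. It is oriented because $M$ and $\nu$ are, and the inclusion $i:N\hookrightarrow M$ is an oriented embedding. I set
\[ g^!_b[M,\Delta_\cplx(\nu_M),x,f]:=[N,\Delta_\cplx(\nu_N),i^*x,f|_N]. \]
Well-definedness under bordism follows by making the bordism transverse as well. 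Well-definedness under vector bundle modification follows from the base change formula of Lemma \ref{lem.basechangecohomgysin}, since the transverse preimage of $S(V\oplus1)$ is $S(V|_N\oplus1)$.

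Third, I would check $\varphi\circ g^!_b=g^!_\topo\circ\varphi$. Using \eqref{equ.defjakobvarphikbhalf} and naturality, this reduces to the classical fact that the topological Gysin map sends $f_*(x\cap\Delta_\cplx(M))$ to $(f|_N)_*(i^*x\cap\Delta_\cplx(N))$. That fact holds because the Thom class of $f^*\nu$ pulls back from $\Delta_\cplx(\nu)$, and because $\Delta_\cplx$ is multiplicative: $\Delta_\cplx(\nu_N)=\Delta_\cplx(\nu_M)|_N\cdot\Delta_\cplx(f^*\nu)$, using Lemma \ref{lem.orientedpddeltafundjakobcommwphi}. Finally I would check $\kappa\circ g^!_b=g^!_\an\circ\kappa$. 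For $x\in\K^0$ this reads
\[ g^!_\an f_*\big([[x]]\otimes\sign_K(M)\big)=(f|_N)_*\big([[i^*x]]\otimes i^!_\an\sign_K(M)\big). \]
The right-hand side is handled by Lemma \ref{lem.gysinandcap-bis} together with Hilsum's formula \eqref{equ.hilsumgysinrestrofsignk}. For the left-hand side I need a base change relation $g^!_\an\circ f_*=(f|_N)_*\circ i^!_\an$ for transverse squares. I would prove this in the correspondence picture: $g^!_\an$ is right multiplication by $\Sigma(g)$, given by restriction to $U$, the map $\phi$, and $\Sigma(\pi)^{-1}$, and the Kasparov product with $f_*$ of a transverse correspondence is exactly the fibre product. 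The odd case $x\in\K^1$ reduces to the even one by suspension with $S^1$, since $g\times\id$ is again normally nonsingular and transversality is preserved.

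I expect the main obstacle to be this analytic base change step. Because $B$ and $Y$ are only compact spaces, and not manifolds, the Connes--Skandalis/Hilsum correspondence calculus does not apply directly to $\Sigma(g)$. I would first try to work entirely inside $U\cong X$. Excision lets me replace $f$ by its restriction $f^{-1}(U)\to X$, which after a fibrewise homotopy is the bundle map $f^*\nu\to\nu$. Then the identity becomes naturality of the Thom element $\Sigma(\pi)$ under pullback of oriented bundles along $f|_N$, which can be checked using families of signature operators along the fibres.
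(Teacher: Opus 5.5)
Your proposal is correct and follows essentially the paper's route. It factors $\lambda$ through $\K^{b/2}_*$, handles $c\circ(\Psi^2)^{-1}$ by multiplicativity, defines the Gysin map on Jakob cycles by transversality, and compares it with $g^!_\an$ via Lemma \ref{lem.gysinandcap-bis}, Hilsum's formula, analytic base change and an $S^1$-suspension for odd $x$. The differences are organizational: you check $\varphi\circ g^!_b=g^!_\topo\circ\varphi$ directly, whereas the paper proves $j^!_\pitchfork=\Delta_\cplx(\nu)\cap'-$ (Proposition \ref{lemma:gysin-geo-trans-bis}) and then uses Jakob's compatibility of $\varphi$ with cap products; and you sketch the analytic base change $g^!_\an f_*=(f|_N)_*\, i^!_\an$ (excising to the tube, then naturality of $\Sigma(\pi)$ under pullback), a step the paper only asserts.
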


The rest of this section is devoted to the proof of this result. 

To start with, note, using the notation from Definition \ref{def:nnsi} and denoting the inclusion of $B$ into $D\nu$ by $j,$ that both $g^!_\topo$ and $g^!_\an$ are defined by the composition of 3 homomorphisms; the first
is induced by the restriction to $U$, the second one is $\phi_*$ and the third is given by
$$ \KO^\topo_n (D \nu,S\nu)[\smlhf] \xrightarrow{\Delta_{{\rm SO}} (\nu)\cap -} \KO^\topo_{n-r} (B)[\smlhf]
\quad
\text{and}
\quad
\K^\an_n (D \nu,S\nu)[\smlhf] \xrightarrow{j^!_\an} \K^\an_{n-r} (B)[\smlhf]
$$
respectively.
The part of the diagram corresponding to the first two maps commutes by functoriality.
The main step is therefore the proof of the commutativity of the following diagram, where we abbreviate
\begin{equation*}
	\gamma = c\circ (\Psi^2)^{-1},
\end{equation*}
\begin{equation}\label{commutativity-immersions-main} 
\xymatrix{
	\KO^\topo_n (D \nu,S\nu)[\smlhf] \ar[d]_-{\Delta_{{\rm SO}} (\nu)\cap -} \ar[r]^-\gamma & 
	\K^\topo_n (D\nu,S\nu)[\smlhf] \ar[d]_-{\Delta_{\mathbb{C}} (\nu)\cap -} & 
	\K^{b/2}_n (D\nu,S\nu) \ar[l]_-{\varphi '} \ar[d]_-{\Delta_{\mathbb{C}} (\nu)\cap' -} \ar[r]^-\kappa & 
	\KK_n (C_0 X,\cplx)[\smlhf] \ar[d]^-{j^!_\an} \\ 
	\KO^\topo_{n-r} (B)[\smlhf] \ar[r]_-\gamma & 
	\K^\topo_{n-r} (B)[\smlhf]  & 
	\K^{b/2}_{n-r} (B) \ar[l]^-{\varphi '}\ar[r]_-\kappa & 
	\KK_{n-r} (C B,\cplx)[\smlhf] 
} 
\end{equation}

\begin{prop}
The first square on the left of \eqref{commutativity-immersions-main} commutes.
\end{prop}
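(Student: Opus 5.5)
The first square asserts that for $a\in \KO^\topo_n(D\nu,S\nu)[\smlhf]$,
\[ \gamma\bigl(\Delta_\SO(\nu)\cap a\bigr) = \Delta_\cplx(\nu)\cap \gamma(a), \]
where $\Delta_\cplx(\nu) = \gamma(\Delta_\SO(\nu))$. The proof rests on the multiplicative properties of the stable operations involved.

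First, $\gamma = c\circ(\Psi^2)^{-1}$ is a morphism of ring spectra $\KO[\smlhf]\to\K[\smlhf]$. Both $\Psi^2$ (hence its inverse on $\KO[\smlhf]$) and complexification $c$ are ring maps of ring spectra; this was already used in Lemma \ref{lemma:delta-complex-orientation}. Such a ring morphism $\rho: E\to F$ induces natural transformations on cohomology and homology that are compatible with cap products, in the sense that
\[ \rho_*(u\cap a) = \rho^*(u)\cap \rho_*(a), \qquad u\in E^*(D\nu,S\nu),\ a\in E_*(D\nu,S\nu). \]
The cap product is the composite $E\wedge E\to E$ applied to $u\wedge a$ after the diagonal $(D\nu,S\nu)\to D\nu\times(D\nu,S\nu)$, paired into $E^*(D\nu,S\nu)\otimes E_*(D\nu,S\nu)\to E_*(D\nu)\cong E_*(B)$. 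Since $\rho$ commutes with the multiplications up to homotopy, the formula follows by naturality; this is a standard fact (e.g.\ Rudyak, Ch.~II/V).

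Next, I would identify the Thom classes. By definition $\Delta_\cplx(\nu) = c(\Psi^2)^{-1}\Delta_\SO(\nu)$, that is, $\Delta_\cplx(\nu) = \gamma^*(\Delta_\SO(\nu))$ as an element of $\K^r(D\nu,S\nu)[\smlhf]$, using $\Th\nu \simeq D\nu/S\nu$. Applying the formula with $u=\Delta_\SO(\nu)$, the top-then-right path sends $a$ to $\Delta_\cplx(\nu)\cap\gamma(a)$, while the left-then-bottom path sends it to $\gamma(\Delta_\SO(\nu)\cap a)$. These two agree, and the square commutes.

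The one point needing care is that $(\Psi^2)^{-1}$ is genuinely a stable ring operation on $\KO[\smlhf]$, i.e.\ that the stable Adams operation $\Psi^2$ (which on $\pi_{4k}$ is multiplication by $2^{2k}$ and is invertible after inverting $2$) is a map of ring spectra whose inverse is again one. I would justify this by noting that the inverse of a ring spectrum map that is an equivalence is itself a ring map up to homotopy. I would also check that no degree-dependent normalization, such as $2^{-2k}$ factors, intervenes in the grading conventions. Since the paper already takes $\gamma$ as a ring morphism in Lemma \ref{lemma:delta-complex-orientation}, I would simply invoke that.
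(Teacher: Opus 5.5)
Your proposal is correct and follows essentially the same route as the paper's proof: $\gamma = c\circ(\Psi^2)^{-1}$ is a morphism of ring spectra, so it is compatible with cap products, and specializing the cohomology variable to $\Delta_\SO(\nu)$ gives the square because $\gamma\Delta_\SO(\nu) = \Delta_\cplx(\nu)$ by definition. Your added remarks, on the relative cap product over $(D\nu,S\nu)$ and on the inverse of a ring equivalence again being a ring map, are sound and only make explicit what the paper leaves implicit.
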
 
\begin{proof}
Both the stable second Adams operation $\Psi^2: \KO \smlhf \to \KO \smlhf$
and the complexification $c: \KO \smlhf \to \K \smlhf$ are multiplicative morphisms
of ring spectra. Thus their composition $\gamma: \KO \smlhf \to \K \smlhf$
is multiplicative. It follows that the diagram
\[ \xymatrix{
\KO^* (X)[\smlhf] \otimes \KO_* (X)[\smlhf] \ar[d]_{\cap_\KO} 
   \ar[r]^-{\gamma \otimes \gamma } &
  \K^* (X)[\smlhf] \otimes \K_* (X)[\smlhf] \ar[d]^{\cap_\K} \\
  \KO_* (X)[\smlhf] \ar[r]_\gamma & \K_* (X)[\smlhf] 
} \]
commutes. Hence, fixing the cohomological variable to be the Sullivan orientation,
\[ \xymatrix{
\KO_* (X)[\smlhf] \ar[d]_{\Delta_{SO} (\nu)\cap -} \ar[r]^-{\gamma \otimes \gamma } &
  \K_* (X)[\smlhf] \ar[d]^{\gamma \Delta_{SO} (\nu) \cap -} \\
  \KO_* (X)[\smlhf] \ar[r]_\gamma & \K_* (X)[\smlhf] 
} \]
commutes. By definition, $\gamma \Delta_{SO} (\nu) = \Delta_{\cplx} (\nu)$.\\
\end{proof}

\medskip
\noindent
The commutativity of the central square of \eqref{commutativity-immersions-main} is shown in \cite{Jak:BDH}.

\medskip
\noindent
Thus to prove Theorem \ref{thm.compatibility-immersions} we only need to show the commutativity of the square on the right in \eqref{commutativity-immersions-main}. To carry this out, it will be convenient to make use of transversality results to define a map
\begin{equation*}
	j^!_{\pitchfork}: \K^{b/2}_n (D\nu,S\nu) \lra \K^{b/2}_{n-r} (B)
\end{equation*}
which we will show is a different but equivalent description of the Gysin map in $\K^{b/2}_*.$ We start by recalling the notion of transversality relevant in this context from \cite[\S II.2]{browder}.
\begin{defn}
Suppose $B$ is a finite complex, $B \hookrightarrow Y$ is a normally nonsingular 
inclusion, and $f:M \to Y$ is a continuous map from a smooth manifold $M.$ 
Let $r$ be the rank of the normal bundle of $B$ in $Y$ and let $n$ denote the 
dimension of $M.$ We say that $f$ is {\bf transverse} to $B$ if 
$f^{-1}(B) =: N$ is a smooth submanifold of $M$ (of dimension $n-r$) 
with normal bundle $\nu'$ and $f$ restricted to a 
sufficiently small
tubular neighborhood of $N$ in $M$ is a linear bundle map of $\nu'$ into $\nu,$
which is an isomorphism on every fiber.
\end{defn}

Thom's transversality theorem (see Browder \cite[p. 34, Thm. II.2.1]{browder}) says, with the notation as above, that: 
\begin{quote}
Let $A$ be closed in $M$ and suppose that $f$ restricted to an open neighborhood of $A$ is already transverse to $B.$ Then there is a homotopy of $f \text{ rel }A$ to 
$\widetilde f$ such that $\widetilde f$ is transverse to $B.$
\end{quote}
Using this theorem we may, starting with an element $[M, \Delta_{\cplx}(\nu_M), x, f:M\to D\nu]\in \K^{b/2}_* (D\nu,S\nu)$
with $x\in \K^* (M)[\smlhf],$ replace $f$ with a homotopic map $\widetilde{f}: (M,\partial M)\to (D\nu, S\nu)$ such that $\widetilde{f}$ is transverse to the zero-section $B$ in $D\nu$. In particular, $N = \widetilde f^{-1}(B)$ is a smooth submanifold of $M^\circ$ and we obtain a vector bundle isomorphism
\begin{equation} \label{equ.nunisognu}
\nu_N \cong g^* \nu, 
\end{equation} 
where $g:= \widetilde{f}|: N\to B$. We will write $V_N$ for the total space of $\nu_N$. The smooth manifold $N$ is compact and has empty boundary. It is oriented, since both $M$ and $\nu_N$ are oriented. We denote by $\iota:N\to M$ (that is, iota) the inclusion.

\begin{defn}\label{def:TransverseGysin}
With the notation as above, to each  $[M, \Delta_{\cplx}(\nu_M), x, f:M\to D\nu] \in \K^{b/2}_i (D\nu, S\nu) $ we associate
$$j^!_{\pitchfork} [M, \Delta_{\cplx}(\nu_M), x, f:M\to D\nu]
  := [N, \Delta_{\cplx}(\nu_N), \iota^* (x), g:N\to B]$$
\end{defn}

\begin{prop}\label{prop-fork-welldefined}
The assignment
$ j^!_{\pitchfork}$ induces a well-defined homomorphism $j^!_{\pitchfork}: \K^{b/2}_n (D\nu,S\nu)
\to \K^{b/2}_{n-r} (B)\,.$
\end{prop}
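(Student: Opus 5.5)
Well-definedness has to be checked at three levels: the choice of transverse representative, the orientation data carried by $N$, and the relations defining $\K^{b/2}_*$ (additivity, bordism, vector bundle modification).

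\emph{Independence of the transverse representative.} Let $\widetilde f_0,\widetilde f_1$ be two transverse maps homotopic to $f$ as maps of pairs $(M,\partial M)\to(D\nu,S\nu)$. Take a homotopy $H:M\times[0,1]\to D\nu$ between them. Its ends are already transverse near $M\times\{0,1\}$. Near $\partial M\times[0,1]$ it maps into $S\nu$, hence misses $B$, so it is vacuously transverse there. Thom's transversality theorem, applied rel $A=M\times\{0,1\}\cup\partial M\times[0,1]$, gives a homotopy $\widetilde H$ transverse to $B$. Then $W=\widetilde H^{-1}(B)$ is a compact manifold with $\partial W=N_0\sqcup N_1$. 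It carries the class $\pro^*\iota_W^*(x)$, which restricts to $\iota_k^*x$ on $N_k$, and the map $\widetilde H|_W:W\to B$. So $(W,\ldots)$ is a bordism between $[N_0,\Delta_\cplx(\nu_{N_0}),\iota_0^*x,g_0]$ and $[N_1,\ldots]$.

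\emph{Orientations.} The orientation on $N$ comes from $T M|_N\cong TN\oplus g^*\nu$ via (\ref{equ.nunisognu}). The stable normal bundle therefore satisfies $\nu_N\cong \nu_M|_N\oplus g^*\nu$, and the induced orientation is the one used for $N$. Sullivan's class, and hence $\Delta_\cplx$ (Lemma \ref{lemma:delta-complex-orientation}), is multiplicative and natural for oriented bundles. So the Thom class of $\nu_N$ is the one induced from $\nu_M$, $\nu$ and the transversality isomorphism. The same reasoning shows the orientations on $W$ restrict correctly, with the signs $(-1)^i$ required in relation (3), provided the boundary orientation convention is fixed consistently.

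\emph{Compatibility with the relations.} Relations (1) and (2) are immediate, because $\iota^*$ is additive and preimages of disjoint unions are disjoint unions. For the bordism relation (3), let $(W,u_W,y,G)$ be a bordism in $(D\nu,S\nu)$. The part $\partial W\setminus(M_1\sqcup M_2)$ maps to $S\nu$, away from $B$. After first making the $f_i$ transverse, make $G$ transverse rel a neighbourhood of $M_1\sqcup M_2$ and of the rest of the boundary. Then $G^{-1}(B)$, with $y$ restricted to it and the map into $B$, is a bordism between the transverse preimages of the ends; its boundary lies entirely over the $M_i$. Only relation (4), vector bundle modification, remains.

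\emph{Vector bundle modification (main obstacle).} Let $V\to M$ be oriented, $S=S(V\oplus1)\xrightarrow{\pi}M$, and take the transverse $\widetilde f$. Then $\widetilde f\circ\pi$ is already transverse to $B$, and its preimage is $\pi^{-1}(N)=S(V|_N\oplus1)$. The restriction $V|_N=\iota^*V$ is oriented, and this sphere bundle is exactly the vector bundle modification of $N$ along $\iota^*V$. It remains to check $\widetilde\iota^{\,*}\sigma_!(x)=\sigma_{N!}(\iota^*x)$, where $\widetilde\iota:S(V|_N\oplus1)\to S$ is the inclusion and $\sigma_N$ the section over $N$. This is exactly the base change formula of Lemma \ref{lem.basechangecohomgysin}, applied to the cartesian square given by $\iota:N\to M$ and the pullback bundle $\iota^*V$. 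The preimage of the modified cycle is therefore the modification of the preimage. The delicate points are that the Thom classes $\Delta_\cplx(\iota^*V)$ and the induced orientations of the normal bundles agree, again by naturality of $\Delta_\cplx$, and that the degree shifts match, giving degree $n-r$ in both cases.
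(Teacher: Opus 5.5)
Your proposal is correct and follows the paper's argument. Bordism invariance comes from making the bordism transverse to the zero section and taking its preimage. Invariance under vector bundle modification comes from noting that $\widetilde f\circ\pi$ is automatically transverse, with preimage $S(V|_N\oplus 1)$, together with the base change identity $\iota_S^*\sigma_!(x)=(\sigma_N)_!\iota^*(x)$; the paper justifies this identity directly from Jakob's Thom-class definition, which is exactly the content of Lemma \ref{lem.basechangecohomgysin}. Your explicit check, via a transverse homotopy rel ends, that the result does not depend on the chosen transverse representative is left implicit in the paper, but it is a special case of the paper's bordism argument.
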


\begin{proof}
To check that the bordism relation on representatives leaves the map invariant we proceed as follows.
Suppose that $(M, \Delta_{\cplx}(\nu_M), x, f:M\to D\nu)$ and $(M', \Delta_{\cplx}(\nu_{M'}), x', f:M'\to D\nu)$ are cycles representing the same class in $\K^{b/2}_*(D\nu, S\nu)$ (in particular $f(\pa M)$ and $f'(\pa M')$ are subsets of $S\nu$) that are related by a bordism. That is, suppose that there is a manifold with bounday $W$ with $\pa W = M \cup M' \cup \omega$ (so $\omega$ is a bordism between $\pa M$ and $\pa M'$) and a 4-tuple $(W, \Delta_{\cplx}(\nu_W), x_W, F:W \to D\nu)$ whose restriction to $M,$ respectively $M',$ is equal to $(M, \Delta_{\cplx}(\nu_M), x, f:M\to D\nu),$ respectively $(M', \Delta_{\cplx}(\nu_{M'}), x', f:M'\to D\nu),$ and which satisfies
\begin{equation*}
	F(\omega) \subseteq S\nu.
\end{equation*}
Using Thom's transversality theorem, we may and do assume that $F,$ $f,$ and $f',$ are transverse to the zero section $B$ of $D\nu.$ Now we note that $F^{-1}(B)$ is a manifold with boundary which is a bordism between $f^{-1}(B)$ and $(f')^{-1}(B).$ Thus, with $N= f^{-1}(B),$ $g = f|_N$ and $N' = (f')^{-1}(B),$ $g' = f'|_{N'},$ we see that $(N, \Delta_{\cplx}(\nu_N), \iota^* (x), g:N\to B)$ and $(N', \Delta_{\cplx}(\nu_{N'}), (\iota')^* (x), g':N'\to B)$ represent the same element of $\K^{b/2}_{n-r} (B)$ as desired.

Next let us address the effect of bundle modification.
Consider $[M, \Delta_{\cplx}(\nu_M), x, f:M\to D\nu]\in \K^{b/2}_* (D\nu,S\nu)$ and let $V\to M$ be an orientable vector bundle; consider
$S:= S(V\oplus 1)\xrightarrow{\pi} M$.
By definition of bundle modification,  \[ (M, \Delta_{\cplx}(\nu_M), x, f:M\to D\nu) \sim (S,  \Delta_\cplx (\nu_S), \sigma_! (x), f \circ \pi: S\to D\nu). \]
Using the same notation as above Definition \ref{def:TransverseGysin}, we observe that $\widetilde{f}\circ \pi\simeq f\circ \pi$. Since $\pi$ is a smoothly locally trivial projection over a smooth manifold (namely $M$),
it is transverse to any submanifold of  $M$.
Hence 
$\widetilde{f}\circ \pi$ is transverse to $B$ in $D\nu$. Consider then 
$$N_S:= (\widetilde{f}\circ \pi)^{-1} (B)\,;\quad N_S \xhookrightarrow{\,\iota_S} S\quad\text{and}\quad g_S:= \widetilde{f}\circ \pi |_{N_S}: N_S\to B\,.$$
Observe that $N_S=S(V|_N\oplus 1)$; let $\sigma_N: N\to S(V|_N\oplus 1)$ be the section induced by $\sigma_N: N\to V|_N\oplus 1$,  $\sigma_N (n)= (0,1_n)$; then the following diagram is commutative

\[ \xymatrix{
N_S \ar[r]^-{\iota_S} \ar[d] & S(V\oplus 1) \ar[d] \\
N \ar@/^1pc/[u]^{\sigma_N} \ar[r]_\iota & M \ar@/_1pc/[u]_\sigma
} \]
We then have
\begin{align*}
j^!_{\pitchfork}[ S,  \Delta_\cplx (\nu_S), \sigma_! (x), f \circ \pi: S\to D\nu]&= [N_S, \Delta_\cplx (\nu_{N_S}), \iota^*_S \sigma_! (x), 
g_S: N_S\to B]\\&= [S(V|_N\oplus 1), \Delta_\cplx (\nu_{S(V|_N\oplus 1)}), (\sigma_N)_! \iota^* (x),g_S: S(V|_N\oplus 1)\to B]
\end{align*}
where the equality $ (\sigma_N)_! \iota^* (x)=\iota^*_S \sigma_! (x)$ can be justified by adopting 
the definition of  Gysin map in $\K^*$ as in \cite[p. 75]{Jak:BDH}, and then using the naturality of the Thom class and
the fact that $\iota^*$ is multiplicative.
We recognize in the last term the bundle modification of $[N,\Delta_\cplx (\nu_N), \iota^* (x), g:N\to B]$ and since the latter
is, by definition,  $j^!_{\pitchfork} [M, \Delta_{\cplx}(\nu_M), x, f:M\to D\nu]$ we conclude that
\[ j^!_{\pitchfork}[ S,  \Delta_\cplx (\nu_S), \sigma_! (x), f \circ \pi: S\to D\nu]= j^!_{\pitchfork} [M, \Delta_{\cplx}(\nu_M), x, f:M\to D\nu]\]
which is precisely what we wanted to show.
\end{proof}

\bigskip
\noindent
 We then have the crucial

\begin{prop}\label{lemma:gysin-geo-trans-bis}
For any $\alpha\in \K^{b/2}_* (D\nu,S\nu)$ we have 
\begin{equation}\label{pitch'} j^!_{b/2} \alpha=  j^!_{\pitchfork} \alpha
\end{equation}
with $$j^!_{b/2} \alpha:= \Delta_{\mathbb{C}} (\nu)\cap' \alpha$$
\end{prop}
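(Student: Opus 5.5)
The plan is to transport the identity \eqref{pitch'} to topological K-homology via Jakob's isomorphism $\varphi$, and then reduce it to a standard ``transverse preimage $=$ cap with Thom class'' statement for the Sullivan orientation. Since $\varphi: \K^{b/2}_*(-) \to \K^\topo_*(-)[\smlhf]$ is an isomorphism, it suffices to prove
\[
\varphi\bigl(j^!_{\pitchfork}\alpha\bigr) = \varphi\bigl(\Delta_\cplx(\nu)\cap'\alpha\bigr).
\]
By the compatibility \eqref{jakob-compatibility}, in its relative version (the central square of \eqref{commutativity-immersions-main}), the right hand side equals $\Delta_\cplx(\nu)\cap\varphi(\alpha)$.

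Take a representative $\alpha=[M,\Delta_\cplx(\nu_M),x,f]$ with $f$ already transverse to $B$, and keep the notation $N$, $\iota$, $g$ of Definition \ref{def:TransverseGysin}. Using \eqref{equ.defjakobvarphikbhalf}, we must show
\[
g_*\bigl(\iota^*x\cap\Delta_\cplx(N)\bigr) \;=\; \Delta_\cplx(\nu)\cap f_*\bigl(x\cap\Delta_\cplx(M,\partial M)\bigr) \quad\text{in } \K^\topo_{*}(B)[\smlhf],
\]
where the right hand side is read through the Thom isomorphism $\K_*(D\nu,S\nu)\to\K_{*-r}(B)$. The argument proceeds in three steps.

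\begin{enumerate}
\item \emph{Push the Thom class back to $M$.} By naturality of the cap product,
\[
\Delta_\cplx(\nu)\cap f_*(y) = g_*\bigl(f^*\Delta_\cplx(\nu)\cap y\bigr), \qquad y = x\cap\Delta_\cplx(M,\partial M).
\]
Here $f^*\Delta_\cplx(\nu)$ lies in $\K^r(M,M\setminus N^\circ)[\smlhf]$. After excision to a tubular neighbourhood $V_N$, it is $\K^r(D\nu_N,S\nu_N)[\smlhf]$, and we map further to $B$ through the bundle map $\nu_N\cong g^*\nu$ of \eqref{equ.nunisognu}.

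\item \emph{Identify the pulled-back class.} Transversality makes $f|_{V_N}$ a fibrewise isomorphism onto $\nu$, which is exactly the isomorphism \eqref{equ.nunisognu}. Sullivan's Thom class is natural for pullbacks of oriented bundles, and $\Psi^2$ and $c$ preserve this naturality. Hence $f^*\Delta_\cplx(\nu)$ corresponds to $\Delta_\cplx(\nu_N)$. Graded commutativity and associativity of the cap product then let us move $x$ past the Thom class, so it contributes $\iota^*x$ after restriction to $N$.

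\item \emph{The key identity.} It remains to show
\[
\Delta_\cplx(\nu_N)\cap\Delta_\cplx(M,\partial M)\big|_{V_N} = \Delta_\cplx(N)
\]
under the Thom isomorphism $\K_n(D\nu_N,S\nu_N)\cong\K_{n-r}(N)$. Note that the stable normal bundles satisfy $\nu_{N}^{\mathrm{stab}} \cong \nu_M^{\mathrm{stab}}|_N\oplus\nu_N$. Sullivan's orientation comes from a ring map $\MSPL\to\KO[\smlhf]$, and $\gamma=c(\Psi^2)^{-1}$ is multiplicative. Together these give multiplicativity under Whitney sum:
\[
\Delta_\cplx(\nu_M|_N\oplus\nu_N) = \Delta_\cplx(\nu_M)|_N\cdot\Delta_\cplx(\nu_N).
\]
Once this multiplicativity is in place, the identity reduces to the general fact that, for an $h$-oriented embedding, the $h$-fundamental class of a submanifold is the cap of the ambient fundamental class with the Thom class of the normal bundle. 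This fact is proved by comparing Pontrjagin--Thom collapses for an embedding $N\subset M\subset\real^{n+K}$, as in Rudyak \cite{Rud:TSOC}.
\end{enumerate}

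I expect step 3 to be the main obstacle. Dualizing the Thom class of the stable normal bundle, which is how $[M]_h$ is defined in Jakob's setup, into a statement about the Thom class of the geometric normal bundle $\nu_N$ requires care with sign and orientation conventions, including the $(-1)^i$ conventions in bordisms. It also requires care with the relative (boundary) case. I would first treat $\partial M=\varnothing$ and then handle the boundary by noting $N\subset M^\circ$, so that excision localizes everything near $N$. The $\K^1$ case of $x$ needs no separate treatment, since $x$ only enters through the cap product.
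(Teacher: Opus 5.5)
Your proposal is correct, but it takes a different route from the paper's. The paper never leaves $\K^{b/2}_*$. It proves base-change lemmas for both $j^!_\pitchfork$ and $j^!_{b/2}$ along $f$ (Lemmas \ref{lem.basechangepitchfork-oriented} and \ref{lem.basechangeThom-oriented}). Together with the restriction $i^!$ to a tubular neighbourhood $D\nu'$ of $N$, these reduce the claim to the cycle $[D\nu', i^*x,\id]$, whose identity map is tautologically transverse with preimage the zero section. It then evaluates $(j_N)^!_{b/2}$ on this cycle through the Poincar\'e-duality description \eqref{explicit-board} quoted from Boardman. You instead push everything into $\K^\topo_*(-)[\smlhf]$ via $\varphi$. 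There, base change is just naturality of the relative cap product, and the transverse cycle is evaluated by Jakob's formula \eqref{equ.defjakobvarphikbhalf}. All the weight then falls on the identity $\Delta_\cplx(\nu_N)\cap\Delta_\cplx(M,\partial M)|_{V_N}=\Delta_\cplx(N)$, which you derive from Whitney-sum multiplicativity of the Sullivan orientation and the Pontrjagin--Thom description of fundamental classes. The paper's appeal to Boardman tacitly supplies exactly this identity: \eqref{explicit-board} for $N\subset D\nu'$ amounts to $\Delta_\cplx(\nu')\cap'[(D\nu',S\nu'),1,\id]=(j_N)_*[N,1,\id]$ for the Sullivan orientations. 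Your version therefore makes the orientation bookkeeping $\nu_N^{\mathrm{stab}}\cong\nu_M^{\mathrm{stab}}|_N\oplus\nu_N$ explicit. Because $\varphi$ is injective, it also shows directly that $[N,\iota^*x,g]$ depends only on the class of $[M,x,f]$, so Proposition \ref{prop-fork-welldefined} comes out as a by-product. In exchange, the paper's version avoids the relative cap-product and excision bookkeeping near $N$, and it yields base-change statements inside the geometric model itself. The sign you flag when commuting $x$ past the Thom class is a matter of conventions. The paper's use of \eqref{explicit-board} passes over the same sign, so it is not a gap in your argument.
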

\begin{proof}
First we make the remark that if $N$ is orientable smooth and $j_N:N\hookrightarrow D\nu$ is the inclusion
of $N$ as the zero section of an orientable bundle $\nu$ of rank $r$ then 
$j^!_{b/2} $, as a map from $\K^{b/2}_i (D\nu,S\nu)$ to  $\K^{b/2}_{i-r} (N)$, is given by  
\begin{equation}\label{explicit-board} 
\xymatrix@C=60pt{
	\K^{b/2}_i (D\nu,S\nu) & 
	\K^{b+r-i} (D\nu)[\smlhf] \ar[l]_{-\cap' [(D\nu,S\nu),1,\id]} \ar[r]^{j_N^*} & 
	\K^{b+r-i} (N)[\smlhf]  \ar[r]^{-\cap' [N,1,\id]} & 
	\K^{b/2}_{i-r} (N)
} \end{equation}
where the first and the last map are Poincar\'e duality isomorphisms that are defined by $\cap'$-product 
with the fundamental classes $[(D\nu,S\nu),1,\id]\in \K^{b/2}_{n+r}(D\nu,S\nu)$ and $[N,1,\id]\in \K^{b/2}_n (N)$ respectively. Here, for the sake of simplicity, we did not write the Sullivan orientation
into the notation of the fundamental classes. See \cite[Theorem 6.24]{boardman}.

 Let $[M, \Delta_{\cplx}(\nu_M), x, f:M\to D\nu]
  \in \K^{b/2}_* (D\nu,S\nu)$ where, without loss of generality, we can assume that $f$ is transverse 
to $B$ viewed as the zero-section of $\nu$. Our goal is to prove that 
\begin{equation}\label{equality-gysin}
j^!_{b/2}   [M, \Delta_{\cplx}(\nu_M), x, f:M\to D\nu]= j^!_{\pitchfork} [M, \Delta_{\cplx}(\nu_M), x, f:M\to D\nu]\,.
\end{equation}
We shall use the fact that, by definition,  $[M, \Delta_{\cplx}(\nu_M), x, f:M\to D\nu]=f_* [M, \Delta_{\cplx}(\nu_M), x, 
{\rm id}_M:M\to M]$.
Consider $N:=f^{-1} (B)$, an orientable manifold contained in the interior of $M$. Then $N$ admits 
a normal bundle, denoted $\nu'$; in fact $\nu'= (f|_N)^* \nu$.
We identify  $D\nu'$ with a tubular neighbourhood of $N$ in $M$ without making this identification explicit in our notation. We denote by iota, $\iota$,  the inclusion of $N$ into $M$ and observe that $\iota= i \circ j_N$ with $j_N: N\to D\nu'$ the inclusion as the zero-section and $i: D\nu' \to M$ the natural  
inclusion. 
Observe that there are two homomorphisms
$$\iota^!_{b/2}: \K^{b/2}_* (M,\partial M)\to \K^{b/2}_{*-r} (N),
  \quad \quad 
\iota^!_{\pitchfork}: \K^{b/2}_* (M,\partial M)\to \K^{b/2}_{*-r} (N)\,;$$
these are defined as
$$ \iota^!_{b/2}:=  (j_N)^!_{{b/2}} \circ i^!, 
  \quad\quad 
\iota^!_{\pitchfork}:=  (j_N)^!_{\pitchfork} \circ i^!$$
with $i^!: \K^{b/2}_* (M,\partial M)\to \K^{b/2}_* (D\nu' ,S\nu')$ the Gysin map induced by the codimension 0 inclusion $i: D\nu' \to M$. Notice that $i^!: \K^{b/2}_* (M,\partial M)\to \K^{b/2}_* (D\nu' ,S\nu')$ is the composition of the restriction homomorphism
$\K^{b/2}_* (M,\partial M)\to \K^{b/2}_* (M,(M\setminus N)\cup \partial M)$
followed by the inverse of the excision isomorphism
$${\rm e}: \K^{b/2}_* (D\nu',S\nu') \to 
  \K^{b/2}_* (M, (M\setminus N)\cup \partial M).$$ 
We shall need the following two lemmas.

\begin{lemma} \label{lem.basechangepitchfork-oriented}
(Base change for $j^!_\pitchfork$.)\\
Let $g:B' \to B$ be a continuous map between finite CW-complexes,
covered by a vector bundle map
\[ \xymatrix{
V' \ar[d] \ar[r]^G & V \ar[d] \\
B' \ar[r]_g & B
} \]
between oriented vector bundles,
which is oriented and a fiberwise isomorphism.
Then the diagram
\[ \xymatrix{
	\K^{b/2}_* (DV',SV') \ar[d]_{j^!_\pitchfork} \ar[r]^{G_*} & 
	\K_*^{b/2} (DV,SV) \ar[d]^{j^!_\pitchfork} \\
	\K^{b/2}_{*-r} (B') \ar[r]_{g_*} & 
	\K^{b/2}_{*-r} (B)
} \]
commutes.
\end{lemma}
\begin{proof}
Let $[M,\Delta_{\cplx}(\nu_M),x,f] \in \K^{b/2}_*(DV', SV')$ be any element.
Then
$$G_* [M,\Delta_{\cplx}(\nu_M),x,f] = [M,\Delta_{\cplx}(\nu_M),x, G\circ f].$$
Let $\widetilde{f}: (M,\partial M) \to (DV',SV')$ be a map which is 
homotopic to $f$ and transverse to $B'$ in $DV'$.
Let $N := \widetilde{f}^{-1} (B') \subset M$ denote the transverse
preimage of the zero section.
Thus 
$$
j^!_{\pitchfork} [M, \Delta_{\cplx}(\nu_M), x, f:M\to DV']= [N, \Delta_{\cplx}(\nu_N), \iota^* (x),  \widetilde{f}|_N: N\to B']$$
and consequently,
$g_* j^!_\pitchfork [M, \Delta_{\cplx}(\nu_M), x, f:M\to DV'] = [N, \Delta_{\cplx}(\nu_N), \iota^* (x), g \circ \widetilde{f}|_N]$.
We claim that $G\circ \widetilde{f}$ is transverse to $B$ in $DV$.
Indeed,
since $G$ is a vector bundle map and a fiberwise isomorphism,
the preimage 
\[ 
(G\circ \widetilde{f})^{-1} (B) = \widetilde{f}^{-1} (G^{-1} (B)) 
 =  \widetilde{f}^{-1} (B') 
\]
is precisely $N$, and $G\circ \widetilde{f}$ is a linear bundle
map and a fiberwise isomorphism from the normal bundle of $N$
in $M$ to the normal bundle of $B$ in $DV$, which is $V$.
Note also that $G \circ \widetilde{f}$ is homotopic to $G\circ f$.
Therefore,
\begin{align*}
j^!_\pitchfork G_*  [M, \Delta_{\cplx}(\nu_M), x, f:M\to DV'] &= j^!_\pitchfork 
[M, \Delta_{\cplx}(\nu_M), x, G\circ f:M\to DV]\\
   &= j^!_\pitchfork [ M, \Delta_{\cplx}(\nu_M), x, G\circ \widetilde{f}:M\to DV]\\
  &= [N,\Delta_{\cplx}(\nu_N), \iota^* x, g \circ \widetilde{f}|_N:N\to B].
\end{align*}
and the latter is precisely $g_* j^!_\pitchfork [M, \Delta_{\cplx}(\nu_M), x, f:M\to DV'].$
\end{proof}

\begin{lemma} \label{lem.basechangeThom-oriented}
(Base change for $j^!_{b/2}$.) 
Under the assumptions of Lemma \ref{lem.basechangepitchfork-oriented},
 the diagram
\[ \xymatrix{
	\K^{b/2}_* (DV',SV') \ar[d]_{j^!_{b/2}} \ar[r]^{G_*} & 
	\K_*^{b/2} (DV,SV) \ar[d]^{j^!_{b/2}} \\
	\K^{b/2}_{*-r} (B') \ar[r]_{g_*} & 
	\K^{b/2}_{*-r} (B)
} \]
commutes.
\end{lemma}
\begin{proof}
Recall that $j^!_{b/2}$ is defined as the composition of $\K^{b/2}_* (DV,SV)\xrightarrow{\Delta_\cplx (V)\cap' -} \K^{b/2}_{*-r} (V)$ and the inverse of the isomorphism $j_*: \K_{*-r}(B)\to \K_{*-r} (V)$, with $j: B\hookrightarrow V$
the inclusion of the zero-section. By functoriality we only need
to show the commutativity of 
\[ \xymatrix{
	\K^{b/2}_* (DV',SV') \ar[d]_{\Delta_\cplx (V')\cap' -} \ar[r]^{G_*} & 
	\K_*^{b/2} *(DV,SV) \ar[d]^{\Delta_\cplx (V)\cap' -} \\
	\K^{b/2}_{*-r} (DV') \ar[r]_{G_*} & 
	\K^{b/2}_{*-r} (DV)
} \]
By naturality of the Sullivan classes we know that $G^* \Delta_\cplx (V)= \Delta_\cplx (V')$, given that $G$ is orientation preserving. Thus for any $x\in \K^{b/2}_* (DV',SV')$ we have 
$$G_* (\Delta_\cplx (V')\cap' x)= G_* (G^* \Delta_\cplx (V)\cap' x)= \Delta_\cplx (V)\cap' G_* (x)$$
and the proof is complete.
\end{proof}

Let us go back to the proof of Proposition \ref{lemma:gysin-geo-trans-bis}.
Using Lemma 
\ref{lem.basechangepitchfork-oriented} we see easily that the following diagram is commutative:
\[ \xymatrix{
	\K^{b/2}_* (M,\partial M) \ar[d]_{\iota^!_{\pitchfork}} \ar[r]^{f_*} & 
	\K_*^{b/2} (D\nu,S\nu) \ar[d]^{j^!_{\pitchfork}} \\
	\K^{b/2}_{*-r} (N) \ar[r]_{(f|_N)_*} & 
	\K^{b/2}_{*-r} (B)
} \]
Indeed, we can break the above diagram into two diagrams:
\begin{equation}\label{2-diagrams}
 \xymatrix{
	\K^{b/2}_* (M,\partial M) \ar[d]_{i^!_{b/2}} \ar[r]^{f_*} & 
	\K_*^{b/2} (D\nu,S\nu)  \ar[d]^{=} \\
	\K^{b/2}_* (D\nu' ,S\nu') \ar[d]_{(j_N)^!_{\pitchfork}} \ar[r]^{(f|_{D\nu'})_*} & 
	\K_*^{b/2} (D\nu,S\nu) \ar[d]^{j^!_{\pitchfork}} \\
	\K^{b/2}_{*-r} (N) \ar[r]_{(f|_N)_*} & 
	\K^{b/2}_{*-r} (B)
}
\end{equation}
The upper square commutes by naturality, whereas the bottom diagram commutes by 
Lemma 
\ref{lem.basechangepitchfork-oriented}. Similarly, using Lemma \ref{lem.basechangeThom-oriented} we have $j^!_{b/2} \circ f_*= (f|_N)_* \circ \iota^!_{b/2}$.
Consider $[M, \Delta_{\cplx}(\nu_M), x,{\rm id}_M]\in \K^{b/2}_* (M,\partial M)$. We claim, and by the above arguments this will suffice for completing the proof of Proposition  \ref{lemma:gysin-geo-trans-bis},  that $$\iota^!_{b/2} [M, \Delta_{\cplx}(\nu_M), x,{\rm id}_M]=
\iota^!_{\pitchfork} [M, \Delta_{\cplx}(\nu_M), x,{\rm id}_M]\,.$$ To see this equality we observe that 
$i_{b/2}^! [M, \Delta_{\cplx}(\nu_M), x,{\rm id}_M]
= [D\nu', \Delta_{\cplx}(\nu_{D\nu'}), i^* x, {\rm id}_{D\nu'}]$ and so we are left with the task of proving that 
$$(j_{N})_{b/2}^!   [D\nu', \Delta_{\cplx}(\nu_{D\nu'}), i^* x, {\rm id}_{D\nu'}]= (j_{N})^!_{\pitchfork}  [D\nu', \Delta_{\cplx}(\nu_{D\nu'}), i^* x, {\rm id}_{D\nu'}].$$
Since $N$ is a closed manifold, we can describe $(j_N)^!_{b/2}$
as in \eqref{explicit-board} where, for the sake of simplicity, we do not write the Sullivan class into the notation of an element
in $\K^{b/2}_*$. Using this description, and the definition of $\cap'$, we have
\begin{align*}
(j_N)^!_{b/2}  [D\nu', i^* x, {\rm id}]
&= (j_N)^!_{b/2}  (i^* x\cap' [(D\nu', S\nu'),1,{\rm id}]) \\
&= (j^*_N)( i^* x) \cap' [(N,1, {\rm id}] \\
&= [N,  (j^*_N)(i^* x), \id] \\
&= (j_N)^!_{\pitchfork} [D\nu' ,i^* x, \id].
\end{align*}
Regarding the last equality, we observe that the identity map
$D\nu' \to D\nu'$ is already transverse to the zero section $N$
and the preimage is precisely $N$.
\end{proof}

\medskip
We can now return to establishing the commutativity of the right-most  square in \eqref{commutativity-immersions-main}.
Using the equality \eqref{pitch'} we just need to show that 
$$j^!_{\an} (\kappa \alpha)= \kappa ( j^!_{\pitchfork} \alpha)\quad\text{in}\quad \K_* (B)[\smlhf] $$
with $j:B\to D\nu$ the inclusion of $B$ into $D\nu$ as the zero section.
Let 
\begin{equation}\label{genericelement} [(M,\partial M), \Delta_{\cplx} (\nu_M),x, f:(M,\partial M)\to (D\nu, S\nu)] 
  \in \K^{b/2}_i (D\nu, S\nu) 
  \end{equation}
be an arbitrary element. Consider $x\in  \K^*(M)[\smlhf] $. We start with the case 
$x\in \K^0 (M)[\smlhf] $; we briefly denote the element appearing in \eqref{genericelement} as
$[M,x,f]$. We have, by the definition of $\kappa$,
\[
j^{!}_\an \kappa  [M,x,f]
= j^{!}_\an f_* \,\sign_K (M,x).
\]
We now consider $N:= f^{-1}(B)$, $g:= f|_N$ and $\nu'=g^* \nu$. Recall $\iota: N\hookrightarrow M$;
by base change, we have \[
j^!_\an  f_* \sign_K (M,x)
 = g_* \iota^!_{\an} \sign_K (M,x)\quad \text{in}\quad \K_* (B)[\smlhf] \]
Now, we know that
\[
 \sign_K (M,x)
  =   [[x]] \otimes \sign_K (M)
\]
and according to Lemma \ref{lem.gysinandcap-bis},
\[
  \iota^!_{\an} ([[x]] \otimes \sign_K (M))
 =  [[\iota^* (x)]] \otimes \iota^!_{\an} \sign_K (M).
\]
By the main result of Part 1 we have 
 $\iota^!_{\an} \sign_K (M)=   \sign_K (N)$.
  Thus we can write:

\[ 
g_* \iota^!_{\an}  \sign_K (M,x)=g_* ([[\iota^* x]] \otimes \iota^!_{\an}  \sign_K (M))
= g_* ([[\iota^* x]] \otimes  \sign_K (N)).
\]
On the other hand, 
\[
 g_* ([[\iota^* x]] \otimes   \sign_K (N))
  = g_*  \sign_K (N, \iota^* x).
\]
Thus, summarizing,
\begin{align*}
  j^{!}_\an \kappa [M,x,f]&=j^{!}_\an f_* \sign_K (M,x)=g_*  \sign_K (N,\iota^* x)\\
  &= \kappa [N, \iota^* x, g] 
  = \kappa j^!_{\pitchfork} [M,x,f]
\end{align*}
This part of the proof, that is when  $x\in \K^0 (M)[\smlhf] $,  is now complete since we have proved that $$j^{!}_\an \kappa [M,x,f]=\kappa j^!_{\pitchfork} [M,x,f]\,.$$
Next we consider  $x\in \K^1 (M)[\smlhf] $. We again denote briefly an element in  
$\K^{b/2}_* (D\nu, S\nu)$ as $ [M,x,f]$.
Recall that, by definition,
$$\kappa  [M,x,f]= (f \circ\pi_1)_* \sign_K (S^1\times M,\sigma_! (x), f\circ \pi_1) $$
with $\pi_1 : M\times S^1 \to M$ the projection onto the first factor and $\sigma:M\to M\times S^1$ the map 
sending $m$ to $(m, (1,0))$. Let
$$\widetilde{N}:=(f\circ \pi_1)^{-1} (B)\,; \;\;\;\widetilde{g}:= (f\circ \pi_1)|_{\widetilde{N}}: 
\widetilde{N}\to B\,;\;\;\; \widetilde{\iota}:\widetilde{N}\hookrightarrow M\times S^1\,.$$
Then, with justifications analogous to the ones we have just employed, we get
\begin{align*}
 j^{!}_\an \kappa [M,x,f]&= j^{!}_\an (f \circ\pi_1)_* \sign_K (S^1\times M,\sigma_! (x))\\
 &= \widetilde{g}_* \widetilde{\iota}^! \sign_K (S^1\times M,\sigma_! (x))\\
 &= \widetilde{g}_* \widetilde{\iota}^! ( [[\sigma_! (x)]]\otimes  \sign_K (S^1\times M))\\
 &= \widetilde{g}_* ([[\widetilde{\iota}^* \sigma_! (x)]]\otimes  \widetilde{\iota}^! (\sign_K (S^1\times M)))\\
 &= \widetilde{g}_* ([[\widetilde{\iota}^* \sigma_! (x)]]\otimes  \sign_K (\widetilde{N}))\\
 &=\kappa [\widetilde{N},\widetilde{\iota}^* \sigma_! (x),\widetilde{g}]\\
 &=\kappa j^!_{\pitchfork}[M\times S^1, \sigma_! (x),f \circ\pi_1]\\
 &=\kappa j^!_{\pitchfork}[M,x,f]
 \end{align*}
 where for the last equality we have used the proof of Proposition \ref{prop-fork-welldefined}.
 
This establishes the commutativity of the right-most square in \eqref{commutativity-immersions-main} and hence finishes the proof of Theorem \ref{thm.compatibility-immersions}.

\section{Compatibility of Gysin maps for K-oriented inclusions and the $\Spin^c$ Dirac operator}\label{sec:CompKoriented}

In section \ref{sec:CompHoriented} we have established the compatibility of the 
Gysin maps induced by an H-orientation. For analytic K-homology (tensored with 
$\bb Z[\tfrac12]$) the corresponding orientations, and hence Gysin maps, were 
induced by the signature operator. In this section we establish the compatibility of 
the Gysin maps induced by a K-orientation. Thus, for analytic K-homology we will 
require $\Spin^c$ structures and then the corresponding orientations and Gysin 
maps are induced by the $\Spin^c$ Dirac operator. (For normally nonsingular 
inclusions this requirement will be for the normal bundle of the inclusion.) As 
the arguments will not differ substantially from those in the previous section, 
we will be brief. 

\subsection{Topological and geometric Gysin maps for K-oriented inclusions}
Let $B$ and $Y$ be topological spaces. Let $g:B\to Y$ be a topologically normally nonsingular inclusion as in Definition \ref{def:nnsi} and assume that the corresponding normal bundle is K-oriented. Let $i: U\subset Y$ be an open tubular neighborhood of $B$ in $Y$
and let $\phi: U\to X$ be a homeomorphism to the total 
space $X$ of the $\Spin^c$ normal bundle $\nu$ with projection $p:X\to B$. 
Let $r$ be the rank of $\nu$ and let us denote by $j$ the inclusion of $B$ into $D\nu$ as the zero-section. 
The Gysin map $g^!_{\topo,\Spin^c}\equiv g^!_{\topo}$ in topological K-homology is obtained as the composition
\[ \K^\topo_n (Y) \to \K^\topo_n (U,U-B) \xlra{\cong}  \K^\topo_n ({\rm Th} \nu, \infty)
\xlra{u_K (\nu) \cap -} \K^\topo_{n-r} (B). \]
where $\K^\topo_n (U,U-B) \xlra{\cong} \K^\topo_n ({\rm Th} \nu, \infty)$ is induced by the homeomorphism $\phi$. The definition of $g^!_{b,\Spin^c}\equiv g^!_b$ is similar but we use $
\K^b_n ({\rm Th} \nu, \infty)
  \xlra{u_K (\nu) \cap' -} \K^b_{n-r} (B)$ in the last step.

\begin{prop} \label{prop.gysincomptopgeovbsoverspinc}
Let $B$ be a finite CW-complex and let $\nu$ be a $\K$-oriented vector
bundle of rank $r$ over $B$. Let $V$ be the total space of $\nu$ and
$j: B \hookrightarrow V$ the zero section. Then the diagram
\[ \xymatrix{
\K^\topo_i (D\nu,S\nu) \ar[d]_{j^!_{\topo,\Spin^c}} 
 & \K^{b}_i (D\nu,S\nu) \ar[l]_-{\varphi}^-\simeq 
    \ar[d]^{ j^!_{b,\Spin^c}} \\
\K^\topo_{i-r} (B) & \K^{b}_{i-r} (B) \ar[l]_{\varphi}^\simeq
} \]
commutes.
\end{prop}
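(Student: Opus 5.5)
The plan is to reduce the claim to Jakob's compatibility of $\varphi$ with cap products, exactly as in the central square of \eqref{commutativity-immersions-main} in the oriented case. By the definitions recalled just before the statement, both vertical maps are caps with the same Atiyah--Bott--Shapiro Thom class, followed by the zero-section identification. The topological map is $\K^\topo_i(D\nu,S\nu)\xrightarrow{u_K(\nu)\cap -}\K^\topo_{i-r}(D\nu)\xrightarrow{(j_*)^{-1}}\K^\topo_{i-r}(B)$. The geometric map $j^!_{b,\Spin^c}$ is the same composite, with $\cap'$ replacing $\cap$ and $\K^b$ replacing $\K^\topo$. I would therefore split the square into two squares, one for each step.

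For the upper square I invoke the relative form of \eqref{jakob-compatibility-bis}. Jakob's cap product
\[
\K^r(D\nu,S\nu)\otimes \K^b_i(D\nu,S\nu)\to \K^b_{i-r}(D\nu),\qquad y\cap'[M,u_{ABS}(\nu_M),x,f]=[M,u_{ABS}(\nu_M),f^*y\otimes x,f]
\]
is intertwined by $\varphi$ with the topological cap product (Jakob, \cite[p.~75, 3.2]{Jak:BDH}, in its relative version for pairs). Applying this with $y=u_K(\nu)$ gives $\varphi(u_K(\nu)\cap'\alpha)=u_K(\nu)\cap\varphi(\alpha)$. Here $f^*u_K(\nu)$ is a class on $(M,\partial M)$ relative to the boundary, so $f^*y\otimes x$ lands in absolute $\K$-theory as needed. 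For the lower square, $\varphi$ is a natural transformation of homology theories, so $\varphi\circ j_*=j_*\circ\varphi$. Since $j_*$ is an isomorphism in both theories (homotopy invariance), the inverses commute with $\varphi$ as well.

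The main obstacle is making sure Jakob's compatibility genuinely holds in the relative setting with a relative cohomology class, and with the ABS Thom class on both sides. Concretely, I would unwind $\varphi[M,u,f^*y\otimes x,f]=f_*((f^*y\cdot x)\cap[M,\partial M]_{ABS})$ and use associativity of cap and cup products together with naturality $f_*(f^*y\cap z)=y\cap f_*z$. This identifies it with $y\cap f_*(x\cap[M,\partial M]_{ABS})=y\cap\varphi[M,u,x,f]$. These are standard spectrum-level identities (Boardman), and the only care needed concerns signs and gradings.

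If one prefers to avoid the relative cap product, an alternative is to use the analogue of Proposition~\ref{lemma:gysin-geo-trans-bis} for $\Spin^c$ cycles. One would identify $j^!_b$ with $j^!_\pitchfork$ and then compute $\varphi$ on a transverse representative. This would, however, require a topological version of the transverse description, so I would take the cap-product route first.
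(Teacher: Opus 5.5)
Your proposal is correct and follows essentially the paper's argument: both vertical maps are $\cap$, respectively $\cap'$, with the same Thom class $u_K(\nu)\in \K^r(D\nu,S\nu)$, so the square commutes by Jakob's compatibility of $\varphi$ with cap products. Splitting off the zero-section identification (via naturality of $\varphi$) and unwinding $\varphi$ on a cap product only make explicit what the paper leaves implicit; the paper likewise applies the relative form of Jakob's compatibility without further comment.
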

\begin{proof}
The topological Gysin restriction $j^!_{\topo,\Spin^c}$ is by definition given by
multiplication with the Thom class $u_K(\nu) \in \K^r (D\nu,S\nu)$, given that $\nu$ is K-oriented. 
Similarly, $j^!_{\geo,\Spin^c}$ is by definition given by $\cap'$
multiplication with the same Thom class $u_K(\nu) \in \K^r (D\nu,S\nu)$. The statement then follows from
\eqref{jakob-compatibility}.
\end{proof}

\subsection{Gysin maps through transversality.}

Let $B$ be a finite CW-complex
and let $\nu$ be a $\K$-oriented vector
bundle of rank $r$ over $B$. Let $V$ be the total space of $\nu$ and
$j: B \hookrightarrow V$ the zero section. Consider an arbitrary element 
\[ [(M,\partial M), u_{ABS} (\nu_M), x, f:(M,\partial M)\to (D\nu, S\nu)] 
  \in \K^b_i (D\nu, S\nu) \]
  with $M$ a $\Spin^c$ manifold and $x\in \K^* (M)$.
  As for the oriented case already treated, by Thom's transversality theorem
(see Browder \cite[p. 34, Thm. II.2.1]{browder}),
$f$ is homotopic to a map
$\widetilde{f}: (M,\partial M)\to (D\nu, S\nu)$
such that $\widetilde{f}$ is transverse to the zero-section
$B$ in $D\nu$.
This means that $N^{n-r} := \widetilde{f}^{-1} (B)$
is a smooth submanifold of $M^\circ$ with normal vector bundle
$\nu_N$ such that $\widetilde{f}$ restricted to a tubular
neighborhood of $N$ in $M^\circ$ is a linear bundle map
$\nu_N \to \nu$. This defines in particular a vector bundle isomorphism
\begin{equation} \label{equ.nunisognu}
\nu_N \cong g^* \nu, 
\end{equation} 
where $g:= \widetilde{f}|: N\to B$.
We will write $V_N$ for the total space of $\nu_N$.
The smooth manifold $N$ is compact and has empty boundary.
It is oriented, since both $M$ and $\nu_N$ are oriented. Moreover, because of the 2-out-of-3 Lemma
(applied to $M$, $\nu$ and $N$) we have that $N$ is $\Spin^c$. We denote by iota, $\iota:N\hookrightarrow M$, the inclusion.

\begin{defn}
Consider $[(M,\partial M), u_{ABS} (\nu_M), x, f:(M,\partial M)\to (D\nu, S\nu)] 
  \in \K^b_i (D\nu, S\nu) $. Assume without loss of generality that $f$ is transverse 
  to the zero-section $B$ in $D\nu$. Let  $N := f^{-1} (B)$ as above.
  We set $$j^!_{b,\pitchfork}  [(M,\partial M), u_{ABS} (\nu_M), x, f:(M,\partial M)\to (D\nu, S\nu)] := 
[N,u_{ABS} (\nu_M),\iota^* (x), g:N\to B].$$
  \end{defn}
  
The behavior of this map is sufficiently similar to the H-oriented case that we omit the proofs of the following results.

\begin{lemma}\label{lemma:pitchfork} 
  $j^!_{b,\pitchfork}$ induces a well defined homomorphism $\K^b_i (D\nu, S\nu)\to 
  \K^b_{i-r} (B)$.
\end{lemma}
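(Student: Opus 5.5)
The plan is to follow the proof of Proposition \ref{prop-fork-welldefined} step by step, replacing the Sullivan orientations $\Delta_\cplx$ by Atiyah--Bott--Shapiro orientations. The one genuinely new ingredient is to check that all manifolds produced by transversality carry $\Spin^c$ structures compatible with the given ones. We must show three things. First, the class $j^!_{b,\pitchfork}[M,u_{ABS}(\nu_M),x,f]$ does not depend on the transverse representative $\widetilde f$. Second, it is invariant under the bordism relation. Third, it is invariant under vector bundle modification by a $\K$-oriented ($\Spin^c$) bundle. Additivity in $x$ and under disjoint unions is immediate from the definition.

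\textbf{Bordism and choice of transverse map.} The independence of the choice of $\widetilde f$ is a special case of bordism invariance. Two transverse maps homotopic to $f$ rel a neighbourhood of $\partial M$ (mapping into $S\nu$) give a homotopy $M\times[0,1]\to D\nu$. By Thom's transversality theorem (Browder, Thm.~II.2.1), applied rel the ends, this homotopy can be made transverse to $B$. For a general bordism $(W,u_{ABS}(\nu_W),y,F)$ between $(M,\dots)$ and $(M',\dots)$ with $F(\omega)\subseteq S\nu$, we make $F$ transverse to $B$ rel neighbourhoods of $M\sqcup M'$, where it is already transverse. Then $F^{-1}(B)$ is a compact manifold whose boundary is $N\sqcup N'$, since $F(\omega)\cap B=\varnothing$. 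Its normal bundle in $W$ is $(F|)^*\nu$, which is $\Spin^c$.

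The key point is the 2-out-of-3 property for $\Spin^c$ structures applied to $TW|_{F^{-1}(B)} \cong T F^{-1}(B)\oplus (F|)^*\nu$, together with its stable normal version. This equips $F^{-1}(B)$ with a $\Spin^c$ structure, hence with an ABS class on its stable normal bundle, and this structure restricts to the ones induced on $N$ and $N'$ (with the sign convention $u_{M_i}=(-1)^i u_W|_{M_i}$). We then need that $u_{ABS}$ is natural for these induced structures, i.e.\ that it is multiplicative under Whitney sum. Finally $y|_{F^{-1}(B)}$ restricts to $\iota^*x$ and $\iota'^*x'$, so $(F^{-1}(B),u_{ABS},y|,F|)$ is the required bordism.

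\textbf{Vector bundle modification.} We argue exactly as in Proposition \ref{prop-fork-welldefined}. Let $V\to M$ be $\Spin^c$ and $S=S(V\oplus1)\xrightarrow{\pi}M$. Then $\widetilde f\circ\pi$ is transverse to $B$, with preimage $N_S=S(V|_N\oplus1)$, which is the vector bundle modification of $N$ by the $\Spin^c$ bundle $V|_N$. It remains to identify the classes, i.e.\ to prove $\iota_S^*\sigma_!(x)=(\sigma_N)_!\iota^*(x)$ in $\K^*(N_S)$. This is the base change formula Lemma \ref{lem.basechangecohomgysin} applied to the cartesian square formed by $\iota:N\to M$ and $V|_N\to V$, with $h=\K$ and the ABS Thom class of $V|_N$ pulled back from that of $V$. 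We also check that the $\Spin^c$ structure on $N_S$ obtained by transversality agrees with the one used in the definition of bundle modification; both are built from the same splittings, so this is again 2-out-of-3.

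I expect the main obstacle to be this bookkeeping of $\Spin^c$ structures rather than any analysis. Unlike the oriented case, $\Spin^c$ structures are extra data, and $u_{ABS}(\nu_N)$ must be specified, not merely shown to exist. I would handle this by fixing once and for all the convention that $N$ inherits the $\Spin^c$ structure from the stable splitting $\nu_N\cong\nu_M|_N\oplus g^*\nu$. With that convention, each compatibility reduces to the multiplicativity of the Atiyah--Bott--Shapiro Thom class.
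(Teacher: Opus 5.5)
Your proposal is correct and follows the route the paper intends: the paper omits this proof as a repetition of Proposition \ref{prop-fork-welldefined}. That proof shows bordism invariance via the transverse preimage $F^{-1}(B)$, and invariance under bundle modification via $N_S=S(V|_N\oplus 1)$ together with $\iota_S^*\sigma_!(x)=(\sigma_N)_!\iota^*(x)$. Your extra bookkeeping fills in details the paper leaves implicit: independence of the transverse representative, the $\Spin^c$ structure on preimages via 2-out-of-3 and multiplicativity of Thom classes, and obtaining the base change from Lemma \ref{lem.basechangecohomgysin}.
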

 
\begin{lemma}\label{lemma:gysin-geo-trans}
Let $B$ a finite CW-complex and let $\nu\to B$ a $\Spin^c$ vector bundle over $B$ of rank $r$.
Let $j:B\hookrightarrow \nu$ the inclusion of $B$ as the zero-section of $\nu$.
Consider the Gysin homomorphism
 $ j^!_{b,\Spin^c}: \K^b_i (D\nu, S\nu)\to  \K^b_{i-r} (B)$
 defined  by taking the $\cap'$-product with the Thom class $u_K (\nu)\in \K^r (D\nu,S\nu)$. Then 
\begin{equation}\label{gysin-geo-trans}
j^!_{b,\Spin^c} = j^!_{b,\pitchfork}
\end{equation}
\end{lemma}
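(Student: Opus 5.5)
The plan is to mirror the proof of Proposition \ref{lemma:gysin-geo-trans-bis}, replacing the Sullivan class $\Delta_\cplx$ by the Atiyah--Bott--Shapiro Thom class $u_K(\nu)$ and the groups $\K^{b/2}_*$ by $\K^b_*$.

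The first step is to reduce to the universal case by base change. Take a class $[M,u_{ABS}(\nu_M),x,f]$ with $f$ transverse to $B$, and set $N=f^{-1}(B)$ with $\nu'=(f|_N)^*\nu$. Write $\iota=i\circ j_N$, where $i:D\nu'\to M$ is the tubular inclusion and $j_N$ is the zero section. Note that $\nu'$ inherits a $\Spin^c$ structure, and $N$ is $\Spin^c$ by the 2-out-of-3 lemma.

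I would then prove the $\Spin^c$ analogues of Lemmas \ref{lem.basechangepitchfork-oriented} and \ref{lem.basechangeThom-oriented}.
\begin{itemize}
\item For $j^!_{b,\pitchfork}$ the argument is verbatim. A fiberwise isomorphic bundle map $G$ preserves transversality and the preimage $N$.
\item For $j^!_{b,\Spin^c}$ I would use naturality of the ABS Thom class, $G^*u_K(\nu)=u_K(\nu')$, valid because $G$ respects the $\Spin^c$ structures. Combined with the projection formula $G_*(G^*u\cap' a)=u\cap' G_*a$, which is immediate from the definition of $\cap'$, this gives the square.
\end{itemize}
Together with naturality of the excision and restriction defining $i^!$, this yields
\[ j^!_{b,\Spin^c}\circ f_*=(f|_N)_*\circ\iota^!_{b}, \qquad j^!_{b,\pitchfork}\circ f_*=(f|_N)_*\circ\iota^!_{\pitchfork}. \]
Since $[M,u_{ABS},x,f]=f_*[M,u_{ABS},x,\id_M]$, it then suffices to treat $\id_M$. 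After applying $i^!$, which sends this class to $[D\nu',u_{ABS},i^*x,\id]$, it suffices to treat the zero section $j_N:N\to D\nu'$ of a $\Spin^c$ bundle over a closed $\Spin^c$ manifold.

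In this closed case I would use the description \eqref{explicit-board} of the Gysin map, now with the $\Spin^c$ Poincar\'e duality isomorphisms \eqref{poincare-k-spin_c} given by $\cap'$ with $[(D\nu',S\nu'),1,\id]$ and $[N,1,\id]$. The computation then reads
\[ (j_N)^!_b[D\nu',i^*x,\id]=(j_N)^!_b\bigl(i^*x\cap'[(D\nu',S\nu'),1,\id]\bigr)=[N,j_N^*i^*x,\id]=(j_N)^!_{b,\pitchfork}[D\nu',i^*x,\id]. \]
The last equality holds because $\id$ is already transverse to $N$.

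The main obstacle is justifying that capping with $u_K(\nu')$ agrees with the Poincar\'e-dual composite $j_N^*$. Concretely, one needs
\[ u_K(\nu')\cap'[(D\nu',S\nu'),1,\id]=j_{N*}[N,1,\id]. \]
This requires that the ABS orientation of $D\nu'$ restricts to the product of the ABS orientations of $N$ and $\nu'$, that is, multiplicativity of $u_{ABS}$ under Whitney sum, $\nu_{D\nu'}|_N\oplus\nu'\cong\nu_N$. To establish it, I would invoke Boardman's Theorem 6.24 together with the $\Spin^c$ 2-out-of-3 compatibility of the ABS classes, both applied on the topological side. I would then transport the identity to $\K^b_*$ via $\varphi$, using Lemma \ref{lem.pddeltafundjakobcommwphihphi} and \eqref{jakob-compatibility-bis}. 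Signs from orientation conventions would need checking but should be absorbed by the induced-orientation conventions in the bordism relation.
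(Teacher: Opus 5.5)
Your proposal is correct and follows the same route as the paper. The paper omits this $\Spin^c$ proof and defers to the proof of Proposition~\ref{lemma:gysin-geo-trans-bis}, which proceeds exactly as you do: base change for both Gysin maps to reduce to $[M,x,\id_M]$, then excision to $D\nu'$, then the closed-manifold computation via \eqref{explicit-board}. Your reduction of \eqref{explicit-board} to the identity $u_K(\nu')\cap'[(D\nu',S\nu'),1,\id]=j_{N*}[N,1,\id]$, via multiplicativity of the ABS Thom classes under $\nu_N\cong\nu_{D\nu'}|_N\oplus\nu'$, makes explicit what the paper simply takes from Boardman's Theorem 6.24.
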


\subsection{Compatibility between topological and analytic Gysin maps for K-oriented inclusions}

Let 
\[ [(M,\partial M), u_{ABS}(\nu_M),x, f:(M,\partial M)\to (D\nu, S\nu)] 
  \in \K^b_i (D\nu, S\nu) \]
be an arbitrary element. We shall also employ the shorter notation
$[M,x,f]$, as in Jakob \cite{Jak:BDH}. Recall that by the very definition of $\kappa^b$ we have 
\[
\kappa^b [M,x,f]=
 f_* [D^{\Spin^c}_{M, x}]
\]
if $x\in \K^0 (M)$ and otherwise 
$$\kappa^b ([M, x, f:M\to X]):= 
 (f\circ \pi_1)_* [D^{\Spin^c}_{S(V\oplus 1),\sigma_! (x)}]$$
 if $x\in \K^1 (M)$; here $V\to M$ equals the trivial real line bundle, $\sigma$ is the natural map
 $M\to S(V\oplus 1)=M\times S^1$ and $\pi_1$ is the projection onto the first factor. See the definition given before Proposition \ref{prop:kappawell}. We then have:

\begin{prop} \label{prop.gysincompgeoanavbsoverspinc}
Let $B$ be a finite CW-complex and let $\nu$ be a $\K$-oriented vector
bundle of rank $r$ over $B$. Let $V$ be the total space of $\nu$ and
$j: B \hookrightarrow V$ the zero section inclusion. Then the diagram

\[ \xymatrix{
\K^b_i (D\nu,S\nu) \ar[d]_{j^!_{b,\Spin^c}} \ar[r]^{\kappa}
 & \K^\an_i (D\nu,S\nu) 
    \ar[d]^{j^{!}_{\an,\Spin^c}} \\
\K^b_{i-r} (B) \ar[r]_{\kappa} 
& \K^\an_{i-r} (B)
} \]
commutes.
\end{prop}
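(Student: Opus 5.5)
The plan is to copy the proof of the right-most square of \eqref{commutativity-immersions-main}, with the signature operator replaced by the $\Spin^c$ Dirac operator. By Lemma \ref{lemma:gysin-geo-trans} we may replace $j^!_{b,\Spin^c}$ by the transverse Gysin map $j^!_{b,\pitchfork}$. It then suffices to prove
\[ j^!_{\an,\Spin^c}\,\kappa^b [M,x,f] = \kappa^b\, j^!_{b,\pitchfork}[M,x,f] \]
for an arbitrary cycle $[M,x,f]\in \K^b_i(D\nu,S\nu)$. By Thom's transversality theorem we may assume $f$ is transverse to the zero section $B$. Set $N=f^{-1}(B)$, $g=f|_N$, and let $\iota:N\hookrightarrow M$ be the inclusion. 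As already observed, $N$ is $\Spin^c$ by the 2-out-of-3 lemma, and its normal bundle is $\nu'=g^*\nu$ with the pulled-back $\Spin^c$ structure.

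\textbf{Case $x\in\K^0(M)$.} The first step is base change in analytic K-homology:
\[ j^!_{\an,\Spin^c} f_*[D^{\Spin^c}_{M,x}] = g_*\,\iota^!_{\an,\Spin^c}[D^{\Spin^c}_{M,x}]. \]
I would prove this in the Connes--Skandalis correspondence picture, as in Proposition \ref{prop.kaspmultcapnaturality} and Proposition \ref{prop-boardman}. The element $f_*[D_{M,x}]$ is the correspondence $(D\nu\xleftarrow{f}(M,x)\to\pt)$. The Gysin class $\Sigma(j)$ is the correspondence $(B\xleftarrow{\id}B\xrightarrow{j}D\nu)$, K-oriented by the $\Spin^c$ structure on $\nu$. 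Their Kasparov product is computed by the transverse fibre product $B\times_{D\nu}M=N$, which gives exactly $(B\xleftarrow{g}(N,\iota^*x)\to\pt)$. One has to check that the K-orientation of $N\to\pt$ obtained from the fibre product agrees with the given $\Spin^c$ structure on $N$; this holds because $\nu_N\cong g^*\nu$ as $\Spin^c$ bundles. The second step is Lemma \ref{lem.gysinandcap} together with $\iota^!_{\an}[D_M]=[D_N]$, which give
\[ \iota^!_{\an}([[x]]\otimes[D_M]) = [[\iota^*x]]\otimes[D_N] = [D_{N,\iota^*x}]. \]
The third step is to apply $g_*$ and read off $\kappa^b[N,\iota^*x,g]=\kappa^b j^!_{b,\pitchfork}[M,x,f]$.

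\textbf{Case $x\in\K^1(M)$.} Here $\kappa^b$ is defined by suspension. The transverse preimage of $B$ under $f\circ\pi_1$ is $\widetilde N=N\times S^1$. Running the degree-$0$ computation on the cycle $[M\times S^1,\sigma_!(x),f\circ\pi_1]$ gives $\kappa^b[\widetilde N,\widetilde\iota^*\sigma_!(x),\widetilde g]$. To finish, I need
\[ \widetilde\iota^*\sigma_!(x) = (\sigma_N)_!\,\iota^*(x). \]
This follows from Lemma \ref{lem.basechangecohomgysin}, exactly as in the proof of Proposition \ref{prop-fork-welldefined}. It identifies the result with the suspension of $j^!_{b,\pitchfork}[M,x,f]$, hence with $\kappa^b j^!_{b,\pitchfork}[M,x,f]$. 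The boundary case, where $f(\partial M)\subset S\nu$, needs nothing extra: $N$ lies in the interior of $M$, and $j^!_\an$ only involves the relative class near $B$.

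I expect the main obstacle to be the base-change step. This requires identifying $j^!_{\an,\Spin^c}$, defined through the tubular-neighbourhood recipe \eqref{useful} with $\Sigma(\pi)^{-1}$, with Kasparov product by the correspondence $\Sigma(j)$. It also requires checking that the Connes--Skandalis composition rule applies to the relative group $\K^\an(D\nu,S\nu)=\KK(C_0(V),\cplx)$. If this proves delicate, I would fall back on the route of Lemma \ref{lem.gysinandcap}: restrict to a tubular neighbourhood $D\nu'$ of $N$ in $M$, where $f$ is a fibrewise isomorphism onto $\nu$. Then write $\Sigma(\pi')=G^*\Sigma(\pi)$ for the bundle map $G:\nu'\to\nu$, and conclude using naturality of the family $\Spin^c$ Dirac classes under pullback.
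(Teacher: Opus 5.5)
Your proposal is correct and is essentially the paper's argument. The paper proves this by rerunning the last part of the proof of Theorem~\ref{thm.compatibility-immersions} with the $\Spin^c$ Dirac operator in place of the signature operator, with Lemma~\ref{lemma:gysin-geo-trans} as the crucial input, followed by your steps: base change, Lemma~\ref{lem.gysinandcap} with $\iota^!_{\an}[D_M]=[D_N]$, and the suspension trick for $x\in\K^1(M)$.

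One caveat concerns your primary route for base change. Since $B$ is only a CW complex, $B\leftarrow B\to D\nu$ is not a Connes--Skandalis correspondence, so that route does not apply as stated. Your fallback is the version that works: restrict to a tubular neighbourhood of $N$ and use naturality of $\Sigma(\pi)$ under the fibrewise isomorphism $G:\nu'\to\nu$. The paper simply says ``by base change'' at this point without further comment.
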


\begin{proof}
The Proposition is proved with arguments very similar
 to the ones given in the oriented case, see the last part of the proof of Theorem \ref{thm.compatibility-immersions}.
 Crucial in the proof is the result established in Lemma
 \ref{lemma:gysin-geo-trans}, namely that 
\begin{equation*}
j^!_{b,\Spin^c} = j^!_{b,\pitchfork}
\end{equation*}
\end{proof}

In terms of the composition
\begin{equation*}
	\lambda_{c} = \kappa\circ \phi^{-1}: \K^{\topo}_*(\cdot) \lra \K^{\an}_*(\cdot),
\end{equation*}
the main result of this section is the following.

\begin{thm}\label{thm:big-commutative}
Let $B,Y$ be compact topological spaces having the homotopy type of a finite CW-complex.
Let
$g: B \hookrightarrow Y$ be a $\K$-oriented normally nonsingular inclusion.
Then the diagram
\[ \xymatrix{
	\K^\topo_* (Y) \ar[d]_{g^!_\topo} \ar[r]^-{\lambda_c} &
	\K^\an_* (Y) \ar[d]^{g^{!}_{\an,\Spin^c}} \\
	\K^\topo_* (B) \ar[r]^-{\lambda_c} &
	\K^\an_* (B)
} \]
commutes.
\end{thm}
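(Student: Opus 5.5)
The plan mirrors the proof of Theorem \ref{thm.compatibility-immersions}, with Sullivan orientations replaced by Atiyah--Bott--Shapiro orientations and $\K^{b/2}_*$ replaced by $\K^b_*$. We use the notation of Definition \ref{def:nnsi}: $i\colon U\subset Y$ is the tubular neighbourhood, $\phi$ the homeomorphism onto the total space $X$ of $\nu$, and $j\colon B\hookrightarrow D\nu$ the zero section. Both $g^!_\topo$ and $g^!_{\an,\Spin^c}$ factor as three steps:
\begin{enumerate}
\item restriction to $U$, i.e. the map to $(U,U-B)$ in topological K-homology and $(i^\sharp)^*$ in the analytic theory;
\item the homeomorphism $\phi$, identifying $(U,U-B)$ with $(D\nu,S\nu)$, equivalently $(\Th\nu,\infty)$, by excision;
\item the Thom-class step: $u_K(\nu)\cap -$ topologically, and $\Sigma(p)^{-1}\otimes -$ analytically, as in \eqref{useful}.
\end{enumerate}
The theorem therefore reduces to commutativity of three squares, each with $\lambda_c=\kappa^b\circ\varphi^{-1}$ on the horizontal arrows.

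For steps (1) and (2), I would argue that $\varphi$ and $\kappa^b$ are natural transformations of homology theories. For $\varphi$ this is Jakob's theorem. For $\kappa^b$ it is the analogue, for $\kappa^b$, of the theorem that $\kappa^{b/2}$ is an isomorphism of homology theories, which covers naturality and compatibility with boundary maps. Hence $\lambda_c$ commutes with maps of pairs, with the restriction $Y\to(Y,Y-B)$, with excision, and with homeomorphisms. Over $X$ the analytic group is $\KK(C_0X,\cplx)$, and this is identified with $\K^\an_*(D\nu,S\nu)$ in the usual way. I would also check that the analytic restriction to the open set $U$ agrees with the relative-group description under this identification. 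This is formal: for a compact pair the relative analytic group is $\KK(C_0(Y\setminus A),\cplx)$, and excision is induced by the inclusions of ideals $C_0(U\setminus B)\subset C_0(U)\subset C(Y)$.

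Step (3) is the heart of the proof. The square is
\[
\xymatrix{
\K^\topo_n(D\nu,S\nu) \ar[d]_{u_K(\nu)\cap-} & \K^b_n(D\nu,S\nu) \ar[l]_-{\varphi} \ar[d]^{j^!_{b,\Spin^c}} \ar[r]^-{\kappa^b} & \K^\an_n(D\nu,S\nu) \ar[d]^{j^!_{\an,\Spin^c}} \\
\K^\topo_{n-r}(B) & \K^b_{n-r}(B) \ar[l]^-{\varphi} \ar[r]_-{\kappa^b} & \K^\an_{n-r}(B)
}
\]
Its left half commutes by Proposition \ref{prop.gysincomptopgeovbsoverspinc}, and its right half commutes by Proposition \ref{prop.gysincompgeoanavbsoverspinc}. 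Since $\varphi$ is an isomorphism, composing gives $j^!_{\an,\Spin^c}\circ\lambda_c=\lambda_c\circ j^!_\topo$ on $\K^\topo_n(D\nu,S\nu)$. Together with steps (1) and (2), this assembles into the stated diagram.

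The main obstacle is the right-hand square, i.e. Proposition \ref{prop.gysincompgeoanavbsoverspinc}, whose proof the paper only sketched. To settle it, I would rerun the argument from the end of Section \ref{sec:CompHoriented}.
\begin{itemize}
\item Replace $j^!_{b,\Spin^c}$ by $j^!_{b,\pitchfork}$ using Lemma \ref{lemma:gysin-geo-trans}.
\item Make the cycle map $f$ transverse to $B$, with preimage $N$, inclusion $\iota\colon N\hookrightarrow M$ and restriction $g=f|_N$.
\item Use base change in the form $j^!_\an f_*=g_*\iota^!_\an$, which holds because $\nu_N\cong g^*\nu$ compatibly with the $\Spin^c$ structures.
\item Use Lemma \ref{lem.gysinandcap} to write $\iota^!_\an([[x]]\otimes[D_M])=[[\iota^*x]]\otimes[D_N]$.
\end{itemize}
The delicate points are the following. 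First, the analytic base change must be justified, which should follow from the Connes--Skandalis correspondence calculus, since the Kasparov product of transverse correspondences is the fibre product. Second, the $\Spin^c$ structure on $N$ produced by the two-out-of-three lemma must match the one for which $\iota^![D_M]=[D_N]$. Third, the case $x\in\K^1(M)$ is handled via the $M\times S^1$ suspension, as before.
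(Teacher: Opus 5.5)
Your proposal is correct and takes the same route as the paper. Naturality of $\varphi$ and $\kappa^b$ handles the restriction and homeomorphism steps, and the Thom-class square then splits into the $\varphi$-square (Proposition \ref{prop.gysincomptopgeovbsoverspinc}) and the $\kappa^b$-square (Proposition \ref{prop.gysincompgeoanavbsoverspinc}). Your sketch of the latter via $j^!_{b,\pitchfork}$, transversality, base change and Lemma \ref{lem.gysinandcap} is exactly the oriented-case argument the paper says to rerun.
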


\begin{proof}
The proof is parallel to the proof of Theorem \ref{thm.compatibility-immersions} so that, thanks to the naturality of $\phi$ and $\kappa,$ we only need to check the commutativity of 
\[ \xymatrix{
\K^b_n (D\nu,S\nu) \ar[d]_{j^!_b} \ar[r]^\varphi
 & \K^\topo_n (D\nu,S\nu) \ar[d]^{j^!_\topo} \\
\K^b_{n-r} (B) \ar[r]^\varphi
 & \K^\topo_{n-r} (B)
} \]
which follows from Proposition \ref{prop.gysincomptopgeovbsoverspinc}, and of 
\begin{equation*}
 \xymatrix{
\K^b_n (D\nu,S\nu) \ar[d]_{j^!_\geo} \ar[r]^\kappa
 & \KK_n (C_0 (X),\cplx) \ar[d]^{\Sigma(p)^{-1} \otimes -} \\
\K^b_{n-r} (B) \ar[r]^\kappa
 & \KK_{n-r} (C(B),\cplx)
} \end{equation*}
which follows from Proposition \ref{prop.gysincompgeoanavbsoverspinc}. Together with the naturality of $\phi$ and $\kappa,$ this finishes the proof of the theorem.

\end{proof}

\section{The submersive case}\label{sec:submersive}

In this section we compare the analytic and topological Gysin maps associated to submersions. In Part 1 we have defined the analytic Gysin map for fiber bundles of Witt spaces but it is clear that the construction only requires that the fibers be Witt spaces and works equally well for fiber bundles of finite CW-complexes as long as the fibers are Witt spaces. (The reason that we assumed in Part I that the base was also a Witt space was so that we could show that the Gysin map sent an orientation of the base to an orientation of the total space.) On the topological side, we will describe Gysin maps for fiber bundles associated to principal bundles with compact Lie structure group. The main result of this section is then the compatibility of the two definitions in this case (so one could think of the Gysin map defined in Part 1 as extending the topologically defined map beyond compact Lie structure groups).

\subsection{The topological Gysin maps in the submersive case}
Let $\xi$ be a normally nonsingular $F$-fiber bundle\footnote{Recall \cite[\S5.4.2]{GorMac:IHTII} that an oriented topological fiber bundle $p:X \lra B$ is normally nonsingular (with codimension $(-c)$) if the fiber $p^{-1}(b)$ is a topological manifold of dimensions $c.$} over a finite CW complex $B$ of dimension $b$
with projection $p:X\to B$.  We assume that $F$ is is a smooth compact d-dimensional manifold and that $\xi$ is associated  to a G-principal bundle with $G$ a compact Lie group.
Thus there exists a principal $G$-bundle $\widetilde{X}\to B$ and an action of $G$ on $F$ by 
diffeomorphism; then $X:= \widetilde{X}\times_G F$. 

\medskip
\noindent
We have defined in Part 1
an analytic bundle transfer
\[p^!_\an: \K^\an_* (B) [\smlhf] \longrightarrow  \K^\an_{*+d} (X)[\smlhf]  \]
We shall recall below, following Banagl (cf. \cite{Ban:BTLOCSS, Ban:TSSKSS}), how to construct  a topological bundle transfer
\[ p^!_\topo: \KO^\topo_* (B) [\smlhf] \longrightarrow \KO^\topo_{*+d} (X)[\smlhf]\,.  \]

\medskip
\noindent
First, we construct a fiberwise embedding
\[ \theta: X \hookrightarrow B \times \real^s,~ s \text{ large},  \]
over $B$ which is normally nonsingular and possesses a normal bundle
admitting the interpretation of a vertical normal bundle for $\xi$.
We use techniques of Becker and Gottlieb \cite{beckergottlieb}.
By the Mostow-Palais equivariant embedding theorem
(\cite{mostow}, \cite{palais})
there exists, for sufficiently large $n$, 
a $G$-module structure on the real vector space $\real^n$ with a
$G$-invariant metric and a smooth equivariant embedding $F \subset \real^n$.
We endow $F$ with the induced metric.
Let $\eta$ be the vector bundle over $B$ with projection
\[ \pi: E(\eta)= \widetilde{X} \times_G \real^n \longrightarrow B. \]
The equivariant embedding $F\subset \real^n$ induces a fiberwise embedding
\[ X = \widetilde{X} \times_G F \subset \widetilde{X} \times_G \real^n = E(\eta) \]
over $B$.
Since $B$ is a finite CW complex, we can choose a complementary vector
bundle $\zeta$ over $B$ with
\[ \eta \oplus \zeta = B \times \real^s,  \]
the trivial rank $s$ vector bundle over $B$.
We obtain an embedding $\theta: X \hookrightarrow B \times \real^s$
as the composition
\[ X \subset E(\eta) \subset E(\eta \oplus \zeta) = B \times \real^s. \]
By construction, the diagram
\[ \xymatrix{
X \ar[dr]_p \ar@{^{(}->}[rr]^-\theta & & B \times \real^s \ar[dl]^{\pro_1} \\
& B &
} \]
commutes.

\begin{lemma}
The fiberwise embedding $\theta$ is a normally nonsingular inclusion of codimension $r:=s-d$.
\end{lemma}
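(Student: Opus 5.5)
The plan is to build the normal bundle of $\theta$ explicitly as a bundle over $X$, assembled from two pieces: the $G$-equivariant normal bundle of $F$ in $\real^n$, and the complementary bundle $\zeta$ pulled back to $X$. Then I would produce the tubular neighbourhood homeomorphism required by Definition \ref{def:nnsi} fibrewise and $G$-equivariantly.

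First, consider the normal bundle $\nu_F$ of the equivariant embedding $F\subset\real^n$, taken as the orthogonal complement of $TF$ with respect to the $G$-invariant metric. It is a $G$-equivariant vector bundle of rank $n-d$ over $F$. Compactness of $F$ and $G$ gives an equivariant tubular neighbourhood: choose $\epsilon>0$ so small that
\[ e:\nu_F^{<\epsilon}\to\real^n,\qquad (y,v)\mapsto y+v, \]
is a diffeomorphism onto an open neighbourhood $U_F$ of $F$. This map is $G$-equivariant, because $G$ acts by isometries and preserves $F$, so it preserves normal vectors and their lengths. Composing with a radial rescaling $[0,\infty)\cong[0,\epsilon)$, which is also $G$-equivariant, gives an equivariant homeomorphism from the total space of $\nu_F$ onto $U_F$ that restricts to the identity on the zero section $F$.

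Second, apply the functor $\widetilde X\times_G(-)$. This yields a vector bundle
\[ \nu_v:=\widetilde X\times_G\nu_F\to\widetilde X\times_G F=X, \]
the vertical normal bundle of rank $n-d$. It also gives a homeomorphism from the total space of $\nu_v$ onto the open set $\widetilde X\times_G U_F\subset E(\eta)$, which restricts to the inclusion on the zero section $X$. This is the step I expect to need the most care. One must check that the quotient construction preserves openness and the homeomorphism property. I would argue this via local triviality of $\widetilde X\to B$: over a trivialising open set $W\subset B$ both sides become $W\times(-)$ and the map becomes $\id_W\times e$. Local triviality also shows that $\nu_v$ is a genuine vector bundle.

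Third, I handle $\zeta$. Inside $E(\eta\oplus\zeta)=E(\eta)\times_B E(\zeta)$, the set $(\widetilde X\times_G U_F)\times_B E(\zeta)$ is an open neighbourhood of $X$. Set $\nu:=\nu_v\oplus p^*\zeta$, a bundle of rank $(n-d)+(s-n)=s-d=r$ over $X$. The map
\[ (x,v,w)\mapsto (\text{image of }(x,v),\,w) \]
is then a homeomorphism from the total space of $\nu$ onto this neighbourhood, and it restricts to $\theta$ on the zero section. Finally, compose with the identification $E(\eta\oplus\zeta)=B\times\real^s$. This verifies Definition \ref{def:nnsi} with normal bundle $\nu$ and codimension $r=s-d$. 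If an orientation is wanted, it comes from orientations of $\xi$ and of $\zeta$, but that is not needed for the statement.
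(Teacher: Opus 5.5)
Your proposal is correct and is essentially the paper's proof. The paper likewise takes $\mu=\widetilde{X}\times_G \nu_F$ (your $\nu_v$) with tubular neighbourhood $N=\widetilde{X}\times_G U$ of $X$ in $E(\eta)$, sets $\nu_p=\mu\oplus p^*\zeta$, and identifies $E(\nu_p)$ with the open set $T=E((\pi^*\zeta)|_N)$ (which is exactly your $(\widetilde{X}\times_G U_F)\times_B E(\zeta)$) via the same fibrewise map, while you are somewhat more explicit about constructing the equivariant tubular neighbourhood of $F$ and about why $\widetilde{X}\times_G(-)$ preserves openness and homeomorphisms.
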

\begin{proof}
Let $TF$ denote the tangent bundle of $F$. 
We will write $TF$ also for its total space.
The group $G$ acts on $TF$ by the derivatives of elements in $G$.
Then the projection $TF \to F$ is $G$-equivariant.
The vertical tangent bundle $TX/B$ of $p: X\to B$  is the vector bundle
over $X$ given by
\[ TX/B = \widetilde{X} \times_G TF \longrightarrow
         \widetilde{X} \times_G F = X. \]
Let $\nu_F$ denote the normal bundle of $F \subset \real^n$.
Let $E(\nu_F) \hookrightarrow \real^n$ be an equivariant embedding
of the total space as a tubular neighborhood of $F$ in $\real^n$.
Let $U \subset \real^n$ be the open image of this embedding.
Let 
\[ TF \oplus \nu_F \cong F \times \real^n  \]
be the trivialization associated with the embedding.
The normal bundle $\nu_F$ of $F$ gives rise to a vector bundle
$\mu$ over $X$ with projection
\[ \pi_\mu: E(\mu) = \widetilde{X} \times_G E(\nu_F) 
    \longrightarrow \widetilde{X} \times_G F  \]
induced by the projection $E(\nu_F) \to F$ of $\nu_F$.
This bundle admits the interpretation as the normal bundle
of $X$ in $E(\eta)$.
The trivialization $TF \oplus \nu_F \cong F \times \real^n$
induces an equivalence
\[ TX/B \oplus \mu \cong p^* (\eta).  \]
We then obtain the trivialization
\[ TX/B X \oplus \mu \oplus p^* (\zeta) \cong 
    p^* (\eta) \oplus p^* (\zeta) = X \times \real^s. \]
The vector bundle $\nu_p$ over $X$ given by
\[ \nu_p := \mu \oplus p^* (\zeta) \]
and thus satisfying
\[ TX/B \oplus \nu_p = X \times \real^s, \]
stably has the interpretation of the \emph{stable vertical normal bundle} of
$p:X\to B$.
At the same time, $\nu_p$ is stably also the (ordinary) normal bundle
of the embedding $\theta: X \hookrightarrow B\times \real^s$, as we
will now show.
Recall that $U\subset \real^n$ is an open tubular $G$-neighborhood of $F$
with an equivariant diffeomorphism $\psi_F: E(\nu_F) \to U$
which restricts to the identity on $F$.
The equivariant open inclusion $U\subset \real^n$ induces
an open inclusion
\[ N := \widetilde{X} \times_G U \subset 
   \widetilde{X} \times_G \real^n = E(\eta).  \]
The projection $N\to B$ induced by $\widetilde{p}: \widetilde{X} \to B$
makes $N$ a $U$-fiber bundle over $B$.
The equivariant inclusion $F \subset U$ induces an inclusion
\[ X = \widetilde{X} \times_G F 
    \subset \widetilde{X} \times_G U = N. \]
So $N$ is an open neighborhood of $X$ in $E(\eta)$.
We claim that $N$ is in fact a tubular neighborhood of $X$ in $E(\eta)$.
Indeed, $\psi_F$ induces a homeomorphism $\psi$ such that
\[ \xymatrix{
E(\mu) = \widetilde{X} \times_G E(\nu_F) 
 \ar[rr]^\psi_\cong & & \widetilde{X} \times_G U = N \\
& X = \widetilde{X} \times_G F \ar@{^{(}->}[ul]^0 \ar@{^{(}->}[ur] & 
} \]
commutes. 
We note that $\psi$ is a map over $B$, i.e. the diagram
\[ \xymatrix{
E(\mu) \ar[d]_\psi^\cong \ar[r]^{\pi_\mu} & X \ar[d]^p \\
N \ar[r] & B
} \]
commutes, since the commutative diagram to a point,
\[ \xymatrix{
E(\nu_F) \ar[d]_{\psi_F}^\cong \ar[r] & F \ar[d] \\
U \ar[r] & \pt,
} \]
induces upon application of $1_{\widetilde{X}} \times_G -$ a commutative
diagram
\[ \xymatrix{
E(\mu)=\widetilde{X} \times_G E(\nu_F) \ar[d]_{\psi}^\cong \ar[r]^-{\pi_\mu} 
  & \widetilde{X} \times_G F =X \ar[d]^p \\
N=\widetilde{X} \times_G U \ar[r] & \widetilde{X} \times_G \pt =B.
} \]
The total space
\[  T := E((\pi^* \zeta)|_N),  \]
with $\pi$ the projection $E(\eta)\to B$ of $\eta,$
is open in $E(\eta \oplus \zeta) = B\times \real^s$. It contains $X$ and thus is
a neighborhood of it in $B\times \real^s$.
We shall verify that $T$ can be given the structure of a vector bundle over $X$,
thus proving that $T$ is tubular. That vector bundle will be seen to be $\nu_p$.
If $\alpha$ and $\beta$ are any two vector bundles over a space $X$, 
then $E(\alpha \oplus \beta) = E(q^* \beta),$ where $q: E(\alpha)\to X$
is the projection of $\alpha$.
Hence,
\[ E(\nu_p) = E(\mu \oplus p^* \zeta) = E(\pi_\mu^* p^* \zeta). \]
Using the commutative diagram
\[ \xymatrix{
E(\mu) \ar[d]_\psi^\cong \ar[r]^{\pi_\mu} & X \ar[d]^p \\
N \ar@{^{(}->}[d] \ar[r] & B \\
E(\eta), \ar[ru]_\pi &
} \]
we may write 
\[ E(\pi_\mu^* p^* \zeta) = E(\psi^* (\pi^* \zeta)|_N),  \]
so that
\[ E(\nu_p) = E(\psi^* (\pi^* \zeta)|_N). \]
Then the top horizontal arrow of the cartesian diagram
\[ \xymatrix{
E(\nu_p) \ar[r]^-\phi_-\cong \ar[d] & E((\pi^* \zeta)|_N) =T \ar[d] \\
E(\mu) \ar[r]^\cong_\psi & N
} \]
defines a homeomorphism $\phi$ that identifies the total space of
$\nu_p$ with the tubular neighborhood $T$ of $X$ in $B\times \real^s$,
as was to be shown.
\end{proof}

\begin{remark}
It is worthwhile to discuss the special case where $B,$ and thus $X$,
are smooth manifolds. Then $\theta:X \hookrightarrow B\times \real^s$ is
a smooth embedding. If $\nu_{X\subset E(\eta)}$ denotes the normal bundle of
$X$ in $E(\eta)$, then
\begin{equation} \label{equ.txplusnuxineetaisteetaonx}
TX \oplus \nu_{X\subset E(\eta)} = T(E\eta)|_X. 
\end{equation}
Furthermore, if $\nu_{E\eta \subset E(\eta \oplus \zeta)}$ 
denotes the normal bundle of $E\eta$ in $E(\eta \oplus \zeta)$, then
\[ T(E\eta) \oplus \nu_{E\eta \subset E(\eta \oplus \zeta)} 
       = T(B\times \real^s)|_{E\eta}. \]
Restricting this to $X$, we obtain
\[ T(E\eta)|_X \oplus (\nu_{E\eta \subset E(\eta \oplus \zeta)})|_X 
       = T(B\times \real^s)|_X. \]
Substituting (\ref{equ.txplusnuxineetaisteetaonx}), we have
\[ TX \oplus \nu_{X\subset E(\eta)} \oplus 
   (\nu_{E\eta \subset E(\eta \oplus \zeta)})|_X 
       = T(B\times \real^s)|_X. \]
Since
\[ \nu_{X\subset E(\eta)} = \mu,~
     \nu_{E\eta \subset E(\eta \oplus \zeta)} = \pi^* \zeta, \]
where $\pi: E(\eta) \to B$ is the projection of $\eta$, we arrive at
\[ TX \oplus \mu \oplus 
   (\pi^* \zeta)|_X 
       = T(B\times \real^s)|_X. \]
So the normal bundle $\nu_\theta$ of $\theta$ has the description
\[ \nu_\theta = \mu \oplus (\pi^* \zeta)|_X. \]
Now, $\nu_p$ is by definition
\[ \nu_p = \mu \oplus p^* \zeta. \]
Note that since 
\[ \xymatrix{
X \ar[dr]_p \ar@{^{(}->}[rr] & & E(\eta) \ar[dl]^\pi \\
& B &
} \]
commutes, we have $p^* \zeta = (\pi^* \zeta)|_X$.
This shows that
\[ \nu_\theta = \nu_p. \]
\end{remark}

We shall now recall the definition of
\[ p^!_\topo: \KO^\topo_* (B) [\smlhf] \longrightarrow \KO^\topo_{*+d} (X)[\smlhf]  \]
following Banagl.
The homomorphism $p^!_\topo $ will be the composition of 3 homomorphisms. Consider $\nu_\theta$, 
associated to the normally non-singular inclusion of $X$ into $B\times \mathbb{R}^s$. This is an orientable real bundle 
of rank $r$; we consider  the associated Sullivan class 
$\Delta (\nu_\theta)\in \widetilde{\KO}^r({\rm Th} (\nu_\theta))[\smlhf] \equiv\KO^r (D\nu_\theta,S\nu_\theta)[\smlhf] $.
Through the Thom-Pontrjagin collapse  $T(\pro_1): {\rm Th}(B\times\mathbb{R}^s) \to {\rm Th} (\nu_\theta)$ 
we get a homomorphism
 $T(\pro_1)_*: \widetilde{\KO}^\topo_* ({\rm Th}(B\times\mathbb{R}^s))[\smlhf]  \to \widetilde{\KO}^\topo_*({\rm Th} (\nu_\theta))[\smlhf] $. Observe that
${\rm Th}(B\times\mathbb{R}^s) = S^s B^+$, the s-th suspension of $B^+$. Overall, we obtain 
$$T(\pro_1)_*: \widetilde{\KO}^\topo_* (S^s B^+)[\smlhf]  \to \KO^\topo_* (D\nu_\theta,S\nu_\theta)[\smlhf]\,.$$
We also consider the suspension isomorphism:
$${\rm Susp}: \KO^\topo_* (B)[\smlhf] \equiv \widetilde{\KO}^\topo_* (B^+)[\smlhf] \rightarrow  \widetilde{\KO}^\topo_{*+s} (S^s B^+)[\smlhf]$$
and, finally, the  homomorphism
$$\Phi: \KO_{*+s}^\topo (D\nu_\theta,S\nu_\theta)[\smlhf]\xrightarrow{\Delta (\nu_\theta)\cap -}  \KO^\topo_{*+(s-r)} (D\nu_\theta)[\smlhf]\equiv 
\KO^\topo_{*+d} (X)[\smlhf]$$
where we have used that $s-r=d$ and that $D\nu_\theta$ is homotopically equivalent to $B$.
The topological Gysin homomorphism associated to $p:X\to B$ is given, by definition, by the composition
$$\KO^\topo_* (B)[\smlhf]\xrightarrow{{\rm Susp}} \widetilde{\KO}^\topo_{*+s} (S^s B^+)[\smlhf]\xrightarrow{T(\pro_1)_*}
\KO^\topo_{*+s} (D\nu_\theta,S\nu_\theta)[\smlhf]\xrightarrow{\Phi} \KO_{*+d} (X)[\smlhf]$$
that is, 
$$p^!_\topo:= \Phi \circ T(\pro_1)_*\circ {\rm Susp}: \KO^\topo_* (B)[\smlhf]\to  \KO^\topo_{*+d} (X)[\smlhf]\,.$$

\subsection{An alternative description of the analytic Gysin map in the submersive case.}
We now pass to the analytic counterpart of this Gysin homomorphism.
We have defined  in Part 1 the analytic transfer map 
\[ \K^\an_* (B) [\smlhf] \xrightarrow{p^!_\an} \K^\an_{*+d} (X)[\smlhf], \]
 using the family of signature operators along the
fibers of $p:X\to B$.
In the particular case we are considering in this subsection, more in line with the definition  adopted by Banagl in the topological context, we can give a different but  equivalent description of this homomorphism. We explain this now.

We have factored $p$ as a composition of a normally nonsingular embedding
$\theta$, of codimension $r=s-d$, and a trivial vector bundle projection:
\[ \xymatrix{
X \ar@{^{(}->}[r]^-\theta \ar[rd]_p & B \times \real^s \ar[d]^{\pro_1} \\
& B
} \]
Denote by $U_\theta$ the tubular neighbourhood of $X$ in $B\times \real^s$ and let $\phi:U_\theta\to \nu_\theta$
the homeomorphism identifying $U_\theta$ with the normal bundle $\nu_\theta$.
We have a suspension isomorphism 
$$ 
\KK^* (C(B),\mathbb{C})[\smlhf]\longrightarrow 
\KK^{*+s} (C_0 (B\times \mathbb{R}^s),\mathbb{C})[\smlhf]$$ which is in fact given by Kasparov multiplication
on the left by the Kasparov element 
\[  \tau_B (\alpha_s) \in \KK^s (C_0 (B\times \real^s), C(B))[\smlhf]\,.  \]
(See \cite[\S 18.9]{Bla:KOA} for the notation $\tau_B$ and \cite[Remarque 2.18(2)]{Hil:FKBPVL} for the notation $\alpha_s$ used here.)
Notice that this element is nothing but the element
$\Sigma (\pro_1)$ associated to the trivial bundle $B\times\real^s \xrightarrow{\pro_1} B$ and so 
we conclude that the suspension isomorphism in analytic K-homology is simply given by $(\pro_1)^!\equiv 
\Sigma (\pro_1)\otimes -$.
Next we have
$$T(\pro_1)_*: \KK^{*+s} (C_0 (B\times \mathbb{R}^s),\mathbb{C})[\smlhf]  \to 
\KK^{*+s} (C_0 (\nu_\theta),\mathbb{C})[\smlhf] \equiv
\K^\an_{*+s} (D\nu_\theta,S\nu_\theta)[\smlhf]$$
which is in fact the composition of the  restriction homomorphism to the tubular neighbourhood $U_\theta$, induced by the
inclusion $i:U_\theta \hookrightarrow B\times\real^s$ and extension by zero,
 followed by
the isomorphism in K-theory induced by the homeomorphism  $\phi: U_\theta\to \nu_\theta$; viz.
$$T(\pro_1)_* = ([\phi]\otimes i!\otimes -)$$
with $[\phi]\in \KK^0 (C_0 (\nu_\theta), C_0 (U_\theta))$, $i ! \in \KK^0 (C_0 (U_\theta), C_0 (B\times \real^s))$
and $[\phi]\otimes i ! \in \KK^0 (C_0 (\nu_\theta), C_0 (B\times \real^s))$.
Finally, we have our usual Gysin homomorphism
$$\K^\an_{*+s} (D\nu_\theta,S\nu_\theta)[\smlhf]\to \K^\an_{*+d} (X)$$
obtained by Kasparov multiplication by $\Sigma (\pi)^{-1}$, with $\pi$ the bundle projection of $\nu_\theta$ onto $X$.
Here we have used  that $s-r=d$, so that $*+s=*+(s-r)+r=*+d+r$.

\begin{prop} \label{prop.analyticbundletransfer}
Let $p:X\to B$ be a  $F^d$-fiber bundle as above. Then the analytic bundle transfer
\[ p^!_\an: \KK^* (C(B),\cplx)[\smlhf] \longrightarrow \KK^{*+d} (C(X),\cplx)[\smlhf] \]
defined in Part 1 
is defined by the commutativity of
\begin{equation}\label{composition-fiber}
\xymatrix@C=50pt{
	\KK^* (C(B),\cplx)[\smlhf] \ar[d]_-{\tau_B (\alpha_s)\otimes -} \ar[r]^-{p^!_{\an}}&
	\KK^{*+d} (C(X),\cplx)[\smlhf] \\
	\KK^{*+s} (C_0 (B\times \real^s),\cplx)[\smlhf] \ar[r]^-{T(\pro_1)_*} &
	\KK^{*+s} (C_0(\nu_\theta),\cplx)[\smlhf] \ar[u]_-{\Sigma(\pi)^{-1}\otimes -}
}
 \end{equation}
\end{prop}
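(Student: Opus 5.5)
The plan is to show that every arrow in \eqref{composition-fiber} is left Kasparov multiplication by a bivariant class. The statement then becomes an identity in $\KK$-theory with $2$ inverted. Recall from Part 1 that $p^!_\an = \Sigma(p)\otimes -$, where $\Sigma(p)\in \KK^{-d}(C(X),C(B))[\smlhf]$ is the class of the family of (normalized) fiberwise signature operators along the fibers $F$ of $p$. The lower path of \eqref{composition-fiber} is left multiplication by
\[ \Sigma(\pi)^{-1}\otimes [\phi]\otimes i!\otimes \tau_B(\alpha_s). \]
As observed above, $\tau_B(\alpha_s)=\Sigma(\pro_1)$. Since $\Sigma(\pi)$ is invertible, it therefore suffices to prove
\begin{equation*}
\Sigma(\pi)\otimes_{C(X)} \Sigma(p) \;=\; [\phi]\otimes i!\otimes \Sigma(\pro_1)\quad\text{in}\quad \KK^{*}(C_0(\nu_\theta),C(B))[\smlhf].
\end{equation*}

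I would prove this identity in three steps.

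\textbf{Step 1 (restriction to an open subset).} The class $i!\otimes\Sigma(\pro_1)$ equals $\Sigma(\pro_1|_{U_\theta})$, the family of fiberwise signature operators of the open submersion $U_\theta\to B$. This is the families version of the fact, used in the proof of Lemma \ref{lem.gysinandcap}, that extension by zero restricts the signature class to open subsets.

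\textbf{Step 2 (diffeomorphism invariance over $B$).} We have $[\phi]\otimes\Sigma(\pro_1|_{U_\theta})=\Sigma(\pro_1\circ\phi)$. This uses the diffeomorphism invariance from Part 1, applied fiberwise. Here the compact Lie hypothesis matters: $\phi$ is built from the equivariant diffeomorphism $\psi_F$ via $\widetilde X\times_G-$, so it is smooth along the fibers and fiber-preserving. Moreover $\pro_1\circ\phi = p\circ\pi$ exactly, as the diagrams in the lemma above show. I would also check that the orientations agree, i.e.\ that $\phi$ is orientation preserving for the orientation of $\nu_\theta$ induced from $TX/B\oplus\nu_\theta\cong X\times\real^s$.

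\textbf{Step 3 (composition of submersions).} We need $\Sigma(\pi)\otimes_{C(X)}\Sigma(p)=\Sigma(p\circ\pi)$. I would obtain this in the correspondence picture of Connes--Skandalis, in Hilsum's oriented version \cite[Section 4.4]{Hil:FKBPVL}, as in Proposition \ref{prop-boardman}. The class $\Sigma(p)$ is represented by the correspondence $X\xleftarrow{\id}(X,1)\xrightarrow{p}B$, and $\Sigma(\pi)$ by $\nu_\theta\xleftarrow{\id}(\nu_\theta,1)\xrightarrow{\pi}X$. Their product is the fiber product, which is $\nu_\theta\xleftarrow{\id}(\nu_\theta,1)\xrightarrow{p\circ\pi}B$, i.e.\ $\Sigma(p\circ\pi)$. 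The transversality needed for the product is automatic here, since $\id$ is a diffeomorphism. The normalizing powers $2^{-\lfloor\cdot/2\rfloor}$ are matched by Hilsum's multiplicativity of the normalized signature classes for oriented maps, which holds after inverting $2$.

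\textbf{Main obstacle.} I expect Step 3 to be the main difficulty. The base $B$ is only a finite CW complex, not a manifold, so Hilsum's functoriality is not directly stated in this setting. The fibers of $\pi$ and of $p\circ\pi$ are also noncompact. To handle this, I would first embed $B$ (up to homotopy) as a retract of a compact manifold neighborhood $W$ in some $\real^m$, and pull the $G$-principal bundle back over $W$. The identity can then be proved over the smooth base $W$ using Hilsum's results. Naturality of all the classes under pull-back along $B\to W$ (base change, as in Lemma \ref{lem.basechangepitchfork-oriented}) transports the identity back to $B$. Noncompactness of the fibers is harmless, because all classes are taken with $C_0$ coefficients and the operators are vertically elliptic with properly supported families.
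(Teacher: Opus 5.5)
Your proposal is correct and follows the paper's own route. You use invertibility of $\Sigma(\pi)$ to reduce to $\Sigma(\pi)\otimes\Sigma(p)=[\phi]\otimes i!\otimes\Sigma(\pro_1)$, rewrite the left side as $\Sigma(p\circ\pi)$ by composition functoriality, and match it with the right side via $p\circ\pi=\pro_1\circ i\circ\phi^{-1}$ (restriction to $U_\theta$ plus diffeomorphism invariance). The differences are minor: the paper cites the composition step from Part 1 instead of Hilsum's correspondences, compresses your Steps 1--2 into one sentence ("as $q$ is a fibration"), and does not include your thickening of the CW base $B$ to a manifold $W$, which is additional care on your part.
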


\begin{proof}
Following our discussion above, we see that we  need to show that $$p^!_\an= (\Sigma(\pi)^{-1}\otimes -)  \circ ([\phi] \otimes i !\otimes - )\circ (\Sigma (\pro_1)\otimes -).$$
On the left we have $(\Sigma (p) \otimes -)$ with $\Sigma(p) \in \KK^d (C(X), C(B))$ the bivariant class
defined by the family of signature operators along the fibers of $p:X\to B$. Thus we need to show
that
\begin{equation}\label{alternative-submersive}
(\Sigma (p) \otimes -) = (\Sigma(\pi)^{-1}\otimes -)  \circ ([\phi] \otimes  i ! \otimes - )\circ (\Sigma (\pro_1)\otimes -)
\end{equation}
as morphisms from 
$\KK^* (C(B),\cplx)[\smlhf] $ to $ \KK^{*+d} (C(X),\cplx)[\smlhf]$.
Observe that $\nu_\theta$ fibers over $B$ through $p\circ \pi$.  Now,
\eqref{alternative-submersive} is true if and only if it is true that 
$$(\Sigma(\pi)\otimes -) \circ (\Sigma (p) \otimes -) 
 = ([\phi] \otimes i ! \otimes )\circ (\Sigma (\pro_1)\otimes -)$$ 
 which can be rewritten, thanks to the functoriality result with respct to 
 compositions proved in Part 1, as  
$$(\Sigma(p\circ \pi)\otimes -) = ([\phi] \otimes i ! \otimes )\circ (\Sigma (\pro_1)\otimes -)\,.$$ 
Observe that
$p\circ \pi$ is equal to $q:=\pro_1\circ i \circ \phi^{-1}$. As $q:\nu_\theta\to B$ is a fibration,  
$$\Sigma(q)\otimes -: \KK_*(C(B),\cplx)\to  \KK_{*+d}(C_0(\nu_\theta),\cplx)$$ is equal to $([\phi] \otimes i ! \otimes )\circ (\Sigma (\pro_1)\otimes -)$. Thus
$$(\Sigma (p\circ\pi)\otimes -)=(\Sigma(q)\otimes -)= ([\phi] \otimes i ! \otimes )\circ (\Sigma (\pro_1)\otimes -)$$ and the proposition is proved.
\end{proof}

\subsection{Compatibility of analytic and topological Gysin maps in the submersive case.}
We are now ready to state and prove the main result of this subsection. Recall, from \eqref{eq:def-of-lambda}, the homomorphism
$$\lambda: \KO^\topo_* ( - )[\smlhf] \longrightarrow \K^\an_* ( - )[\smlhf].$$

\begin{thm}
Let $B$ a finite CW-complex and let $p:X\to B$ be a fiber bundle with smooth compact fiber $F$ 
of dimension $d$ and associated to a principal bundle with compact 
Lie structure group $G$. Then the diagram
\begin{equation}\label{compatibility-fiber-b}
 \xymatrix{
\KO^\topo_* (B)[\smlhf] \ar[d]_{p^!_\topo} \ar[r]^{\lambda} &
  \K^\an_{*}(B)[\smlhf] \ar[d]^{p^!_\an} \\
  \KO^\topo_{*+d} (X)[\smlhf]  \ar[r]_\lambda &  \K^\an_{*+d} (X)[\smlhf]
} 
\end{equation}
commutes.
\end{thm}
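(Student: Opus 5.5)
The plan is to factor both vertical maps of \eqref{compatibility-fiber-b} into three pieces and to check that $\lambda$ commutes with each piece separately. On the topological side we have, by definition, $p^!_\topo=\Phi\circ T(\pro_1)_*\circ{\rm Susp}$. On the analytic side, Proposition \ref{prop.analyticbundletransfer} gives
\[
p^!_\an=(\Sigma(\pi)^{-1}\otimes-)\circ T(\pro_1)_*\circ(\tau_B(\alpha_s)\otimes-).
\]
The diagram \eqref{compatibility-fiber-b} therefore splits into three squares, one for suspension, one for the collapse/restriction map, and one for the Thom-type Gysin map, and we verify each in turn.

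\emph{Collapse square.} The two maps labelled $T(\pro_1)_*$ are the compositions of the restriction to the open tubular neighbourhood $U_\theta$ with the isomorphism induced by the homeomorphism $\phi$. In topological K-homology this restriction is the map of pairs $(B\times D^s,B\times S^{s-1})\to(B\times D^s,B\times D^s\setminus U_\theta)$ composed with an excision. In analytic K-homology it is extension by zero, which corresponds to the same map of pairs. The map $\lambda=\kappa\circ\varphi^{-1}\circ c\circ(\Psi^2)^{-1}$ is a natural transformation of homology theories on pairs: $\Psi^2$, $c$ and $\varphi$ are natural, and $\kappa$ is natural by the theorem on $\kappa$ together with its compatibility with boundary maps. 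Hence this square commutes, by the same functoriality argument used for the first two maps in the proof of Theorem \ref{thm.compatibility-immersions}.

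\emph{Thom square.} This is the map $\Delta(\nu_\theta)\cap-$ from $\KO_{*+s}(D\nu_\theta,S\nu_\theta)[\smlhf]$ to $\KO_{*+d}(X)[\smlhf]$, compared with $\Sigma(\pi)^{-1}\otimes-$. The zero section $j:X\hookrightarrow D\nu_\theta$ is a normally nonsingular inclusion with oriented normal bundle $\nu_\theta$, and $j^!_\an=\Sigma(\pi)^{-1}\otimes-$ by \eqref{useful}, with trivial tubular data. This is exactly the square \eqref{commutativity-immersions-main}, already established in the proof of Theorem \ref{thm.compatibility-immersions}. One point needs care: that proof works over a finite CW base, whereas here $X$ is a compact manifold bundle over $B$. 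This is fine, since $X$ has the homotopy type of a finite CW complex.

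\emph{Suspension square.} This is the step I expect to be the main obstacle, because it requires the analytic suspension element $\Sigma(\pro_1)=\tau_B(\alpha_s)$, built from the signature operator on $\real^s$ (equivalently, the Hilsum element), to match the topological suspension under $\lambda$. My first approach is to recognise ${\rm Susp}$ as the Gysin map of the trivial bundle $\pro_1:B\times\real^s\to B$. Concretely, ${\rm Susp}(a)$ is the cross product $a\times\Delta_\SO(D^s,S^{s-1})$ with the Sullivan fundamental class of the disc, which is $\pm$ the generator. Similarly, $\tau_B(\alpha_s)\otimes a$ is the exterior Kasparov product $a\times\sign_K(\real^s)$.

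It then suffices to check two things. First, $\lambda$ is multiplicative for exterior products. On $\K^{b/2}$ this is the cycle-level product $[M,x,f]\times[M',x',f']=[M\times M',x\times x',f\times f']$ together with multiplicativity of $\sign_K$ under products; the normalisation $2^{-\lfloor n/2\rfloor}$ is designed precisely for this, though odd-dimensional factors need the suspension trick of Proposition \ref{prop.kappawelldefined}. Second, $\lambda$ sends $\Delta_\SO(D^s,S^{s-1})$ to $\sign_K(D^s,S^{s-1})$. This follows from the Part 1 identification, Thm. 7.2 applied to manifolds with boundary, which identifies $\gamma\Delta_\SO$ with $\sign_K$ under $\Phi$ and is compatible with $\kappa\circ\varphi^{-1}$ by Lemma \ref{lem.orientedpddeltafundjakobcommwphi}.

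If exterior multiplicativity proves awkward, a fallback exists: reduce by naturality in $B$, since all maps are natural for maps of bases with pulled-back trivial bundles, and by Bott periodicity to the case $B=\pt$. There both sides are maps $\intg[\smlhf]\to\intg[\smlhf]$, and it suffices to compare them on one generator, namely $\sign_K(\pt)$. The $\K^0(B)$-module structure then reduces the general case to the point, using linearity: $\lambda$ is $\K^0$-linear (Lemma \ref{lem.kappa-is-linear} together with \eqref{jakob-compatibility}), and $\tau_B(\alpha_s)\otimes-$ commutes with $[[E]]\otimes-$ by Proposition \ref{prop.kaspmultcapnaturality}.

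Composing the three commuting squares yields \eqref{compatibility-fiber-b}.
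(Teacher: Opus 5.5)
Your three-square decomposition is the paper's, and you argue the collapse square (naturality of $\lambda$) and the Thom square (reduction to the right-hand part of \eqref{commutativity-immersions-main}) exactly as it does. The difference is the suspension square, which the paper settles in one line. It notes that $\tau_B(\alpha_s)\otimes-=\Sigma(\pro_1)\otimes-$ is the suspension isomorphism of analytic K-homology, and then uses that $\lambda$ is a transformation of homology theories, i.e.\ commutes with boundary maps. You already invoke that fact yourself for the collapse square.

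Your cross-product route is viable. It has the merit of matching the specific normalized element $\alpha_s=\sign_K(\real^s)$ explicitly, rather than relying on its identification with the boundary-map suspension. But it needs two facts the paper never proves:
\begin{enumerate}
\item multiplicativity of Jakob's $\varphi$ under cross products, which is fine since the Sullivan orientation comes from a ring map;
\item multiplicativity of $\kappa$, i.e.\ the product formula for $2^{-\lfloor n/2\rfloor}[D^{\rm sign}]$, including the factor $2$ in the odd$\times$odd case and the suspension trick for $\K^1$-classes.
\end{enumerate}

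The identity $\lambda(\Delta_\SO(D^s,S^{s-1}))=\sign_K(D^s,S^{s-1})$ follows directly from the definitions of $\varphi$ and $\kappa$, via the boundary version of Lemma \ref{lem.orientedpddeltafundjakobcommwphi}. Part 1's identification is not the right citation here. It concerns $\Phi$, which is not shown to coincide with $\kappa\circ\varphi^{-1}$.

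Drop the fallback, because it does not work. Two transformations of homology theories that agree on a point need not agree on finite complexes. The reduction to a point in the proof that $\kappa$ is an isomorphism goes through the five lemma, which only handles isomorphism statements, not equalities of maps. Also, $\K^0(B)$-linearity cannot recover general classes from the point: odd-degree classes for $B=S^1$ already escape it.
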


\begin{proof}
By Proposition \ref{prop.analyticbundletransfer} we can take as the right vertical homomorphism 
the composition appearing in \eqref{composition-fiber}. We can then break the diagram in \eqref{compatibility-fiber-b}
into the following 3 squares:
\begin{equation*}
 \xymatrix{
	\KO^\topo_* (B)[\smlhf] \ar[d]_{{\rm Susp}} \ar[r]^-{\lambda} &
	\KK^{*}(C(B),\cplx)[\smlhf] \ar[d]^{\tau_B (\alpha^s)\otimes -} \\
	\widetilde{KO}^\topo_{*+s} (S^s B^+)[\smlhf]  \ar[d]_{T(\pro_1)_*}\ar[r]_-\lambda &  
	\KK^{*+s} (C_0 (B\times \real^s),\cplx)[\smlhf] \ar[d]^{T(\pro_1)_*}\\
	\KO^\topo_{*+s} (D\nu_\theta,S\nu_\theta)[\smlhf] \ar[d]_{\Delta (\nu_\theta)\cap -}\ar[r]_-\lambda &  
	\KK^{*+s} (C_0 (\nu_\theta),\cplx)[\smlhf] \ar[d]^{\Sigma(\pi)^{-1}\otimes -}\\
	\KO^\topo_{*+d} (X)[\smlhf]\ar[r]_-\lambda & 
	\KK^{*+d} (C (X),\cplx)[\smlhf]
} 
\end{equation*}
Consider the top square; we have already remarked  that the vertical map on the right hand side is precisely the suspension 
isomorphism in analytic K-homology. 
The upper diagram then commutes since $\lambda$ is a transformation of homology theories.
The commutativity of the central square is clear. The commutativity of the bottom square has been already discussed in
a previous section, see \eqref{commutativity-immersions-main}.
\end{proof}


\begin{thebibliography}{ALMP18}

\bibitem[ABP25]{Part1}
Pierre Albin and Markus Banagl and Paolo Piazza.
\newblock Smooth atlas stratified spaces, K-Homology Orientations, and Gysin maps.
\newblock preprint, available online at {https://arxiv.org/abs/2505.14952}.

\bibitem[ABS64]{AtiBotSha:CM}
M.~F. Atiyah, R.~Bott, and A.~Shapiro.
\newblock Clifford modules.
\newblock {\em Topology}, 3(suppl):3--38, 1964.

\bibitem[Ban24]{Ban:BTLOCSS}
Markus Banagl.
\newblock Bundle Transfer of L-Homology Orientation Classes
  for Singular Spaces.
\newblock Algebraic \& Geometric Topology \textbf{24} (2024), 2579 -- 2618,
  DOI: 10.2140/agt.2024.24.2579

\bibitem[Ban25]{Ban:TSSKSS}
Markus Banagl.
\newblock Transfer and the Spectrum-Level Siegel-Sullivan KO-Orientation for
  Singular Spaces.
\newblock J. of Topology and Analysis \textbf{18} (2026), 371 -- 407, 
  DOI: \texttt{https://doi.org/10.1142/S1793525324500341}

\bibitem[BLM19]{BanLauMcC:LFCISNC}
Markus Banagl, Gerd Laures, and James~E. McClure.
\newblock The {$L$}-homology fundamental class for {IP}-spaces and the
  stratified {N}ovikov conjecture.
\newblock {\em Selecta Math. (N.S.)}, 25(1):Paper No. 7, 104, 2019.

\bibitem[BD82]{BauDou:HIT}
Paul Baum and Ronald~G. Douglas.
\newblock {$\K$}\ homology and index theory.
\newblock In {\em Operator algebras and applications, {P}art 1 ({K}ingston,
  {O}nt., 1980)}, volume~38 of {\em Proc. Sympos. Pure Math.}, pages 117--173.
  Amer. Math. Soc., Providence, RI, 1982.

\bibitem[BHS07]{baumhigsonschick} 
P. Baum, N. Higson, T. Schick,
\newblock On the Equivalence of Geometric and Analytic K-Homology,
\newblock Pure Appl. Math. Q., \textbf{3} (1, part 3) (2007), 1 -- 24.

\bibitem[BG75]{beckergottlieb} J. C. Becker, D. H. Gottlieb,
\newblock The Transfer Map and Fiber Bundles,
 \newblock Topology \textbf{14} (1975), 1 -- 12.

\bibitem[Bla86]{Bla:KOA}
Bruce Blackadar.
\newblock {\em {$\K$}-theory for operator algebras}, volume~5 of {\em
  Mathematical Sciences Research Institute Publications}.
\newblock Springer-Verlag, New York, 1986.

\bibitem[BOA66]{boardman} 
J. M. Boardman,
 \newblock {\em Stable Homotopy Theory},
  mimeographed notes, University of Warwick, 1965-66

\bibitem[BR72]{browder} 
W. Browder,
 \newblock {\em Surgery on Simply-Connected Manifolds},
\newblock Ergebnisse der Mathematik und ihrer Grenzgebiete, 2. Folge, vol. 65,
 Springer-Verlag Berlin Heidelberg, 1972

\bibitem[BRS76]{buonrs}
  S. Buoncristiano, C. P. Rourke, B. J. Sanderson,
  {\em A Geometric Approach to Homology Theory},
  London Math. Soc. Lecture Note Series, vol. 18,
  Cambridge Univ. Press, 1976.

\bibitem[CS81]{ConSka:TLPF}
A.~Connes and G.~Skandalis.
\newblock Th\'{e}or\`eme de l'indice pour les feuilletages.
\newblock {\em C. R. Acad. Sci. Paris S\'{e}r. I Math.}, 292(18):871--876,
  1981.

\bibitem[CS84]{ConSka:LITF}
A.~Connes and G.~Skandalis.
\newblock The longitudinal index theorem for foliations.
\newblock {\em Publ. Res. Inst. Math. Sci.}, 20(6):1139--1183, 1984.

\bibitem[GM80]{GorMac:IHTII}
Mark Goresky and Robert MacPherson.
\newblock Intersection homology theory II.
\newblock {\em Invent. Math.} 72 (1983), no. 1, 77–129.

\bibitem[GM88]{GorMac:SMT}
Mark Goresky and Robert MacPherson.
\newblock Stratified Morse theory
\newblock {\em Ergeb. Math. Grenzgeb.} (3), 14[Results in Mathematics and Related Areas (3)]
Springer-Verlag, Berlin, 1988, xiv+272 pp.

\bibitem[Hil89]{Hil:FKBPVL}
Michel Hilsum.
\newblock Fonctorialit\'{e} en {$\K$}-th\'{e}orie bivariante pour les
  vari\'{e}t\'{e}s lipschitziennes.
\newblock {\em $\K$-Theory}, 3(5):401--440, 1989.

\bibitem[Jak98]{Jak:BDH}
Martin Jakob.
\newblock A bordism-type description of homology.
\newblock {\em Manuscripta Math.}, 96(1):67--80, 1998.

\bibitem[Jak99]{Jak:cont}
Martin Jakob.
\newblock An alternative approach to homology.
\newblock Une d\'egustation topologique [Topological morsels]: homotopy
      theory in the Swiss Alps. Contemp. Math.,
      vol 265, 87--97, 1999
   
\bibitem[MM79]{madsenmilgram} I. Madsen, R. J. Milgram,
 {\em The Classifying Spaces for Surgery and Cobordism of Manifolds},
 Annals of Math. Studies \textbf{92}, Princeton University Press, 1979.

\bibitem[McC75]{mccrory} C. McCrory,
 {\em Cone Complexes and PL Transversality},
 Transactions of the American Math. Soc. \textbf{207} (1975),
 269 -- 291.  

\bibitem[MO57]{mostow} G. D. Mostow, 
\newblock Equivariant Embeddings in Euclidean Space, 
\newblock Annals of Mathematics, Second Series, \textbf{65} (1957), 432 -– 446.          
     
\bibitem[PA57]{palais} R. S. Palais, 
\newblock Imbedding of compact, differentiable transformation groups in 
    orthogonal representations, 
\newblock Journal of Mathematics and Mechanics, \textbf{6} (1957), 673 -– 678.      
 
\bibitem[Ros12]{rosenbergncgexampandapps} J. Rosenberg,
 {\em Examples and Applications of Noncommutative Geometry and K-Theory},
\newblock In {\em Topics in noncommutative geometry}, volume~16 of {\em Clay
  Math. Proc.}, pages 93--129. Amer. Math. Soc., Providence, RI, 2012.

\bibitem[RS82]{rourkesanderson} C. P. Rourke, B. J. Sanderson,
 {\em Introduction to Piecewise-Linear Topology},
 Springer Study Edition, Springer-Verlag, 1982.

\bibitem[Rud98]{Rud:TSOC}
Yuli~B. Rudyak.
\newblock {\em On {T}hom spectra, orientability, and cobordism}.
\newblock Springer Monographs in Mathematics. Springer-Verlag, Berlin, 1998.
\newblock With a foreword by Haynes Miller.

\bibitem[RW06]{RosWei:SO}
Jonathan Rosenberg and Shmuel Weinberger.
\newblock The signature operator at 2.
\newblock {\em Topology}, 45(1):47--63, 2006.

\bibitem[Sie83]{Sie:WSGCTKOP}
P.~H. Siegel.
\newblock Witt spaces: a geometric cycle theory for {$\K{\rm O}$}-homology at
  odd primes.
\newblock {\em Amer. J. Math.}, 105(5):1067--1105, 1983.

\bibitem[Sul71]{Sul:GTP}
Dennis Sullivan.
\newblock {\em Geometric topology. {P}art {I}}.
\newblock Massachusetts Institute of Technology, Cambridge, MA, 1971.
\newblock Localization, periodicity, and Galois symmetry, Revised version.

\end{thebibliography}
\end{document}